\documentclass[a4paper,11 pt]{amsart}

\usepackage[sc]{mathpazo}
\usepackage{eulervm}

\usepackage[utf8]{inputenc}

\usepackage[french]{}
\usepackage{newlfont}
\usepackage{amsmath, amscd}
\usepackage{amsthm}
\usepackage{amssymb}
\usepackage[a4paper,top=3.5cm,bottom=3.5cm,left=3.5cm,right=3.5cm,heightrounded]{geometry}
\usepackage[font=small,labelfont={bf}]{caption}
\usepackage{subfig}
\usepackage{caption}
\usepackage{graphicx}
\usepackage[T1]{fontenc}
\usepackage{listings}
\usepackage{setspace}
\usepackage{mathtools}
\usepackage{textcomp}
\usepackage{array}
\usepackage{multicol}
\usepackage[bbgreekl]{mathbbol}
\usepackage{bm}
\usepackage{braket}

\usepackage{chngcntr}
\counterwithin{equation}{section}

\newcommand{\N}{\mathcal{N}}
\newcommand{\prt}[1]{\frac{\partial}{\partial{#1}}}

\newcommand{\eqeu}{\mathrm{e}_\T}

\newcommand{\rk}{\mathrm{rk}}
\newcommand{\pt}{\mathrm{pt}}

\newcommand{\Z}{\mathbb{Z}}
\newcommand{\C}{\mathbb{C}}
\newcommand{\Q}{\mathbb{Q}}

\newcommand{\res}{\mathop{\text{Res}}}
\newcommand{\thres}{\mathop{\emph{Res}}}

\newcommand{\op}[1]{\mathop{\mathchoice{\mbox{\rm #1}}{\mbox{\rm #1}}
{\mbox{\rm \scriptsize #1}}{\mbox{\rm \tiny #1}}}\nolimits}

\newcommand{\M}{{\mathcal M}}
\newcommand{\GL}{{\mathrm G\mathrm L}}
\newcommand{\SL}{{\mathrm S \mathrm L}}
\newcommand{\PGL}{{\mathrm P \mathrm G \mathrm L}}
\newcommand{\Dol}{{\mathrm{Dol}}}
\newcommand{\B}{{\op{B}}}
\newcommand{\bE}{{\mathbb E}}

\DeclareMathOperator{\End}{End}
\DeclareMathOperator{\im}{im}
\newcommand{\T}{{{\mathbb T}}}
\newcommand{\calC}{{\mathcal C}}
\newcommand{\calN}{{\mathcal N}}
\newcommand{\calA}{{\mathcal A}}
\newcommand{\h}{{\mathrm h}}
\newcommand{\cM}{{\overline{\M}}}

\newcommand{\G}{{\mathrm G}}

\newcommand{\ch}{\operatorname{ch}}
\newcommand{\Sp}{\operatorname{Sp}}

\newcommand{\bes}{\begin{eqnarray*}}
	\newcommand{\ees}{\end{eqnarray*}}
\newcommand{\beq}{\begin{eqnarray}}
	\newcommand{\eeq}{\end{eqnarray}}

 \theoremstyle{plain}
 \newtheorem{theorem}{Theorem}[section]
\newtheorem{proposition}[theorem]{Proposition}
 \newtheorem{lemma}{Lemma}

 \newtheorem{conjecture}{Conjecture}

\newtheorem{corollary}[theorem]{Corollary}
\theoremstyle{definition}
 \newtheorem{definition}[theorem]{Definition}

 \newtheorem{remark}[theorem]{Remark}

\newcommand{\D}{\text{d}}

\title{An enumerative approach to $P=W$ }
\author{S. M. Chiarello, T. Hausel, A. Szenes}

\begin{document}
	
		\begin{abstract} The $P = W$ conjecture identifies the perverse filtration of the Hitchin system on the cohomology of the moduli space of Higgs bundles with the weight filtration of the corresponding character variety. In this paper, we introduce an enumerative approach to to this problem; our technique only uses the structure of the equivariant intersection numbers on the moduli space of Higgs bundles, and little information about the topology of the Hitchin map.  In the rank $2$ case, starting from the known intersection numbers of the moduli of stable bundles, we derive the equivariant intersection numbers on the Higgs moduli, and then verify the top perversity part of our enumerative $P = W$ statement for even tautological classes.  A key in this calculation is the existence of polynomial solutions to the Discrete Heat Equation satisfying particular vanishing properties. For odd classes, we derive a determinantal criterion for the enumerative $P = W$. 
	\end{abstract}
	
	\maketitle
	
	\section{introduction}

\subsection{Moduli spaces of Higgs bundles}\label{intro1}

Let $C$ be a smooth complex projective curve of genus $g\ge 2$; the canonical bundle of $C$ will be denoted by $K$.
A \textit{Higgs bundle} is a pair $(E,\Phi)$, where $E$ is a vector bundle on $C$, and $\Phi:E\to E\otimes K$ is a bundle map; the pair is \textit{stable} if for a proper subbundle $S\subset E$ satisfying $\Phi S\subset S\otimes K$, one has 
$\deg S/\rk S<\deg E/\rk E$.

As in \cite[1.2.2]{decataldo-hausel-migliorini}, we use the notation $\M_\Dol(\GL_2)$  for the moduli space of rank-$2$, degree-$1$ stable Higgs bundles\footnote{Note that all bundles in this paper will assumed to be of degree 1, and, for simplicity, the degree will be omitted from the notation.} on $C$. This is a smooth quasiprojective variety of dimension $8g-6$. 
 
We fix a degree-$1$ line bundle $\Lambda\in J_1(C)$, and denote by $\M_\Dol(\SL_2)$ the moduli space of rank-$2$ stable Higgs bundles with fixed determinant $\Lambda$, and trace-zero Higgs field:
   \[ \M_\Dol(\SL_2) = \set{(E,\Phi)\in \M_\Dol(\GL_2)|\,\det E\simeq \Lambda,\,\mathrm{Tr}\Phi=0}; \] 
this is a smooth quasiprojective variety of dimension $6g-6$. 
   
Finally, note that the finite group $\Gamma\coloneqq J_0(C)[2]$ of $2$-torsion points on the Jacobian acts by tensorization on $\M_\Dol(\SL_2)$: for $L\in\Gamma$	 and  $(E,\Phi)\in \M_\Dol(\SL_2)$, we let $L\cdot(E,\Phi)\mapsto(L\otimes E,\Phi)$. 
The quotient by the $\Gamma$-action is an orbifold 
$$\M_\Dol(\PGL_2)\coloneqq \M_\Dol(\SL_2)/\Gamma,$$
which is the the odd-degree component of the $\PGL_2$-Higgs moduli space. 
As this space will be the protagonist of our story, \textit{we will denote it simply by $\M$.}

There is a parallel set of moduli spaces of stable bundles on $C$ (without the Higgs field $\Phi$), which we will denote in a similar fashion, but replacing $\M_\Dol$ by $\N$. For example, $\N(\GL_2)$ is simply the (projective) moduli space of rank-2 stable bundles on $C$, whose dimension is $4g-3$,  which is exactly half of the dimension of $\M_\Dol(\GL_2)$. In fact, there is an embedding $T^*\N(\GL_2)\hookrightarrow\M_\Dol(\GL_2)$, under which the zero-section of $T^*\N(\GL_2)$ goes to the set of Higgs bundles with $\Phi=0$. A similar statement holds for the other two moduli spaces, as well. We will use the consistent notation $\N=\N(\PGL_2)$.

The relationship between the cohomologies of our Higgs moduli spaces is as follows: 
$$H^*(\M_\Dol(\GL_2))\cong H^*(\M_\Dol(\PGL_2))\otimes H^*(T^*J_0(C))$$ and $$H^*(\M_\Dol(\PGL_2))\cong H^*(\M_\Dol(\SL_2))^\Gamma.$$

Understanding these cohomology groups has been the focus of a large body of research. They have many applications, from their connection to mirror symmetry \cite{hausel-thaddeus3, groechenig-wyss-ziegler1}, to Ng\^{o}'s geometric fundamental lemma \cite{ngo,hausel-gths,groechenig-wyss-ziegler2} and knot invariants \cite{mellit}.

\subsection{Equivariant cohomology and generators}\label{sec:generators}

There is a rescaling $\C^\times$-action on all our Higgs moduli spaces:
\[ \lambda\cdot(E,\Phi)= (E,\lambda\Phi).\] For simplicity of notation, we will often use $\T\coloneqq \C^\times$. 

The equivariant cohomology $H_\T(\M)$ is a finitely generated module over the equivariant cohomology of a point $H_\T(\pt)$, which we will identify with the polynomial ring in a single variable $u$:
\[ H_\T(\pt) = H^*(B\T)\cong \C[u].  \]

All our Higgs moduli spaces are semi-projective with respect to the $\T$-action \cite{hausel-large}, and this, in particular, implies its formality: additively, we have a $H^*(B\T)\cong \C[u]$-module isomorphism \begin{eqnarray} \label{formal} H^*_\T(\M)\cong H^*(\M)\otimes H^*(B\T)= H^*(\M)[u].\end{eqnarray}

In \cite{hausel-thaddeus1},  a universal Higgs bundle endowed with a compatible $\T$-action over $\M\times C$ was constructed. While the rank-$2$ vector bundle $\bE$ is only unique up to tensoring with a line bundle on $\M$, the rank-$4$, $\T$-equivariant vector bundle  $\mathrm{End}(\bE)$ is unambiguously defined.    Now we fix an appropriate basis of $H^*(C)$: 
\begin{itemize}
	\item we denote by 1 the canonical generator of $H^0(C)$;
	\item we denote by $\omega$ the Poincar\'e dual of the class of a point in $H^2(C)$;
	\item finally, we choose elements $e_1,\dots,e_{2g}\in H^1(C)$, which form a symplectic basis of $H^1(C)$, i.e. for $i<j$, they satisfy $e_ie_j=\delta_{i+g-j,0}\cdot\omega$.
\end{itemize}

The K\"unneth decomposition of the second $\T$-equivariant Chern class of 
${\End }(\mathbb E)$ 
\begin{equation}
\label{defabps}
c_2({\End }({\mathbb E}))=2{\alpha} \otimes \omega + 4\sum_{i=1}^{2g} \psi_i \otimes e_i - \beta \otimes 1 \in H_\T^*(\M)\otimes H^*(C) 
\end{equation}
provides us\footnote{Note that the definition of the universal classes in \cite[(1.2.10)]{decataldo-hausel-migliorini} as well as in  \cite[5.1]{hausel-villegas} do not have the correct scalars. The correct ones are as in \eqref{defabps} and as in \cite[(1.5)]{hausel-thaddeus2}. This discrepancy in the scalars does not effect the arguments in \cite{decataldo-hausel-migliorini,hausel-villegas}.} with well-defined equivariant classes $\alpha\in H_\T^2(\M)$, $\psi_i\in H_\T^3(\M)$ and $\beta\in H_\T^4(\M)$. 
It is proved in \cite{hausel-thaddeus2} that $\alpha,\psi_i$ and $\beta$ generate the $\T$-equivariant cohomology ring $H^*_\T(\M)$ as an $H^*(B\T)$ algebra. Their images in ordinary cohomology, in other words, the K\"unneth components of the second non-equivariant Chern class of the vector bundle $\End(\bE)$, generate $H^*(\M)$. One can use this observation to give an explicit embedding $H^*(\M)\to H^*_\T(\M)$ yielding \eqref{formal}. For this reason, we will use the same notation $\alpha$,$\beta$ and $\psi_i$, for the K\"unneth components of the second non-equivariant Chern class of  $\End(\bE)$ as well.

\subsection{Character varieties, the nonabelian Hodge theorem and the weight filtration}

We define the {\em $\GL_2$-character variety} as the affine GIT quotient by the diagonal adjoint group action as follows:
\bes\set{ A_1,B_1,\dots, A_g,B_g \in \GL_2\ |\ 
A_1^{-1} B_1^{-1} A_1 B_1 \dots A_g^{-1} B_g^{-1} A_g B_g = - {\mathrm I} }/\! /\PGL_2.
\ees 
The result is a smooth affine variety of dimension $8g-6$ which we denote by $\M_\B(\GL_2)$ (the \textit{Betti moduli space}). We also define the {\em $\SL_2$-character variety}: 
\bes{{\M}}_\B(\SL_2) \coloneqq \set{ A_1,B_1,\dots, A_g,B_g \in {\SL}_2\ | \
A_1^{-1} B_1^{-1} A_1 B_1 \dots A_g^{-1} B_g^{-1} A_g B_g = - {\mathrm I} }/\!/{\mathrm PGL}_2, \ees which is again smooth, affine, and
 of dimension $6g-6$. 

Finally, denote by $ \bbmu_2\coloneqq \{\pm I\}$ the center of $\SL_2$. Then 
$\bbmu_2^{2g}$ acts on $\SL_2^{2g}$ by coordinate-wise multiplication. The quotient 
\begin{equation}\label{quotient}{\M}_\B(\PGL_2) \coloneqq {\M}_{\B}(\GL_2)/\!/ \GL_1^{2g}={\M}_\B(\SL_2)/\bbmu_2^{2g}\end{equation}
is the (odd component of the) {\em $\PGL_2$-character variety}. Again,
 $\M_\B(\PGL_2)$ is an affine orbifold of dimension $6g-6$. 

The cohomology of the varieties $H^*(\M_\B(\G))$ for all of our groups $G$ carries Deligne's weight filtration: $$W_0(H^*(\M_B(\G)))\subset W_1(H^*(\M_B(\G)))
\subset \dots \subset H^*(\M_B(\G))).$$

Recall from \cite[Definition 4.1.6]{hausel-villegas} (c.f. \cite[(1.2.6)]{decataldo-hausel-migliorini}) that we define 
a class $x\in H^i(\M)$ to have \textit{homogeneous weight} $k$ if $$x\in W_{2k}(H^i(\M,\C))\cap F^k(H^i(\M,\C)),$$ where $F$ denotes the Hodge filtration in the mixed Hodge structure of Deligne. This provides us with a new grading on $H^*(\M)$: indeed, the product of two classes of homogeneous weight $k_1$ and $k_2$ respectively will be a class of homogeneous weight $k_1+k_2$. 
It is shown in \cite[Proposition 4.1.8]{hausel-villegas} that the universal classes $\alpha,\psi_i,\beta$ all have homogeneous weight $2$.

 Let us  denote by $W^i_{k,k}(\M)\subset H^i(\M)$ the vector space of classes of homogeneous weight $k$ and degree $i$.   As $H^*(\M)$ is generated by universal classes, we have the decomposition $$H^i(\M)\cong \bigoplus_{k=1}^i W^i_{k,k}(\M),$$ which thus splits  the weight filtration (c.f. \cite[(1.2.5)]{decataldo-hausel-migliorini}) in the sense that \begin{equation}\label{defweight}W_{2k}(H^i(\M))\cong \bigoplus_{d=1}^k W^i_{d,d}(\M).\end{equation}

\subsection{The Hitchin map and the P=W conjecture}

The complex manifold  underlying the variety $\M$ was first constructed by Hitchin in \cite{hitchin} using gauge theory. Hitchin observed that the complex manifold $\M$ inherits a natural hyperk\"ahler metric from the gauge theory construction, and that in another complex structure of the hyperk\"ahler family that complex manifold is in fact the character variety we introduced above. This gives rise to the diffeomorphism \beq \label{nah} \M_\Dol(\G)\cong \M_\B(\G) \eeq for a reductive group $G$. These diffeomorphisms were reinterpreted by Simpson \cite{simpson} as the non-Abelian Hodge theorems.

Let us now recall the Hitchin map 
\begin{equation}\label{hitchinmap}
	\h \colon \M\to \calA \coloneqq H^0(C;K^2)
\end{equation}
defined by taking the determinant of the Higgs field $$\h(E,\Phi)=\det(\Phi)\in H^0(C;K^2).$$ It is a proper, completely integrable Hamiltonian system, in particular, the generic fibers are torsors for Abelian varieties. 

In what follows $\G$ will stand for one of $\GL_2,\SL_2$ or $\PGL_2$. As explained in \cite[\S 1,4]{decataldo-hausel-migliorini} the proper map $h$ induces the perverse filtration $$P_0(H^*(\M_\Dol(\G)))\subset P_1(H^*(\M_\Dol(\G)))
\subset \dots \subset H^*(\M_\Dol(\G))).$$

The main result of \cite{decataldo-hausel-migliorini} is
\begin{theorem}[$P=W$] \label{p=w} For $\G=\GL_2,\SL_2$ or $\PGL_2$ we have
	$$P_kH^*(\M_\Dol(\G))\cong W_{2k}(H^*(\M_B(\G))) \cong W_{2k+1}(H^*(\M_B(\G)))$$ induced by the non-Abelian Hodge theory diffeomorphisms in \eqref{nah}. 
	\end{theorem}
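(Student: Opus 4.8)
The plan is to reduce the statement to the single case $\G=\PGL_2$, that is, to $\M=\M_\Dol(\PGL_2)$, and then to combine two structural facts: that $H^*(\M)$ is generated as a ring by the universal classes $\alpha,\beta,\psi_i$ of \eqref{defabps} (as recalled above from \cite{hausel-thaddeus2}), and that \emph{both} the perverse filtration $P$ of the Hitchin map $\h$ and the weight filtration $W$ of $\M_\B(\PGL_2)$, transported to $H^*(\M)$ by \eqref{nah}, are multiplicative ring filtrations. For the reduction one uses the isomorphisms $H^*(\M_\Dol(\GL_2))\cong H^*(\M)\otimes H^*(T^*J_0(C))$ and $H^*(\M_\Dol(\PGL_2))\cong H^*(\M_\Dol(\SL_2))^\Gamma$, their Betti analogues, the compatibility of \eqref{nah} with the $\Gamma$-action, and the fact that for the abelian factor $T^*J_0(C)$ — an abelian Hitchin system with affine base — the two filtrations are matched directly (the class of $H^j$ sits in $P_j$ and in $W_{2j}$). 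Since the universal endomorphism bundle $\End(\bE)$, and hence the classes $\alpha,\beta,\psi_i$, is defined on both sides of \eqref{nah} and identified by it, the filtrations $P$ and $W$ may be regarded as living on the same graded ring.

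On the weight side, \cite{hausel-villegas} shows that $\alpha,\beta,\psi_i$ all have homogeneous weight $2$. Since homogeneous weight is additive under cup product and, by \eqref{defweight}, $W_{2k}H^i(\M)$ is exactly the subspace of homogeneous weight at most $k$, a degree-$m$ monomial in the generators has homogeneous weight $2m$; as these monomials span $H^*(\M)$, the space $W_{2k}H^i(\M)$ is precisely the span of the monomials of polynomial degree at most $k/2$. In particular every class has even homogeneous weight, hence even weight, which yields the isomorphism $W_{2k}\cong W_{2k+1}$. On the perverse side, the crucial geometric input is that every generator lies in $P_2$, i.e.\ $\alpha,\beta,\psi_i\in P_2H^*(\M)$. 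I would establish this by testing the K\"unneth components of $c_2(\End(\bE))$ from \eqref{defabps} against $P$ via the description of the perverse filtration in terms of restriction to generic linear sections of $\calA$: over the preimage $\h^{-1}(L)$ of a sufficiently generic linear subspace $L\subseteq\calA$ the universal bundle becomes spectral data on the corresponding family of spectral curves, so that $c_2(\End(\bE))$, and with it $\alpha,\beta,\psi_i$, is controlled through the cohomology of the (Prym) abelian-variety fibers; this pins down the perverse degrees. Granting multiplicativity of $P$, a degree-$m$ monomial then lies in $P_{2m}$, and comparison with the description of $W$ above gives the inclusion $W_{2k}\subseteq P_k$.

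For the reverse inclusion $P_k\subseteq W_{2k}$ I would match the perverse and the weight Poincar\'e polynomials. The mixed Hodge (weight) polynomial of $\M_\B(\PGL_2)$ is known from \cite{hausel-villegas}, while the perverse Poincar\'e polynomial of the rank-$2$ Hitchin map can be computed from the decomposition theorem — using Hitchin's description \cite{hitchin} of the generic fibers, the structure of the global nilpotent cone, and relative Hard Lefschetz — and the two polynomials agree; hence $\dim P_kH^i(\M)=\dim W_{2k}H^i(\M)$ for all $i,k$, which together with $W_{2k}\subseteq P_k$ forces equality. (Alternatively one promotes the inclusion to an equality using the symmetry of $P$ provided by relative Hard Lefschetz together with the ``curious Hard Lefschetz'' symmetry of $W$.) Finally, $W_{2k}\cong W_{2k+1}$ is the evenness of the weights noted above.

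The principal obstacle is the \textbf{multiplicativity of the perverse filtration $P$} for the Hitchin map: perverse filtrations of general proper maps need not be ring filtrations, so this is a genuine theorem about $\h$ and not a formality. The approach I would take rests on the description of $P_kH^n(\M)$ as the kernel of the restriction $H^n(\M)\to H^n(\h^{-1}(L))$ for a sufficiently generic linear section $L\subseteq\calA$ of the appropriate dimension: restriction maps are ring homomorphisms, so the task becomes matching the kernel and image descriptions of $P$ compatibly with cup product, which genuinely uses the geometry of the Hitchin system — transversality and smoothness of the generic sections $\h^{-1}(L)$, and relative Hard Lefschetz. A secondary difficulty, also demanding care, is the precise perverse degree of the odd classes $\psi_i$ and of the twisted classes distinguishing $\SL_2$ from $\PGL_2$, where the spectral-curve/abelianization argument must be carried out explicitly.
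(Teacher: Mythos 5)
You should first be aware that the paper does not prove Theorem \ref{p=w} at all: it is quoted as the main result of \cite{decataldo-hausel-migliorini}, whose proof the authors explicitly describe as requiring, besides the generators/relations of $H^*(\M_\Dol(\G))$ and the weight structure of \cite{hausel-villegas}, a detailed description of the cohomology of the \emph{singular} Hitchin fibers. What the present paper proves is only an equivalence: via the compactification $\cM$, Proposition \ref{pwlift} and the $(\eta,L)$-decomposition of \cite{decataldo-migliorini}, Theorem \ref{p=w} for $\PGL_2$ is reformulated as the enumerative statement \eqref{intersectionz} (Theorem \ref{enumerativepw}), which is then verified only in part. So your proposal cannot be "the same approach as the paper"; it has to stand on its own as a proof of $P=W$, and as such it has genuine gaps.

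The central gap is the multiplicativity of the perverse filtration, which you correctly flag but do not resolve: the flag/generic-section description of $P$ together with "restriction is a ring homomorphism" only shows that multiplying a class of $P_k$ by an arbitrary class keeps it in the same kernel, i.e.\ it gives no \emph{additivity} of perversities; perverse filtrations of proper maps are not ring filtrations in general, and for the Hitchin map no proof of multiplicativity independent of $P=W$ was available — it is essentially a corollary of the theorem, so taking it as input is circular. Second, the statement that the generators lie in $P_2$ (especially the odd classes $\psi_i$) is exactly where \cite{decataldo-hausel-migliorini} must analyze the singular fibers and the nilpotent cone; "the spectral/Prym description pins down the perverse degrees" is a placeholder for the hardest computation, not an argument. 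Third, your reverse inclusion by matching the perverse and weight Poincar\'e polynomials presupposes an independent computation of the perverse numbers of $\h$, which is again the hard content; the workable route is the one you mention only parenthetically and which is also the one invoked in this paper's proof of Theorem \ref{enumerativepw}: one inclusion between the filtrations plus relative Hard Lefschetz for $P$ and curious Hard Lefschetz for $W$ forces equality. Finally, your reduction "to the single case $\PGL_2$" does not yield the $\SL_2$ case: $H^*(\M_\Dol(\SL_2))$ contains the $\Gamma$-variant part, which is not generated by the universal classes, and $P=W$ there requires a separate argument in \cite{decataldo-hausel-migliorini}.
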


The proof in \cite{decataldo-hausel-migliorini} of Theorem~\ref{p=w} was complex: besides using
the knowledge of the cohomology of $\M_\Dol(\G)$ from \cite{hausel-thaddeus1}, \cite{hausel-thaddeus2} and \cite{hausel-compact} and the structure of the weight filtration on $H^*(\M_\B(\G))$ from \cite{hausel-villegas},
it also used a detailed description of the cohomology of singular Hitchin fibers. 

In this paper, we introduce a technique, which should yield an alternative proof of Theorem~\ref{p=w}, and only uses the information of equivariant intersection numbers of $\M_\Dol(\G)$. 

For higher rank Higgs bundles, recently, \cite{decataldo-maulik-shen} proved  $P=W$ 
 for genus $2$. Our approach, in principle, offers an alternative enumerative attack on the more general cases, but the computational aspects remain difficult. 
 
\subsection{Contents of the paper}

In Section $2$, we give the statement of the Enumerative P=W Theorem \ref{enumerativepw}, proving that it is equivalent to the classical one: Theorem \ref{p=w}. This theorem relies on the existence of a $\T$-equivariant compactification $i \colon \M\rightarrow \cM$ such that $$i^*P_kH^*(\cM) = P_kH^*(\M) $$ and on the explicit description of the perverse filtration on $\cM$ in \cite{decataldo-migliorini}.

In Section $3$ we compute the equivariant integrals on $\M$ in the sense of \cite{hausel-proudfoot}, obtaining a residue formula in Theorem \ref{equintm}. We also prove that the natural $\Sp(2g,\mathbb{Z})$-action on $H^*_\T(\M)$ preserves the perverse filtration. We then compute the intersection numbers on the infinity divisor $Z\subseteq \cM$, and provide a particularly simple residue formula for them in Proposition \ref{otherres}. We introduce the notion of defect of a class in $H^*(Z)$, closely related to the perversity, and we show in particular, that integrating a class of top defect amounts to computing the residue of a form with a simple pole. 

In Section $4$, we give a formula for the least-defect part of the lifts of the classes $\beta^k$ in Theorem \ref{topperv}, and we show that it is unique in Theorem \ref{uniqueness}. Moreover, in Proposition \ref{pervgrading}, we recover  the known fact that for the classes $\beta^k$, the perverse filtration is actually a grading, i.e. $\beta^k\notin P_{2k-1}H^{4k}(\M)$.

in Theorem \ref{ifinvertible}, we show  that for general classes of type $\beta^{k-h}\gamma^h$,  computing their perversities amounts to proving the non-vanishing of a particular polynomial determinant; we finally prove this nonvanishing in the case $h=1$ in Proposition \ref{h=1}.

 \paragraph{\bf Acknowledgements.} We would like to thank Mark de Cataldo, Camilla Felisetti and Luca Migliorini for useful discussions. The work of T.H. was partially supported by the Advanced Grant “Arithmetic and physics of Higgs moduli spaces” no. 320593 of the European Research Council. S.Ch. and A.Sz. gratefully acknowledge the support of the  Swiss National Science Foundation grants 17599 and 156645, as well as the NCCR SwissMAP.
  
 \section{The enumerative P=W conjecture}

\subsection{The compactification of $\M_\Dol(\G)$}

Recall the Bialynicki-Birula partition of the semiprojective variety $\M$. For $p \in \M^\T$ we define $$U_p\coloneqq \set{ x\in X | \lim_{\lambda\to 0}  \lambda x = p }$$ {\em upward flow} from $p$ and $$D_p\coloneqq \set{ x\in X | \lim_{\lambda\to \infty}  \lambda x = p}$$ {\em downward flow} from $p$.  Then $$\M=\cup_{p\in \M^\T} U_p$$ is the Bialinycki-Birula partition of $\M$ and $$\calC\coloneqq\cup_{p\in \M^\T} D_p$$ is the Bialinycki-Birula partition of $\calC\subset \M$, called the core of $\M$. Let $\M^\T=\cup F_{i}$ be the decomposition of the fixed point set into connected components; then 
$$U_i=\cup_{p\in F_i}U_p\subset \M$$
 and 
 $$D_i=\cup_{p\in F_i}D_p\subset \M$$ 
 are affine bundles over $F_i$.  

 A compactification $\cM\supset \M$ was constructed in \cite{hausel-compact}. The construction there was with symplectic cutting, producing a projective variety $\cM$. There is an algebraic version of this construction explained in \cite{hausel-large} for a general semiprojective variety. This yields the following in our case:  $\M$ comes with an ample line bundle $L\in Pic(\M)\cong \Z$ the generator of the Picard group. The $\T$-action can be linearized, and with an appropriate linearization we can construct the GIT quotient $$Z\coloneqq \M/\! /\T=(\M\setminus \cup_i D_i)/\T$$
 This is a projective orbifold of dimension $6g-7$. We can add it as divisor at infinity  to compactify $\M$ as follows. $$\cM\coloneqq (\M\times\C)/\!/ \T=(\M\times\C \setminus (\cup_i D_i)\times \{0\})/\T.$$ 
$\cM$ is a projective orbifold of dimension $6g-6$, which has the decomposition $\cM=\M\cup Z$. From the quotient construction, we 
have the Kirwan map \begin{equation}\label{Kirwanmap}
	\kappa_\cM:H^*_\T(\M)\cong H^*_\T(\M\times \C) \to H^*(\cM)
\end{equation}
 which is surjective \cite{kirwan}. Thus $H^*(\cM)$ is generated by $\kappa_\cM(\alpha), \kappa_\cM(\psi_i), \kappa_\cM(\beta)$ and $\kappa_\cM(u)$. By abuse of notation, we will denote these by $\alpha\in H^2(\cM))$, $\psi_i\in H^3(\cM))$, $\beta\in H^4(\cM))$, respectively, and the additional new class by $\eta=\kappa_\cM(u)\in H^2(\cM))$.

We also see that the Hitchin map \eqref{hitchinmap} is $\T$-equivariant when $\T$ acts on $\calA$ with weight 2, and thus we can extend the Hitchin map as follows \cite[\S]{hausel-compact}: $$\overline{\h}:\cM\to \overline{\calA}, \;\text{where }
\overline{\calA}\cong{\mathbb P}(\calA\times \C)$$  is a projective space of dimension $3g-3$. 

We have the \textit{perverse filtration} on $H^*(\cM)$
$$P_0(H^*(\cM))\subset P_1(H^*(\cM))
\subset \dots \subset P_{12g-12}(H^*(\cM)),$$ where $ P_p(H^*(\cM)) $ is defined by
$$\im\left(H^{*-\dim(\overline{\calA})}(\overline{\calA}, ^{\mathfrak p \! }\tau_{\leq p}( Rh_*\Q[\dim \overline{\calA}]))\to H^{*-\dim(\overline{\calA})}(\overline{\calA},Rh_*\Q[\dim \overline{\calA}]) \right).$$

Here  $^{\mathfrak p }\tau_{\leq p}$ is the perverse truncation functor (cf. \cite{decataldo-hausel-migliorini}).
\subsection{The enumerative version of P=W} Using the compactification $\cM$, we have another way to characterize the perverse filtration on $\M$.

\begin{proposition}\label{pwlift} Let $i:\M\to \cM$ be the natural embedding. Then $x\in P_k(H^j(\M))$ if and only if  there exists a $y \in P_{k-1}(H^{j-2}(\cM))$ such that $(x+\eta y) \eta^{3g-2-j+k}=0$.
\end{proposition}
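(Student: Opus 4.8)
The plan is to translate the statement into a concrete geometric fact about the projective morphism $\overline{\h}\colon\cM\to\overline{\calA}$ and the divisor $Z=\cM\setminus\M$. The key point is that the class $\eta=\kappa_\cM(u)\in H^2(\cM)$ is, up to a scalar, the pullback along $\overline{\h}$ of the hyperplane class on $\overline{\calA}$ (this is how the $\T$-action is tracked through the symplectic-cut/GIT construction of $\cM$ in \cite{hausel-compact, hausel-large}, and $u$ is the equivariant parameter which becomes the $\mathcal O(1)$ of the weighted projectivization in the fibered direction); moreover the restriction of $\eta$ to $Z$ is the $\T$-weight of the normal bundle, which is the relative-ample class on $Z$. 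First I would record the \emph{relative hard Lefschetz} package for the defining class $\eta$: cup product with $\eta$ shifts the perverse filtration by $2$, i.e. $\eta\cdot P_p(H^j(\cM))\subset P_{p+1}(H^{j+2}(\cM))$, and the iterated map $\eta^{\dim\overline{\calA}-j+p}\colon \mathrm{Gr}^P_p H^j(\cM)\to \mathrm{Gr}^P_{p}H^{?}(\cM)$ induced on the associated graded is an isomorphism — this is exactly the explicit description of the perverse filtration on $\cM$ from \cite{decataldo-migliorini} that Section~2 announces it will invoke, since for a morphism to projective space the perverse filtration is the one cut out by powers of the relatively ample class via the decomposition theorem (Deligne's $(1,-1)$ description). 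With $\dim\overline{\calA}=3g-3$, the critical exponent in the statement, $3g-2-j+k = (\dim\overline{\calA}) - (j-2) + (k-1) + 1$, is precisely ``one past'' the hard-Lefschetz exponent $(\dim\overline{\calA})-(j-2)+(k-1)$ attached to a class in degree $j-2$ and perversity $k-1$; so vanishing of $\eta^{3g-2-j+k}\cdot(\text{class in }H^j)$ is equivalent to that class lying in $P_k$.

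Concretely, the steps are as follows. (1) Fix $x\in H^j(\M)$ and choose, using surjectivity of $i^*$ on perverse graded pieces (which follows from $i^*P_kH^*(\cM)=P_kH^*(\M)$, the compatibility announced in Section~2), some lift $\tilde x\in H^j(\cM)$ of $x$. (2) Show: $x\in P_k(H^j(\M))$ $\iff$ there is a correction $\tilde x + \eta y'$, $y'\in H^{j-2}(\cM)$, lying in $P_k(H^j(\cM))$; the ``only if'' uses that $i^*$ is strict for $P$ and that $\ker(i^*)$ on $H^j$ is $\eta\cdot H^{j-2}(\cM)$ (the kernel of restriction to the open complement of the divisor $Z$ is generated by $Z$, and $[Z]$ is a nonzero multiple of $\eta$ since $Z$ is the fiber-at-infinity hyperplane; the ``if'' is immediate because $i^*\eta$ and $i^*\eta y'$ — wait, $i^*\eta$ need not vanish — rather one uses $i^*P_k(\cM)\subseteq P_k(\M)$ directly on $\tilde x+\eta y'$, whose image is $x$). (3) Refine the correction term: by relative hard Lefschetz on $\cM$, $\eta y'\in P_k$ can be arranged with $y'\in P_{k-1}(H^{j-2}(\cM))$, because $\eta$ raises perversity by exactly $1$ and one can reduce $y'$ modulo $\ker(\eta^{\text{large}})$, which by RHL is complementary to $P_{k-1}$ in the relevant degree — this is where the precise ``$P_{k-1}$ on $\cM$'' in the statement comes from. (4) Finally, for $w:=\tilde x+\eta y'\in H^j(\cM)$, apply the hard-Lefschetz criterion: $w\in P_k(H^j(\cM))$ $\iff$ $\eta^{\,3g-2-j+k}\,w=0$ in $H^*(\cM)$. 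The forward direction here is just that multiplying a perversity-$k$, degree-$j$ class by $\eta$ that many times lands in perversity $\le k + (3g-2-j+k)$ but in a cohomological degree whose entire perversity-$(\le)$ range has been exhausted — more precisely one lands in $P_k H^{2j-\cdots}$ which by the support/degree constraints on $Rh_*\Q$ is zero; the reverse direction is the injectivity half of RHL: if $w\notin P_k$ then its nonzero image in $\mathrm{Gr}^P_{k'}$ for some $k'>k$ survives under $\eta^{3g-2-j+k}$ since that exponent is $\le$ the hard-Lefschetz bound for perversity $k'>k$ in degree $j$. Then rewrite $w=x+\eta y$ after identifying $y'$ with $y$ (both denote the same class) to match the displayed equation.

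The main obstacle, and the place requiring genuine care rather than bookkeeping, is step (2)–(3): pinning down the exact kernel of $i^*\colon H^j(\cM)\to H^j(\M)$ and showing it is \emph{exactly} $\eta\cdot H^{j-2}(\cM)$ (not merely contained in the ideal generated by $[Z]$), and then checking that the relation $[Z]=c\,\eta$ holds with $c\neq 0$ — equivalently that $Z$ is cut out by the relatively ample class, which is the content of identifying $\eta$ with $\overline{\h}^*\mathcal O_{\overline{\calA}}(1)$ restricted appropriately plus a multiple of $[Z]$; one must be careful that on the nose $\eta$ and $[Z]$ may differ, and track the genuine relation from the GIT construction. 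The rest is an application of the decomposition theorem for $\overline{\h}$ together with relative hard Lefschetz for $\eta$, both of which are imported wholesale from \cite{decataldo-migliorini}; the arithmetic of the exponent $3g-2-j+k$ versus $\dim\overline{\calA}=3g-3$ is then a one-line check. I would also double-check the edge cases where $j-2<0$ or $k-1<0$ (then $y$ must be $0$ and the statement reduces to $x\eta^{3g-2-j+k}=0\iff x\in P_k$ with $k$ minimal), to make sure the proposition is stated and proved uniformly.
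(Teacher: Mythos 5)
Your overall route is the same as the paper's: restrict the perverse filtration from $\cM$ to $\M$, use the de Cataldo--Migliorini description of $P_\bullet H^*(\cM)$ through powers of $\eta$, and observe that the lifts of $x$ are exactly the classes $x+\eta y$. However, two of your key steps rest on claims that fail. In step (3) you propose to force $y\in P_{k-1}(H^{j-2}(\cM))$ by ``reducing $y'$ modulo $\ker(\eta^{\mathrm{large}})$, which by RHL is complementary to $P_{k-1}$''. This is backwards: by the very description you invoke (the paper's \eqref{decamiglio}, i.e.\ \cite[Proposition 5.2.4]{decataldo-migliorini} applied in degree $j-2$ and perversity $k-1$), the relevant kernel $\ker(\eta^{3g-1-j+k})\cap H^{j-2}(\cM)$ is the $l=0$ summand and hence is \emph{contained in} $P_{k-1}(H^{j-2}(\cM))$, not complementary to it; so modifying the correction term by kernel classes can never move it into $P_{k-1}$ if it was not there already, and your mechanism for producing the $P_{k-1}$ correction collapses. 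The paper gets the $P_{k-1}$ statement directly from \eqref{equal} and \eqref{decamiglio}: a class of $P_k(H^j(\cM))$ decomposes into summands, the $l\ge 1$ summands lie in $\im(\eta)$ (hence die under $i^*$) and are of the form $\eta z$ with $z$ in the corresponding $(l-1)$ summands of $P_{k-1}(H^{j-2}(\cM))$, while the $l=0$ summand is annihilated by $\eta^{3g-2-j+k}$.

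Second, the identification $\ker\bigl(i^*\colon H^j(\cM)\to H^j(\M)\bigr)=\eta\, H^{j-2}(\cM)$, which you rightly single out as the crux, cannot be obtained from the principle ``the kernel of restriction to the complement of a divisor is the ideal generated by the divisor class'': in general that kernel is the image of the Gysin map from $H^{j-2}(Z)$, which need not equal $\eta\cdot H^{j-2}(\cM)$ without further input. The paper sidesteps the geometry entirely: by Kirwan surjectivity \eqref{Kirwanmap} together with the splitting $H^*_\T(\M)\cong H^*(\M)[u]$ of \eqref{formal} and $\eta=\kappa_\cM(u)$, every class in $H^j(\cM)$ is $\kappa_\cM(x_0+u\,p)$ with $x_0\in H^j(\M)$ and $i^*$ returns $x_0$, so the lifts of $x$ are exactly $x+\eta y$ essentially by construction. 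Two further points: you assume $i^*P_kH^*(\cM)=P_kH^*(\M)$ as an announced fact, whereas proving it is part of the paper's proof (the inclusion because perverse truncation commutes with restriction to the open $\calA\subset\overline{\calA}$, the equality via the $(\eta,L)$-decomposition of \cite[Corollary 2.1.7]{decataldo-migliorini}); and your exponent bookkeeping is off by one, since $(\dim\overline{\calA})-(j-2)+(k-1)=3g-2-j+k$ already, without the extra ``$+1$''.
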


\begin{proof} 
The perverse truncation functor $^{\mathfrak p }\tau_{\leq p}$ commutes with the  restriction to the open $\calA\subset \overline{\calA}$, and thus it follows that the embedding $i:\M\to \cM$ induces the inclusion
   \beq \label{contains}i^*(P_k(H^*(\cM))\subset P_k(H^*(\M)).\eeq

On the other hand, from the $(\eta,L)$-decomposition\footnote{Note that we use the notation $\eta=\eta_Z$ for the class $L$ in \cite{decataldo-migliorini} while $\eta$ in  \cite{decataldo-migliorini} denotes an ample class on $\cM$.} of
\cite[Corollary 2.1.7]{decataldo-migliorini}, we see that $i^*(P_k(H^*(\cM))$ induces a filtration on $i^*(H^*(\cM))\cong H^*(\M)$ satisfying a relative Hard Lefschetz theorem of the same type as the perverse filtration $P$ on $H^*(\M)$. It follows from this and  \eqref{contains}  that actually \beq \label{equal}i^*(P_k(H^*(\cM)))= P_k(H^*(\M)).\eeq 

On the projective $\cM$ we can use \cite[Proposition 5.2.4.(39)]{decataldo-migliorini} to deduce\footnote{Proposition 5.2.4 is claimed for a smooth total space, but as explained in \cite[Theorem 2.3.1]{decataldo-migliorini} the results hold for non-smooth varieties for intersection cohomology, and thus for orbifolds with ordinary cohomology.} that \beq  \label{decamiglio} P_k(H^j(\cM))=\sum_{l=0}^{3g-2-j+k} \im(\eta^l)\cap \ker(\eta^{3g-2-l-j+k}),\eeq where $\eta:H^{j}(\cM)\to H^{j+2}(\cM)$ denotes multiplication by $\eta\in H^2(\cM)$ by an abuse of notation.  
 
Now if $0\neq x\in H^j(\M)$ and $\widetilde{x}\in H^j(\cM)$ is a lift of $x$, then $\widetilde{x}\notin\im(\eta)$, since $i^*(\eta)=0$. It follows then from \eqref{equal} and \eqref{decamiglio} that 
  $$x\in P_k(H^j(\cM)) \Leftrightarrow \eta^{3g-2-j+k}\widetilde{x} = 0 \text{ for some } \widetilde{x}\in H^j(\cM) \text{ lift of } x.$$ 
  
Considering the composition $H^*(\M)\to H^*_\T(\M)$ (cf. \S\ref{sec:generators}) with the Kirwan map \eqref{Kirwanmap}, it is clear that 
 any lift of $x$ will have the form $\widetilde{x} = x+\eta y$ for some $y\in H^{j-2}(\cM)$. Finally, again by \eqref{equal} and \eqref{decamiglio}, we see that $x\in P_k(H^j(\M))$ if and only if such $y\in P_{k-1}(H^{j-2}(\cM))$, and this completes the proof.
\end{proof}
 
 After these preparations,  we are ready to formulate the our enumerative version of P=W.
 
 \begin{theorem}[Enumerative P=W] \label{enumerativepw} The P=W Theorem \ref{p=w} holds for $G=PGL_2$ if and only if for all $g\ge 2$, $l\geq 0$ and $m\geq 0$ there is an  extension $\beta^l\gamma^m+\eta q(\alpha,\beta,\gamma,\eta)$ of $\beta^l\gamma^m\in H^*(Z)$ such that \beq \label{intersectionz}\int_Z \eta^{3g-3-2l-2m}(\beta^l\gamma^m+\eta q(\alpha,\beta,\gamma,u))x=0\eeq for all $x\in H^*(Z)$. 
\end{theorem}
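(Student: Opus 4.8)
The plan is to reduce the Enumerative P=W statement to the characterization of the perverse filtration on $\M$ given in Proposition~\ref{pwlift}, and then to translate the "$\eta$-annihilation" conditions on $H^*(\cM)$ into intersection-theoretic conditions on the infinity divisor $Z$. The starting point is that, by Theorem~\ref{p=w} combined with the splitting \eqref{defweight} of the weight filtration, $P=W$ for $\PGL_2$ is equivalent to the statement that $W^{*}_{k,k}(\M) = \mathrm{gr}^P_k H^*(\M)$, i.e. every class of homogeneous weight exactly $k$ lies in $P_k$ but in no smaller piece; and since $H^*(\M)$ is generated by $\alpha,\beta,\psi_i$ of weight $2$, it suffices to check this on the monomial basis, which (after accounting for the $\Sp(2g,\Z)$-action preserving $P$, proved in Section~3) reduces to monomials $\beta^l\gamma^m$ in the "even" generators $\alpha,\beta,\gamma$ — here $\gamma$ is the standard symplectic-invariant combination $\gamma = -2\sum_{i} \psi_i\psi_{i+g}$ or similar, and the role of $\alpha$ is absorbed by the Hard Lefschetz action of $\alpha$ on the fibers.

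First I would spell out, using Proposition~\ref{pwlift}, what it means for a class $z\in H^j(\M)$ of weight $k$ to be "correctly placed": $z\in P_k$ requires a lift $z+\eta y$ (with $y\in P_{k-1}$) such that $(z+\eta y)\eta^{3g-2-j+k}=0$ in $H^*(\cM)$, while $z\notin P_{k-1}$ is the sharpness. For $z=\beta^l\gamma^m$ we have $j=4l+4m$, and the weight is $k=2l+2m$ (since $\beta,\gamma$ have weight $2$ and degree $4$), so $3g-2-j+k = 3g-2-2l-2m$; thus the required identity is $(\beta^l\gamma^m+\eta q)\,\eta^{3g-2-2l-2m}=0$ in $H^*(\cM)$. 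Now I would use the decomposition $\cM=\M\cup Z$ together with the Gysin/restriction sequence: multiplication by $\eta$ on $H^*(\cM)$ is, up to the standard shift, the composition of restriction $H^*(\cM)\to H^{*}(Z)$ with the Gysin pushforward $H^{*}(Z)\to H^{*+2}(\cM)$, because $\eta=\eta_Z$ is the class of the divisor $Z$ (this is where the footnote clarifying $\eta=\eta_Z$ versus de Cataldo–Migliorini's ample $\eta$ matters). Consequently an identity of the form $\xi\cdot\eta^{N}=0$ in $H^*(\cM)$, for $\xi$ supported suitably, is equivalent to the vanishing of all integrals $\int_\cM \xi\,\eta^{N}\,w = \int_Z (\xi|_Z)\,\eta^{N-1}\,(w|_Z)$ for all complementary classes $w$; since the classes $\alpha,\beta,\gamma,\eta$ generate $H^*(\cM)$ and restrict to generators of $H^*(Z)$, testing against all $w$ is the same as testing against all $x\in H^*(Z)$. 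This is exactly how the integral \eqref{intersectionz} arises, with $q(\alpha,\beta,\gamma,u)$ being the restriction to $Z$ of the correction term $y$ from Proposition~\ref{pwlift}.

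The remaining point is the bookkeeping of which direction is which. For the "only if" direction, assuming $P=W$, the class $\beta^l\gamma^m$ — being of pure weight $2l+2m$ — lies in $P_{2l+2m}H^{4l+4m}(\M)$, so Proposition~\ref{pwlift} furnishes the needed $y\in P_{2l+2m-1}(H^{4l+4m-2}(\cM))$ with $(\beta^l\gamma^m+\eta y)\eta^{3g-2-2l-2m}=0$; restricting $y$ to $Z$ and writing it as a polynomial $q$ in the generators (possible by surjectivity of $H^*(\cM)\to H^*(Z)$, itself a consequence of the Kirwan surjectivity and the structure of $Z$ as a GIT quotient), the pushforward identity gives \eqref{intersectionz}. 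Conversely, given extensions $q$ satisfying \eqref{intersectionz} for all $l,m$, one runs the same computation backwards: \eqref{intersectionz} says the class $(\beta^l\gamma^m+\eta q)\eta^{3g-3-2l-2m}$ pushed forward from $Z$ pairs to zero with everything, hence vanishes in $H^{*}(\cM)$ by Poincaré duality on the orbifold $\cM$; unwinding the Gysin description of $\eta$-multiplication turns this into $(\widetilde{\beta^l\gamma^m}+\eta\widetilde q)\eta^{3g-2-2l-2m}=0$ in $H^*(\cM)$ for a lift, which by Proposition~\ref{pwlift} places $\beta^l\gamma^m$ in $P_{2l+2m}$ — and one checks sharpness, i.e. $\beta^l\gamma^m\notin P_{2l+2m-1}$, separately, using that its weight is genuinely $2l+2m$ (this non-collapsing is the content of the inclusion \eqref{contains} being an equality plus the fact that a weight-$k$ class cannot sit in $P_{k-1}$ without violating \eqref{equal}). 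Since monomials in $\beta,\gamma$ (together with the $\Sp(2g,\Z)$-orbit and the Lefschetz $\alpha$-action) span the even part of $H^*(\M)$, and the odd part requires the analogous statement with $\psi_i$'s which the authors handle via a determinantal criterion elsewhere, the equivalence follows on the even part, which is what the theorem claims to reduce to.

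\textbf{Main obstacle.} The hard part will be the precise translation of the $\eta$-multiplication relations on $H^*(\cM)$ into clean intersection numbers on $Z$ — i.e. justifying that testing $\xi\eta^N=0$ against all of $H^*(\cM)$ is equivalent to \eqref{intersectionz} with $x$ ranging over $H^*(Z)$ only — and, relatedly, controlling the $P_{k-1}$-membership of the correction term $y$ on $Z$ (Proposition~\ref{pwlift} demands $y\in P_{k-1}(H^{j-2}(\cM))$, and one must check this constraint is automatically satisfied, or can be arranged, once $q$ is chosen as a polynomial in $\alpha,\beta,\gamma,\eta$ of the right weight). Both of these hinge on the $(\eta,L)$-decomposition of de Cataldo–Migliorini \cite[Cor.~2.1.7]{decataldo-migliorini} and on orbifold Poincaré duality for $\cM$ and $Z$, so the care is in invoking those correctly for orbifolds rather than in any new computation.
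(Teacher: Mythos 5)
Your skeleton matches the paper's: Proposition \ref{pwlift}, the identity $\int_\cM \eta v=\int_Z v$, surjectivity of the restriction $H^*(\cM)\to H^*(Z)$, and Poincar\'e duality on the orbifold $\cM$ are exactly the ingredients used there. But two steps of your plan have genuine gaps. First, in the direction ``$P=W$ $\Rightarrow$ enumerative statement'' you must produce a correction term $q$ that is a polynomial in $\alpha,\beta,\gamma,\eta$ \emph{only}, as the statement demands; surjectivity of $H^*(\cM)\to H^*(Z)$ only lets you write a lift in terms of all the generators, which include the classes $\psi_i$. The paper obtains the required form by invoking Proposition \ref{rmksigma}: since the perverse filtration is $\Sp(2g,\Z)$-invariant, the lift can be chosen $\Sp(2g,\Z)$-invariant, and the invariant subring is generated by $\alpha,\beta,\gamma,\eta$. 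You mention the $\Sp(2g,\Z)$-action only when reducing which monomials to test, not at this point, where it is actually indispensable.

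Second, in the direction ``enumerative statement $\Rightarrow$ $P=W$'' your endgame does not work as described. You propose to verify sharpness, $\beta^l\gamma^m\notin P_{2l+2m-1}$, ``using that a weight-$k$ class cannot sit in $P_{k-1}$ without violating \eqref{equal}''; but \eqref{equal} (compatibility of the filtration with restriction) provides no such obstruction, and sharpness of individual monomials is genuinely nontrivial --- compare Proposition \ref{pervgrading}, which is proved separately and only for $\beta^k$. The paper instead deduces from the vanishing \eqref{vanishing} that every invariant monomial $\alpha^k\beta^l\gamma^m$ lies in the expected perverse piece, hence one containment between the two filtrations via \eqref{defweight} and Proposition \ref{rmksigma}, and then upgrades this single containment to the full equality on all of $H^*(\M)$ (odd classes included) exactly as in \cite{decataldo-hausel-migliorini}, using the relative Hard Lefschetz theorem of \cite{decataldo-migliorini} together with the curious Hard Lefschetz theorem of \cite{hausel-villegas}; your proposal never invokes curious Hard Lefschetz, and your closing claim that the equivalence is only obtained ``on the even part'' falls short of the stated if-and-only-if. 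Finally, a bookkeeping slip: $\gamma$ has degree $6$ and homogeneous weight $4$ (not degree $4$ and weight $2$), so the relevant perversity for $\beta^l\gamma^m$ is $2l+4m$; your exponent $3g-2-2l-2m$ happens to be correct only because degree minus weight equals $2l+2m$ under either convention.
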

 
\begin{proof} 

Notice that for any class $v\in H^*(\cM)$, we have $\int_\cM v \eta = \int_Z v$, and the restriction map $H^*(\cM)\to H^*(Z)$ is surjective as both rings are generated by the universal classes and $\eta$. Therefore, Equation \eqref{intersectionz} is equivalent to 
$$\int_{\cM}  \eta^{3g-2-2l-2m}(\beta^l\gamma^m+\eta q(\alpha,\beta,\gamma,\eta))x=0$$ 
for all $x\in H^*(\cM)$. By Poincaré duality, this is, in turn, equivalent  to \begin{eqnarray} \label{vanishing} \eta^{3g-2-2l-2m}(\beta^l\gamma^m+\eta q(\alpha,\beta,\gamma,\eta))=0. \end{eqnarray}
By Proposition \ref{pwlift},  \eqref{vanishing} implies that $\alpha^k\beta^l\gamma^m\in P_{2k+2l+4m}(H^{2k+4l+6m}(\mathcal{M}))$ for all $k$, $l$ and $m$. Since $\alpha$, $\beta$ and $\gamma$ have weight respectively $2$, $2$ and $4$, Proposition \ref{rmksigma} and \eqref{defweight} imply that $$P_kH^*(\mathcal{M}) \subseteq W_{2k}(\mathcal{M}) $$ for all $k\ge 0$. Finally, as in \cite{decataldo-hausel-migliorini}, this, the relative Hard Lefschetz theorem \cite[Theorem 2.1.4]{decataldo-migliorini} and the curious Hard Lefschetz theorem \cite[Theorem 1.1.5]{hausel-villegas} imply Theorem~\ref{p=w} for $\G=\PGL_2$. 

Conversely, assuming Theorem \ref{p=w} holds, then for all $l\ge 0$ and $m\ge 0$, we have $$ \beta^l\gamma^m\in P_{2l+4m}(H^{4l+6m}(\M)).$$ Still by Proposition \ref{pwlift} this implies that there exists a lift $\widetilde{y} \in H^{2l+4m}(\cM)$ of $\beta^l\gamma^m$ such that $\widetilde{y}\eta^{3g-2-2l-2m} = 0$, and by Proposition \ref{rmksigma}, we can choose such a lift to be $\Sp(2g,\mathbb{Z})$ invariant; therefore, it is of the form $\widetilde{y}=\beta^l\gamma^m+\eta q(\alpha,\beta,\gamma,\eta)$ for some polynomial $q$. Then, by Poincaré duality on $\cM$ we have  $$\int_{\cM}\eta^{3g-2-2l-2m}(\beta^l\gamma^m+\eta q(\alpha,\beta,\gamma,\eta))x = 0 $$ for all $x\in H^*(\cM)$. Finally, since $\int_{\cM}\eta v = \int_Z v$ for all $v$ the proof is complete.
\end{proof}

\begin{remark} There is a straightforward generalization of the result above for $PGL_n$. We have universal generators for $H^*(\M^d_\Dol(\PGL_n))$, whose weights on the character variety side have been computed in \cite{hausel-villegas,shende} and the curious Hard Lefschetz theorem in $H^*(\M^d_\B(\PGL_n))$ has been proved in \cite{mellit}. 
	\end{remark}

\section{Equivariant integration on $\M$}

\subsection{Kalkman's formula}
Let $X$ be a semiprojective $\T$-manifold. The (formal)  \textit{equivariant integral} of a class $x\in H_\T^*(X)$ is then defined \cite{hausel-proudfoot} as a sum over the fixed-point components
\beq\label{equint}\oint_X x \coloneqq  \sum_{F\in \pi_0(X^\T)} \int_F \frac{x|_{F}}{e_\T(N_{F})}  \in \C[u,u^{-1}]\eeq where $e_\T(N_{F})$ is the equivariant Euler class of the normal bundle to $F\subseteq X$.
The equivariant integral thus is a  functional of degree $-2\dim X$ on $H_\T^*(X)$, taking values in Laurent polynomials in $u$. Sometimes we will allow the class $x$ to be in a completion of $H_\T^*(X)$, and in this case, the values will be in Laurent series in $u$.  When $X$ is compact, both sides of \eqref{equint} are well defined and according to \cite{BV,AB}, \eqref{equint} is an actual equality.

This operation appears in our calculations via \textit{Kalkman's residue formula} \cite{kalkman,lerman,edidin-graham}, which, in our context gives the following for the intersection numbers on $Z$:
\beq\label{kalkman} \int_Z \kappa_Z(x) =- \res_{u=0} \oint_\M x\,du, \eeq
where  $d_Z=\dim Z=6g-7$ and class $x\in H_\T^{2d_Z}(\M)$.

It is clear then that to approach \eqref{intersectionz}, we need a formula for the equivariant  integrals on $\M$, which is easy to manipulate. First we introduce a new notation for the cohomology classes on $\M$.

\subsection{Witten's notation for the cohomology classes of $\M$ and $\N$}
\label{wittennot}
We generalize slightly the construction of tautological classes in \S\ref{sec:generators}.

We recall that we defined in \eqref{defabps} universal classes as K\"unneth components of the second Chern class of the bundle $\text{End}(\bE))$. More generally,  we let the permutation group $S_2$ act on $\C[y]$ by $y\mapsto -y$ and take  $P\in \C[y]^{S_2}$, an even polynomial. Then there is a polynomial $\tilde{P}$  such that $P(y)=\tilde{P}(-y^2)$. We form the class $$\tilde{P}(\text{c}_2(\text{End}(\bE)))
\in H^*_\T(\M\times C),$$ and we introduce the following notation for its K\"unneth components:
$$\tilde{P}(\text{c}_2(\text{End}(\bE)))=P_{(2)} \otimes \omega + \sum_{i=1}^{2g} P_{(e_i)} \otimes e_i +P_{(0)} \otimes 1 \in H_\T^*(\M)\otimes H^*(C); $$ this defines a family of equivariant cohomology classes $P_{(2)}, P_{(e_i)}, P_{(0)}\in  H_\T^*(\M)$. 	
Note that, because of the compatibility of the universal bundle with the $\T$-action and the restriction to $\N$, this formalism allows us to define classes in $H^*(\M)$ and $H^*(\N)$. In particular, in which cohomology group lies e.g. $P_{(2)}$ will be determined by the context.   (This notation was introduced by Witten in \cite{Wi} for classes in $H^*(\N)$.)

It is easy to see that 
\begin{equation}\label{defclass}   [-y^2/2]_{(2)}=\alpha,\;  [y^2]_{(0)} = \beta ,\;[-y^2/4]_{(e_i)}= \psi_i,\;  [-y^4/4]_{(2)}=\alpha\beta+4\gamma,
\end{equation} where 
\begin{equation}\label{gammadef}
	\gamma = -2\sum_{i=1}^g\psi_i\psi_{i+g}.
\end{equation}
To show this, note that, for polynomials $P,Q\in \C[y]^{S_2}$, we have 
\[
[PQ]_{(2)} = P_{(2)}Q_{(0)} + P_{(0)}Q_{(2)}  
+  \sum_{i=1}^{g} \left(P_{(e_i)}Q_ {(e_{i+g})}- P_{(e_{i+g})}Q_ {(e_{i})}\right).                                                                                                                                                                 \] Then  we obtain \eqref{gammadef} when setting $P = y^2/2$ and ${Q=-y^2/2}$ in this formula.

\begin{remark} \label{rmktqall}
	In general, for any polynomial $P\in \mathbb{C}[y^2]$, we can write the contraction $P_{(2)}$ in terms of classes $\alpha$, $\beta$ and $\gamma$ in the following way:
	\begin{equation}\label{contraction2}
	P_{(2)} =    \frac{2\gamma -\alpha\beta}{\beta\sqrt{\beta}} P'(\sqrt{\beta})-\frac{2\gamma}{\beta}  P''(\sqrt{\beta}). 
	\end{equation}
	
	To show this, set $t = y^2$. Then by the Leibnitz rule we have 
	\[
	(t^k)'' = k(k-1) t^{k-2}(t')^2 + kt^{k-1}t''.
	\]
	Now by substituting $(t')^2\rightarrow -8\gamma$, $t'' \mapsto -2\alpha$, 
	$t\rightarrow \beta$ we deduce that if $Q(t) = P(\sqrt{t})$, then 
	\[
	P_{(2)} = -2\alpha Q'(\beta) -8\gamma Q''(\beta),
	\]
	where the derivatives of $Q$ are taken with respect to the variable $t$. The formula follows from the identities
	\[
	\frac{\D}{\D t} = \frac{1}{2y}\frac{\D}{\D y}, \ \ \ \frac{\D^2}{\D t^2} = \frac{1}{4y^2}\left(\frac{\D^2}{\D y^2} - \frac{1}{y}\frac{\D}{\D y}\right).
	\]

	This, in particular, implies that all intersection numbers involving the classes $\alpha$, $\beta$ and $\gamma$
	are encoded in the numbers of the form $\int_{\N}T_{(0)}\exp(Q_{(2)}).$ Indeed, we can choose for example $Q = -A y^2/2 - G y^4/4$, getting any class by subsequent derivatives with respect to the formal variables $A$ and $G$. 
	In rank $2$, this in particular gives all intersection numbers for the part of the cohomology which is invariant under the action of the symplectic group induced by the one on $H_*(C)$: see Proposition \ref{rmksigma} below.
	
\end{remark}

Using this notation, we can present a rather general integration formula on $\N$.
Such formulas were first presented by Thaddeus \cite{thaddeus1} (cf. \cite{Wi,JK,zagier,Sz}). We will provide a proof which uses Zagier's formalism \cite{zagier}.

\begin{theorem} Let $T \in \C[y^2]$ and $P\in y^2\C[y^2]$. Then
	\begin{equation}\label{stabint}
	\int_{\N} T_{(0)}\exp(P_{(2)}) =
	\thres_{y=0} \frac{2^{-1}T(y)P''(y)^g}{
		y^{2g-2}\left[\exp(P'(y))-\exp(-P'(y))\right] }
	\end{equation}
\end{theorem}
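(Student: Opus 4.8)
The plan is to recognize the right-hand side as the generating-function repackaging of Thaddeus's intersection numbers on the fixed-determinant moduli space of stable bundles, and to prove the identity by running Zagier's solution of the Thaddeus recursion. First I would reduce: after expanding the exponential, both sides are $\Q$-linear in the coefficients of $T$ and in those of the powers of $P$, so it is enough to treat $T$ and $P$ as monomials; the asserted formula then becomes a countable family of equalities of rational numbers, each saying that a single classical intersection number equals the coefficient of $y^{-1}$ in an explicit rational function. By Remark~\ref{rmktqall}, $T_{(0)}\in\Q[\beta]$ and $P_{(2)}\in\Q[\alpha,\beta,\gamma]$, so the formula computes exactly the numbers $\int_{\N}\alpha^a\beta^b\gamma^c$, i.e.\ the $\Sp(2g,\Z)$-invariant part of the intersection theory of $\N$. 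Finally, since $\End(\bE)$ -- and hence $\alpha,\beta,\gamma$ -- is $\Gamma$-invariant and descends, one may equivalently compute on $\N(\SL_2)$ and divide by $|\Gamma|=2^{2g}$; this, together with the $2$-torsion automorphisms, accounts for the powers of two in the statement.

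The engine of the proof is Thaddeus's recursion in the genus, relating the intersection numbers on $\N=\N_g$ to those on $\N_{g-1}$. Geometrically it comes from comparing $\N_g$ with Thaddeus's moduli spaces of stable pairs $(E,s)$, $s\in H^0(E)$, and following the finite chain of flips connecting the stability chambers: across each wall a $\mathbb{P}^r$-bundle locus is blown up and blown down onto another one, and the change in any tautological integral is computed by the projection formula over those projective bundles and over the relevant Jacobians. Following Zagier, one recasts this as a functional identity for the generating series
\[
 Z_g(T,P)\ :=\ \int_{\N_g}T_{(0)}\exp(P_{(2)}),
\]
expressing $Z_g$ through $Z_{g-1}$ together with explicit boundary contributions; the step $g\mapsto g-1$ is responsible for the drop $y^{2g-2}\to y^{2g-4}$ and $P''(y)^g\to P''(y)^{g-1}$ in the answer, while the factor $1/\bigl(\exp(P'(y))-\exp(-P'(y))\bigr)=1/(2\sinh P'(y))$, expanded as a geometric-type series, reproduces the $\mathbb{P}^r$-bundle contributions of the flips.

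One then sets
\[
 R_g(T,P)\ :=\ \thres_{y=0}\frac{2^{-1}\,T(y)\,P''(y)^g}{y^{2g-2}\bigl(\exp(P'(y))-\exp(-P'(y))\bigr)}
\]
and checks, by a direct residue manipulation, that $R_g$ satisfies the same functional identity as $Z_g$: the passage $g\mapsto g-1$ on the $Z$-side corresponds to multiplying the integrand of $R_g$ by $P''(y)/y^2$, and the boundary terms are reproduced by the Laurent expansion of $1/(2\sinh P'(y))$ at $y=0$. It then remains to verify the identity in the smallest genus, which for $g=2$ is a direct check against the finitely many known intersection numbers on $\N_2$ (this also fixes the overall normalization $2^{-1}$); induction on $g$ completes the proof. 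An alternative input to Zagier's formalism is the Harder--Narasimhan/Atiyah--Bott recursion for the equivariant cohomology of the space of holomorphic structures on a fixed $C^\infty$ bundle: the stable stratum contributes $H^*(\N_g)$, the unstable strata are indexed by the degree $\ell$ of the destabilizing sub-line-bundle and their contributions are computed by Gysin maps over Jacobians, and the alternating sum over $\ell$, once the Jacobian integrals are carried out, assembles directly into $\thres_{y=0}$ of the stated function; this route makes the residue shape of the answer transparent, at the cost of the same combinatorial reconciliation.

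I expect the main obstacle to be purely bookkeeping: matching all numerical constants, signs, and the precise normalizations -- the factor $2^{-1}$, the exponent $g$ on $P''$, the power $y^{2g-2}$ -- between the geometric recursion and the residue identity, and in particular translating the contribution of the $2g$ odd classes $\psi_i$ (equivalently, the powers of $\gamma$, equivalently the $g$-th power of $P''$ coming from the $g$ handles) into the stated analytic factor. A secondary technical point is that $\N=\N(\PGL_2)$ is only an orbifold, so the reduction to $\N(\SL_2)$ (or a fully orbifold-equivariant treatment throughout) must be justified via the $\Gamma$-invariance and descent of $\End(\bE)$.
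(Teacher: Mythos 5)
Your overall frame---reduce to the $\Sp(2g,\Z)$-invariant numbers $\int_\N\alpha^a\beta^b\gamma^c$ and recognize the right-hand side as a generating-function repackaging of the Thaddeus--Zagier intersection numbers---is the right one, but the proposal defers essentially all of the mathematical content to steps that are neither stated nor checked, and the one mechanism you do describe would not work as stated. The paper's proof is a short change of variables: Zagier's Proposition 2 in \cite{zagier} already gives the closed residue formula \eqref{zagierinters} for $\int_\N f(\beta)e^{u(\beta)\alpha+w(\beta)\gamma^*}$, and one only has to express $P_{(2)}$ via \eqref{contraction2} in terms of $\alpha,\beta$ and $\gamma^*=\alpha\beta-2\gamma$ (the hypothesis $P\in y^2\C[y^2]$ is exactly what makes the coefficient of $\gamma^*$ polynomial) and substitute; the only delicate points are the sign convention for $\gamma$, which differs between \cite{thaddeus1} and \cite{zagier}, and the $2^{2g}$ normalization between $\N(\SL_2)$ and $\N(\PGL_2)$---precisely the items you set aside as ``bookkeeping.'' Your plan instead proposes to re-derive the closed formula from scratch, a much larger task than the statement requires, and it is not carried out: the functional identity that $Z_g$ is supposed to satisfy is never written down, the verification that $R_g$ satisfies it is only asserted, and the base case is not performed.

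The concrete gap is the recursion itself. You attribute a genus recursion $Z_g\leftrightarrow Z_{g-1}$ to Thaddeus's chain of flips between moduli of stable pairs. Those flips occur at fixed genus---the wall-crossing parameter is the stability parameter for pairs---and what they produce is a fixed-genus evaluation of the intersection numbers (whose resummation over walls is what yields the $1/(2\sinh)$-type denominator), not a relation between $\N_g$ and $\N_{g-1}$. The genus drop, i.e.\ the fact that each factor of $\gamma$ trades $g$ for $g-1$ and is responsible for the exponent $g$ on $P''$, is a separate ingredient (Thaddeus's $\gamma$-reduction), and the proposal conflates the two; as written, the recursion on which your induction rests has no proof. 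A secondary point: the reduction ``treat $T$ and $P$ as monomials'' is not available, since the identity is not multilinear in $P$ (it enters through $\exp$, $P'$ and $P''$); what is true, and what you need, is equality of coefficients of the two formal series in the coefficients of $T$ and $P$, which brings you back to matching against a known closed formula---that is, to the substitution into Zagier's result that the paper actually performs.
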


\begin{proof}
Changing variables via $Q(T) = y^2$ in Proposition 2 of \cite{zagier}, we obtain 
\begin{equation}\label{zagierinters}\int_\N f(\beta)e^{u(\beta)\alpha + w(\beta)\gamma^*} = \res_{y=0}\frac{(-4)^{g-1}f(y^2)u(y^2)^g}{y^{2g-2}\sinh(yu(y^2)+y^3w(y^2))}.
\end{equation}
Observe that substituting ${\gamma^* = \alpha\beta-2\gamma}$ in \eqref{contraction2} \footnote{Notice that formula (6) in \cite{zagier} matches the one in \cite{thaddeus1} at page 14, but the $\gamma$'s defined in the two papers differ by a sign. Therefore, we have to apply the formulas in \cite{zagier} by changing the sign of $\gamma$ wherever it appears.} one arrives at $$P_{(2)} = -\alpha P''(\sqrt{\beta}) + \gamma^*\left(P''(\sqrt{\beta})/\beta-P'(\sqrt{\beta})/\beta\sqrt{\beta}\right) $$ (note that the coefficient of $\gamma^*$ is a polynomial since $P$ is divisible by $y^2$). Finally, performing the substitutions $$ f(\beta)=T(\sqrt{\beta}),\ \ u(\beta) = -P''(\sqrt{\beta}), \ \ w(\beta) = P''(\sqrt{\beta})/\beta-P'(\sqrt{\beta})/\beta\sqrt{\beta}$$  in \eqref{zagierinters}, we obtain \eqref{stabint}. The discrepancy of a factor of $2^{2g}$ comes from the difference between the moduli of $SL_2$ and $PGL_2$ bundles.
\end{proof}

\subsection{The equivariant integration formula}
Now we are ready to fomulate one of our main results.
\begin{theorem} \label{equintm}
	\[  \oint_\M   T_{(0)}\exp(Q_{(2)})  =
\sum_{r\in\{0,u,-u\}}	\thres_{y=r}\frac{2^{-1}T\cdot(Q''-2u/(u^2-y^2))^g}
	{ u^{g-1}\left(e^{Q'}\frac{u-y}{u+y}-e^{-Q'}\frac{u+y}{u-y}\right)
		y^{2g-2}(u^2-y^2)^{g-1}}\,dy
	\]
\end{theorem}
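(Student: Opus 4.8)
The plan is to localize the equivariant integral on $\M$ to the fixed-point components of the $\T$-action, compute the restriction of $T_{(0)}\exp(Q_{(2)})$ and of the equivariant Euler classes there, and then recognize the resulting sum of contributions as the residue formula being asserted. The crucial structural input is that $\M$ deformation-retracts onto its core $\calC$, and the fixed-point set $\M^\T$ decomposes into the moduli of stable bundles $\N$ (where $\Phi=0$) together with the so-called ``Hitchin components'' indexed by effective divisors, i.e. by pairs corresponding to a splitting $E = L_1 \oplus L_2$ with the Higgs field having a single off-diagonal term (these are the critical submanifolds of the moment map $\|\Phi\|^2$, in Hitchin's original analysis). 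One then has $\oint_\M x = \int_\N \frac{x|_\N}{\eqeu(N_\N)} + \sum_{\text{other components}} \int_F \frac{x|_F}{\eqeu(N_F)}$.

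\textbf{Key steps.} First I would recall from \cite{hausel-thaddeus1} the precise description of $\M^\T$ and of the universal bundle $\bE$ restricted to each component: on $\N\times C$ it is the usual universal bundle, while on the other components it splits as a sum of two line bundles carrying $\T$-weights $0$ and (up to normalization) $2$, twisted along the divisor direction. This lets me compute $\ch_\T(\End(\bE))|_{F\times C}$ explicitly, hence the Künneth components $Q_{(2)}|_F$, $T_{(0)}|_F$ in terms of $u$ and the tautological classes of the factors; in particular on $\N$ the restriction is just $T_{(0)}\exp(Q_{(2)})$ with the classes $\alpha,\beta,\gamma$ unchanged and no $u$-dependence, so that term is handled by the stable-moduli formula \eqref{stabint} and produces exactly the $\res_{y=0}$ summand after matching $2u/(u^2-y^2)\to 0$ and the correction factors $\frac{u\mp y}{u\pm y}\to 1$, $(u^2-y^2)^{g-1}\to u^{2g-2}$ in the limit ``$u=\infty$'' relative to $y$ — equivalently, one checks the $y=0$ residue of the claimed integrand reduces to the right-hand side of \eqref{stabint}. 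Next, for the divisor-indexed components, the fixed loci are (symmetric products of) copies of $C$ fibered over Jacobians; I would compute $\eqeu(N_F)$ as a product over the normal directions, which organizes into the factors $(u^2-y^2)^{g-1}$, $y^{2g-2}$ and the hyperbolic-type denominator $e^{Q'}\frac{u-y}{u+y}-e^{-Q'}\frac{u+y}{u-y}$; the sum over these components should, by a Zagier-type manipulation (the same change of variables $Q(T)=y^2$ used in the proof of \eqref{stabint}), collapse into the two residues at $y=\pm u$. Essentially one wants to show that the full localization sum equals $\sum_{r\in\{0,u,-u\}}\thres_{y=r}(\cdots)\,dy$ by identifying each geometric contribution with one pole of a single rational-in-$y$, entire-in-the-Jacobian-classes integrand and then integrating out the abelian-variety factors (which only contributes the top exterior power, giving the powers of $y$ and $u\pm y$).

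\textbf{Main obstacle.} The hardest part is the bookkeeping for the divisor-type fixed components: getting the $\T$-weights on the normal bundle exactly right (including the shifts coming from $K$ and from the degree of the divisor), and then showing that the sum over all such components — a priori an infinite-looking combinatorial sum over effective divisors of varying degree — telescopes into the clean closed form with just poles at $y=\pm u$. This is precisely where Zagier's formalism does the heavy lifting: the generating-function identity that converts ``sum over components of a localization contribution'' into ``residue of a rational function'' is the technical heart, and one must be careful that the substitutions $f(\beta)=T(\sqrt\beta)$, $u(\beta)=-P''(\sqrt\beta)$, $w(\beta)=\cdots$ from the proof of \eqref{stabint} deform correctly when the extra variable $u$ (and the poles at $y=\pm u$) are switched on. A secondary but routine check is that the three residues together produce a genuine Laurent polynomial in $u$ (no spurious poles in $u$), which must hold since the left-hand side manifestly lies in $\C[u,u^{-1}]$.
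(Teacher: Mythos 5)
Your overall localization setup (fixed components $\N$ plus the divisor-type components, contribution $=\int_F x|_F/\eqeu(N_F)$) is the same as the paper's, but there is a genuine gap in how you treat the $\N$-term. You claim that on $\N$ the integrand is just $T_{(0)}\exp(Q_{(2)})$ handled by \eqref{stabint}, and that this "produces exactly the $\res_{y=0}$ summand'' after a matching in the limit $u\to\infty$ — "equivalently, one checks the $y=0$ residue of the claimed integrand reduces to the right-hand side of \eqref{stabint}.'' This is false: the $y=0$ residue in the theorem is $u$-dependent and equals $\int_\N T_{(0)}\exp(Q_{(2)})/\eqeu(T^*\N)$, not the compact intersection number $\int_\N T_{(0)}\exp(Q_{(2)})$; a limit $u\to\infty$ comparison throws away exactly the finite-$u$ information the theorem asserts. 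The missing idea is how to evaluate the $u$-dependent class $1/\eqeu(T^*\N)$ on $\N$: one must express it as a multiplicative characteristic class of $T\N$ with $B(t)=1/(u-t)$ and convert it, via the analogue of Zagier's formula \eqref{abzag}, into Witten-notation classes (formula \eqref{bm}), i.e.\ absorb it into a shifted $Q\mapsto Q+\widehat\Psi(y)+\widehat\Psi(-y)$ and a modified $T$, and only then apply \eqref{stabint}. Without this step the $\res_{y=0}$ summand, with its factors $u^{g-1}$, $(u^2-y^2)^{g-1}$ and $e^{\pm Q'}\frac{u\mp y}{u\pm y}$, cannot be obtained.

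For the remaining components your description is also off in ways that matter for the bookkeeping you flag as the main obstacle. In this fixed-determinant/$\PGL_2$ setting the non-$\N$ fixed loci are the finite list $F_i\cong S^{2g-2i-1}(C)$, $i=1,\dots,g-1$ (no Jacobian fibration, no infinite sum over divisors), so there is nothing to "telescope''; what the paper actually does is (i) restrict $T_{(0)},Q_{(2)}$ to $F_i$ using the known restriction of $c_2(\End\bE)$, (ii) compute $\eqeu(N_{F_i})$ from the equivariant Chern character of $R\pi_*(\End_0(\bE)\to\End_0(\bE)\otimes K)$ (Lemma \ref{equeunfi}), (iii) evaluate each $\int_{F_i}$ by the Thaddeus–Zagier residue formula on symmetric products, giving a residue at $y=-u$, and (iv) sum over $i$ by \emph{extending} the finite sum to $i=1,\dots,\infty$ (the added residues vanish), summing the resulting geometric series into the hyperbolic denominator, and splitting the answer between $y=u$ and $y=-u$ by oddness of the form. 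Your proposal gestures at "a Zagier-type manipulation'' but does not supply this mechanism, so as written both the $y=0$ and the $y=\pm u$ parts of the identity remain unproved.
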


\begin{proof}
	To calculate the equivariant integral as defined in \eqref{equint}, we need to list the components of the fixed point set  $\M^{\T}$ of the circle action and identify the corresponding normal bundles. This data may be found in \cite[\S4,5,6]{hausel-thaddeus2}, and thus we will brief here. There are two sorts of stable $\T$-fixed Higgs bundles $ (E,\Phi) $  (cf. beginning of \S\ref{intro1}):
	\begin{itemize}
		\item $E$ is stable and $\Phi=0$. This set of Higgs bundles forms a copy $\N\subset\M$ of the moduli of stable bundles, which extends to the embedding $T^*\N\subset\M$.
		\item $ E=L\oplus \Lambda L^{-1} $, $1\le\deg L\le g-1$, $\Phi|_{\Lambda L^{-1} }=0$, and 
		$\Phi|_{ L}$ is a nonzero map of line bundles $L\to \Lambda L^{-1} K$. As the degree of $\Lambda L^{-2} K$ is $2g-1-2\deg L$, such a map gives us a divisor on $C$, i.e. an element of $S^{2g-2\deg L-1}(C)$ of the corresponding symmetric product of the curve. It is easy to see that this correspondence is, in fact, a bijection, the list of fixed point set of this second type is $F_i\cong S^{2g-2i-1}(C)$, $i=1,\dots,g-1$. 
	\end{itemize}

We thus have	 
	\begin{equation}\label{equintMterms}
	\oint_\M   T_{(0)}\exp(Q_{(2)}) = \int_{\N}\frac{ T_{(0)}\exp(Q_{(2)})}{\eqeu(N_\calN)}
	+ \sum_{i=1}^{g-1}\int_{F_i}\frac{ T_{(0)}\exp(Q_{(2)})}{\eqeu(N_{F_i})}
	\end{equation}

	We start by describing a formula for multiplicative characteristic classes on $\N$.
	Let $B(t)$ be a formal power
	series and for a vector bundle $V$ denote by $B(V)$ the corresponding
	multiplicative characteristic class of the $V$: denoting the Chern
	roots of $V$ by $t_i,\,i=1,\dots,\mathrm{rank}(V)$, this class is
	the product $\prod_iB(t_i)$. 
	\begin{lemma} 
		Let $B(t)$ be a formal power series with $B(0)\neq1$. Then
		\begin{equation}
		\label{bm}
		B(T\N) =  \left[ B(0)B(y)B(-y)\right]^{g-1}_{(0)}
		\exp\left(\left[\widehat B(y)+\widehat B(-y)\right]_{(2)}\right),  
		\end{equation}
		where $\widehat B(t)$ satisfies $\prt t\widehat B(t) =-\log B(t)$.
	\end{lemma}
	\begin{proof}
		
		Denote by $\pi$ the projection $C\times\N\to\N$ and by
		$\omega$ the positive integral generator of the second cohomology of
		$C$. Then by \cite{zagier} we have,
		for $k>0$,
		\begin{equation}
		\label{abzag}
		s_k(T\N) = \pi_*((g-1)(s_k(\mathrm{Ad}(U))\omega-s_{k+1}(\mathrm{Ad}(U))/(k+1)).  \end{equation}
		Here $s_k$ is the power sum symmetric polynomial (in any number of
		variables), i.e. $s_k(b_1,b_2,\dots)=b_1^k+b_2^k+\dots$ 
		
		We write $B(T\N)$ as $\exp \left(\sum_i\log(B(t_i))\right)$, and
		then we apply the equality \eqref{abzag} to the sum in the
		exponential. The result is two terms, the first of which contributes
		a factor $B(\mathrm{Ad}(U))^{g-1}\cap 1$, while the second gives
		$\exp(\widehat B(\mathrm{Ad}(U))\cap\omega)$, where 1 and $\omega$ are, as usual,
		the positive integral generators of $H_0(C,\Z)$ and $H_2(C,\Z)$,
		respectively. The Chern roots of $\mathrm{Ad}(U)$ are $0,\pm y$, and using the
		notation introduced in \S\ref{wittennot}, we obtain
		\eqref{bm}.
	\end{proof}

	Now we can calculate the first term of the \eqref{equintMterms}.
	Observe that $\eqeu(T^*\N)^{-1}$ is a multiplicative class
	of $T\N$ corresponding to the function \[ B(t)=\Psi(t)\overset{\mathrm{def}}{=}1/(u-t) \] Then  
	\[ \widehat \Psi(t) = -(u-t)\log(u-t)-t \text{ with }\frac{\D}{\D t}\widehat \Psi(t)=\log(u-t)\text{ and }
	\frac{\D^2}{\D t^2}\widehat \Psi(t)=\frac{-1}{u-t}
	\]
	and
	\[ \frac1{\eqeu{(T^*\N)}}=\frac1{u^{g-1}}
	\left[\frac1{(u-y)(u+y)}\right]_{(0)}^{g-1}
	\exp\left( \left[\widehat\Psi(y)+\widehat\Psi(-y)\right]_{(2)}\right).\]
	We have 
	\[  \frac{\D}{\D y}\left( \Psi(y)+\Psi(-y)\right) =\log(u-y)-\log(u+y)
	\] \[ \frac{\D^2}{\D y^2}\left( \Psi(y)+\Psi(-y)\right)=
	\frac{-1}{u-y}+\frac{-1}{u+y}
	\]

	Now, combining this with \eqref{stabint} and \eqref{bm} shows that 
	\begin{equation}\label{contributionn}
	\int_{\N}\frac{ T_{(0)}\exp(Q_{(2)})}{\eqeu(T^*\calN)}=
	\frac{1}{2}\res_{y=0}\frac{T\cdot(Q''-2u/(u^2-y^2))^g}
	{ u^{g-1}\left(e^{Q'}\frac{u-y}{u+y}-e^{-Q'}\frac{u+y}{u-y}\right)
		y^{2g-2}(u^2-y^2)^{g-1}}\,dy.
	\end{equation}
	This calculates the first summand of \eqref{equintMterms} since, being $\N\subseteq \M$ a Lagrangian subvariety, we have $N_\N = T^*\N$.

	Now let $F_i \simeq S^{2g-2i-1}(C)$ for $i=1,\ldots, g$ be the other components of the fixed point set.
To evaluate 	$$\int_{F_i}\frac{ T_{(0)}\exp(Q_{(2)})}{\eqeu(N_{F_i})}\in \C(u), $$ first we compute $T_{(0)}|_{F_i}$ and $Q_{(2)}|_{F_i}$.  For this we define (c.f. \cite[\S 5]{hausel-thaddeus2}) universal classes $\eta\in H^2(F_i)$ and $\xi_i\in H^1(F_i)$ for $i=1\dots 2g$ by computing the first Chern class
of the universal divisor $\Delta\subset F_i\times C\cong S^{2g-2i-1}(C)\times C$ in K\"unneth decomposition:

$$c_1(\Delta)=(2g-2i-1)\omega+\sum_{i=1}^{2g} \xi_i e_i + \eta \in H^2(F_i\times C)= $$ $$=H^0(F_i)\times H^2(C)\oplus H^1(F_i)\times H^1(C)\oplus H^2(F_i)\times H^0(C).$$ We also define $\theta_i=\xi_i\xi_{i+g}$ for $i=1\dots g$ and $\theta=\sum_{i=1}^g \theta_i$. With these notations we have. 

\begin{lemma} We have \beq \label{restrictiont}T_{(0)}|_{F_i}=T(\eta-u)\in H^*_\T(F_i)\cong H^*(F_i)[u]\eeq and 
	\beq\label{restrictionq}Q_{(2)}|_{F_i}=(1-2i) Q^\prime(\eta-u)-\theta Q^{\prime \prime}(\eta-u)\in H^*_\T(F_i)\cong H^*(F_i)[u]\eeq
	\end{lemma}
\begin{proof} Recall from last displayed line of proof of \cite[(6.1)]{hausel-thaddeus2} that $$ -\text{c}_2(\End(\bE))|_{F_i}=( (1-2i)\omega +
\eta-u + \sum_{j=1}^{2g} \xi_j e_j)^2.$$ We get \eqref{restrictiont} immidiately and that $$ [y^{2k}]_{(2)}|_{F_i}=(1-2i)(2k)(\eta-u)^{2k-1}+\binom{2k}{2}(-2 \theta) (\eta-u)^{2k-2}.$$ This in turn yields \eqref{restrictionq} 
\end{proof}

Next we need a formula for the equivariant Euler class $e_\T(N_{F_i})$. 

\begin{lemma} \label{equeunfi} We have\bes e_\T(N_{F_i})&=& (2u-\eta)^{g+2i-2}   \exp\left(
	\frac{\theta}{\eta-2u}\right) (u)^{g-1} (u-\eta)^{-2i+g}  \exp\left(\frac{\theta}{\eta-u}\right)  
\\ && \cdot (\eta-u)^{2i-2+g} 	\exp\left(\frac{\theta}{u-\eta}\right)\\  &=&  u^{g-1} (-1)^g  (\eta-u)^{2g-2} (2u-\eta)^{g+2i-2}   \exp\left(
	\frac{\theta}{\eta-2u}\right) .\ees
	
	\end{lemma}

\begin{proof} We know that the tangent bundle of $\M$ restricted to $F_i$ can be computed as $$T_\M|_{F_i}\cong R^1\pi_*\left(\End_0(\bE)\stackrel{Ad({\boldsymbol{\Phi}})}{\to}\End_0(\bE)\otimes K_C\right),$$
where $\pi: F_i\times C\to F_i$ is the projection. To compute this first we have from \cite[(6.1)]{hausel-thaddeus2} 
$$\ch_\T(\End_0(\bE))=$$ $$=1+\exp\left( (1-2i)\omega +
\eta -u + \sum_{j=1}^{2g} \xi_j e_j \right)+\exp\left( (2i-1)\omega -
\eta+u - \sum_{j=1}^{2g} \xi_j e_j\right).$$
Thus we can compute \bes\ch_\T(T_\M|_{F_i})&=&-\ch_\T\left(R\pi_*\left(\End_0(\bE)\stackrel{Ad({\boldsymbol{\Phi}})}{\to}\End_0(\bE)\otimes K_C\right)\right)\\ &=&\pi_*\left(\ch_\T(\End_0(\bE)(-1+\exp(u)\ch(K_C)){\mathrm t \mathrm d}(C)\right)\\&=& \pi_*\left( \ch_\T(\End_0(\bE))  (-1+\exp(u)+(g-1)(1+\exp(u))\sigma)\right)\\ &=& \exp(2u)((g-1)\exp(-\eta)+\exp(-\eta)(2i-1-\theta)\\&&+\exp(u)((g-1)(1+\exp(-\eta))-\exp(-\eta)(2i-1-\theta) \\&& +((g-1)(1+\exp(\eta))-\exp(\eta)(2i-1+\theta) \\&&+ \exp(-u)((g-1)\exp(\eta)+\exp(\eta)(2i-1+\theta)
\\ &=& \exp(2u)((2i-2)\exp(-\eta)+\sum_{i=1}^g \exp(-\eta-\theta_i))\\&&+\exp(u)((g-1)-2i\exp(-\eta))-\sum_{i=1}^g \exp(-\eta-\theta_i)) \\&& +((g-1)-2i\exp(\eta))-\sum_{i=1}^g\exp(\eta+\theta_i)) \\&&+ \exp(-u)((2i-2)\exp(\eta)+\sum_{i=1}^g \exp(\eta+\theta_i)).\ees
The four lines give the contributions from the four weight spaces of the $\T$-action on $T_\M|_{F_i}$: weight $2$, weight $1$, weight $0$ and weight $-1$. The $0$ weight space corresponds to the tangent space $T_{F_i}<T_{\M}|_{F_i}$. Removing it will yield the normal bundle $N_{F_i}$ of $F_i$ in $\M$. Thus \bes\ch_\T(N_{F_i}) &=& \exp(2u)((2i-2)\exp(-\eta)+\sum_{i=1}^g \exp(-\eta-\theta_i))\\&&+\exp(u)((g-1)-2i\exp(-\eta))-\sum_{i=1}^g \exp(-\eta-\theta_i)) 
 \\&&+ \exp(-u)((2i-2)\exp(\eta)+\sum_{i=1}^g \exp(\eta+\theta_i)).
\ees Formal computation now gives the Lemma.
\end{proof}

	The final ingredient is  due to Zagier \cite[(7.2)]{thaddeus}.  For any power series
	$A(x)\in\C[[x]]$ and $B(x)\in\C[[x]]$  we have \[\int_{F_i} A(\eta)\exp(B(\eta)\theta)=\res_{x=0} \left\{
	\frac{A(x)\left(1+x B(x)\right)^g}{
		x^{2g-2i}}\right\}dx.\] 
	
	So we can proceed to compute with $y=x-u$ \begin{multline*} \int_{F_i}\frac{ T_{(0)}\exp(Q_{(2)})}{\eqeu(N_{F_i})}=\int_{F_i} 
	\frac{T(\eta-u)\exp\left((1-2i) Q^\prime(\eta-u)-\theta Q^{\prime \prime}(\eta-u)\right)}{u^{g-1} (-1)^g  (\eta-u)^{2g-2} (2u-\eta)^{g+2i-2}   \exp\left(
		\frac{\theta }{ \eta-2u}\right)}\\= 
	\frac{(-1)^g}{u^{g-1}}\int_{F_i} \frac{T(\eta-u)\exp\left((1-2i) Q^\prime(\eta-u)\right)}{(\eta-u)^{2g-2} (2u-\eta)^{g+2i-2} }\exp\left(\left(-Q^{\prime \prime}(\eta-u)+	\frac{1 }{ 2u-\eta}\right)\theta\right) \\ 
	= 	\frac{(-1)^g}{u^{g-1}} \res_{x=0} \frac{T(x-u)\exp\left((1-2i) Q^\prime(x-u)\right)\left(1+x \left(-Q^{\prime \prime}(x-u)+	\frac{1 }{ 2u-x}\right) \right)^g\D x}{(x-u)^{2g-2} (2u-x)^{g+2i-2} x^{2g-2i}}
	  \\ = 	u^{1-g} \res_{y=-u}  \frac{T(y)\exp\left((1-2i) Q^\prime(y)\right)}{y^{2g-2} (u-y)^{g+2i-2}(u+y)^{g-2i} }\left(Q^{\prime \prime}(y)-	\frac{1 }{ u+y}-	\frac{1 }{ u-y} \right)^g
	\D y
	\end{multline*}
	
	We note that the right hand side of this expression has no pole at $y=-u$ for $i\geq g$ 
	thus the residue vanishes and so we have \begin{multline*} \sum_{i=1}^{g-1}  \int_{F_i}\frac{ T_{(0)}\exp(Q_{(2)})}{\eqeu(N_{F_i})}\\  = \sum_{i=1}^{g-1}	u^{1-g} \res_{y=-u} \left\{ \frac{T(y)\exp\left((1-2i) Q^\prime(y)\right)}{y^{2g-2}(u-y)^{g+2i-2} (y+u)^{g- 2i} }\left(Q^{\prime \prime}(y)-	\frac{1 }{ u+y}-	\frac{1 }{ u-y} \right)^g
	\right\} d y\\ 
	=\sum_{i=1}^{\infty}	u^{1-g} \res_{y=-u} \left\{ \frac{T(y)\exp\left((1-2i) Q^\prime(y)\right)}{y^{2g-2}(u-y)^{g+2i-2} (y+u)^{g- 2i} }\left(Q^{\prime \prime}(y)-	\frac{1 }{ u+y}-	\frac{1 }{ u-y} \right)^g
	\right\} d y\\  =	\res_{y=-u} \frac{ T\cdot(Q''-2u/(u^2-y^2))^g} 
	{ u^{g-1}\left(e^{Q'}\frac{u-y}{u+y}-e^{-Q'}\frac{u+y}{u-y}\right)
		y^{2g-2}(u^2-y^2)^{g-1}}\,dy. \end{multline*}

Finally we note that the expression in this residue is odd in $y$ and so will give the same result at $y=u$.
Thus we can conclude that $$\sum_{i=1}^{g-1} \int_{F_i}\frac{ T_{(0)}\exp(Q_{(2)})}{\eqeu(N_{F_i})}=\sum_{r\in \{u,-u\}}  \frac{1}{ 2}\res_{y=r} \frac{  T\cdot(Q''-2u/(u^2-y^2))^g\D y} 
	{ u^{g-1}\left(e^{Q'}\frac{u-y}{u+y}-e^{-Q'}\frac{u+y}{u-y}\right)
		y^{2g-2}(u^2-y^2)^{g-1}} $$

Together with \eqref{contributionn} this completes the proof of the Theorem.
\end{proof}

\subsection{The $\Sp(2g,\mathbb{Z})$ action}

Note that the orientation preserving index-2 subgroup of the mapping class  group of the Riemann surface underlying $C$ acts on $H^*(C)$ via\footnote{We note that our notation for $\Gamma$ and $\Sigma$ are swapped from the notation in \cite{hausel-thaddeus2}, this is to be more in line with  the notation of $\Gamma$ used in \cite{hausel-thaddeus3}.} $$\Sigma\coloneqq \Sp(2g,\Z)$$ acting on $H^0(C)$ and $H^2(C)$ trivialy and by preserving the symplectic structure on $H^1(C)$ given by the intersection pairing.

We see that $\Sigma$ acts on $H^3(\M)\cong H^1(C)$, where we identify $H^3(\M)$ with $H^1(C)$ by mapping $\psi_i$ to $e_i$ and letting $\Sigma$ act trivially on $\C[u]$. This will also yield an action of $\Sigma$ on $H^3_\T(\M)\cong H^3(\M)\otimes \C[u]$, where the identification is again done by mapping the equivariant $\psi_i$ to the non-equivariant one. Finally we will let $\Sigma$ act on $H^2_\T(\M)$ and $H^4_\T(\M)$ and so on $\alpha$ and $\beta$ trivially.

\begin{proposition}\label{rmksigma}  
The action defined above induces an action of $\Sigma$ on $H^*_{\mathbb{T}}(\M)$ by $\mathbb{C}[u]$-algebra automorphisms, and on $H^*(Z)$ by ring automorphisms. 
 The perverse filtrations on $H^*(\mathcal{M})$ and on $H^*(Z)$ are invariant under this action.
\end{proposition}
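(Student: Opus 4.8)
The plan is to establish the three assertions in turn. First, to see that $\Sigma$ acts by $\C[u]$-algebra automorphisms on $H^*_\T(\M)$: since $H^*_\T(\M)$ is generated as a $\C[u]$-algebra by $\alpha,\beta,\psi_1,\dots,\psi_{2g}$ (by \cite{hausel-thaddeus2}, as recalled in \S\ref{sec:generators}), it suffices to check that the assignment $\psi_i\mapsto$ (the $\Sp(2g,\Z)$-image) and $\alpha,\beta,u\mapsto$ identity respects all relations in the ring. The cleanest way is to observe that the whole construction is functorial: the universal class $c_2(\End(\bE))\in H^*_\T(\M)\otimes H^*(C)$ is canonical, and an element of the mapping class group acting on $C$ induces an automorphism of $\M$ (via its action on the curve) that carries the universal bundle to itself up to the relevant twist; under the Künneth decomposition \eqref{defabps} this is exactly the prescribed action — trivial on $H^0,H^2$, symplectic on $H^1$. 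Since it comes from an actual automorphism of $\M$, it automatically preserves the ring structure, and it is $\C[u]$-linear because the $\T$-action commutes with it. The same argument, applied to $Z=\M/\!/\T$, shows that $\Sigma$ acts by ring automorphisms on $H^*(Z)$; one only needs that the $\Sigma$-action and the $\T$-action on $\M$ commute (both are induced by the curve automorphism and the rescaling, which are independent), so the action descends to the GIT quotient.

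Next, the invariance of the perverse filtration on $H^*(\M)$. The key point is that the Hitchin map $\h\colon\M\to\calA=H^0(C;K^2)$ is $\Sigma$-equivariant: a mapping-class-group element $\phi$ acts on $\M$ and on $\calA$ (the latter via its action on $H^0(C;K^2)$), and $\h(\phi\cdot(E,\Phi))=\det(\phi_*\Phi)=\phi\cdot\h(E,\Phi)$. Equivariance of a proper map for a finite (indeed discrete) group action implies that the induced action on cohomology commutes with the perverse Leray filtration, because the perverse truncation functors $^{\mathfrak p}\tau_{\le p}$ are canonical and commute with pullback along the isomorphism $\phi$. Hence $\phi^*P_k H^*(\M)=P_kH^*(\M)$.

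Finally, for $H^*(Z)$: one extends the previous argument to the compactification. The action of $\Sigma$ on $\M$ extends to $\cM=(\M\times\C)/\!/\T$ (acting trivially on the $\C$-factor), hence to the divisor $Z$ at infinity, and the extended Hitchin map $\overline{\h}\colon\cM\to\overline{\calA}$ is again $\Sigma$-equivariant. By the same functoriality of perverse truncation, $\Sigma$ preserves $P_kH^*(\cM)$, and since $Z\hookrightarrow\cM$ is $\Sigma$-equivariant and — per the discussion preceding Theorem \ref{enumerativepw} — the restriction $H^*(\cM)\to H^*(Z)$ is surjective and compatible with the perverse filtrations in the relevant sense, the filtration on $H^*(Z)$ inherited from $\cM$ (equivalently, defined via $\overline{\h}|_Z$) is $\Sigma$-invariant. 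The main obstacle is the bookkeeping in the first paragraph: one must make sure the mapping-class-group automorphism of $\M$ really induces the stated \emph{linear} action on the generators $\alpha,\beta,\psi_i$ — i.e. that $\alpha,\beta$ are genuinely fixed and the $\psi_i$ transform by the standard symplectic representation with no lower-order corrections — which is a direct consequence of the Künneth formula in \eqref{defabps} together with the fact that $\phi$ acts trivially on $H^0(C)$ and $H^2(C)$, but deserves to be spelled out carefully.
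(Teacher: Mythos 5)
Your argument hinges on two claims: that a mapping class group element ``induces an automorphism of $\M$ via its action on the curve'', and that the Hitchin map $\h\colon\M\to\calA=H^0(C;K^2)$ is $\Sigma$-equivariant. Neither holds. A mapping class is an isotopy class of diffeomorphisms of the underlying surface, not a holomorphic automorphism of the fixed complex curve $C$ (for a generic curve of genus $\ge 3$ the automorphism group is trivial, so essentially no element of $\Sp(2g,\Z)$ is realized by an automorphism of $C$). Consequently it does not act on the moduli space $\M$ of Higgs bundles on $C$, nor on the Hitchin base $H^0(C;K^2)$, which depends on the complex structure of $C$; there is no evident $\Sp(2g,\Z)$-action on $\calA$ with respect to which $\h$ could be equivariant, so the functoriality of perverse truncation cannot be invoked. (One can manufacture an action on cohomology via a family over Teichm\"uller space, or via the character variety and non-abelian Hodge theory, but then compatibility with the $\T$-equivariant structure, with $\eta$, and with the Hitchin fibration are exactly the points that would require proof and are missing from your sketch.)

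The paper's proof avoids this entirely and is enumerative: the action is \emph{defined} only on the generators $\alpha,\beta,\psi_i$ (and trivially on $u$), and one checks via the localization formula \eqref{equint} that all equivariant intersection numbers $\oint_\M\alpha^k\beta^l\prod_i\psi_i^{m_i}$ are $\Sigma$-invariant, since the equivariant Euler classes occurring in the fixed-point contributions (from \eqref{abzag} and Lemma \ref{equeunfi}) are $\Sigma$-invariant and the $\Sigma$-action on $H^*(\N)$ and $H^*(F_i)$ does come from the mapping class group of $C$. This shows the ideal of relations among the generators is $\Sigma$-stable, hence the action extends to $\C[u]$-algebra automorphisms of $H^*_\T(\M)$; Kalkman's formula \eqref{kalkman} then gives invariance of the intersection numbers on $Z$, hence a ring action on $H^*(Z)$; and invariance of the perverse filtrations follows from their intersection-theoretic characterization on $\cM$ (using $\int_Z x=\int_\cM \eta x$, \eqref{decamiglio} and Poincar\'e duality) together with Proposition \ref{pwlift} --- no equivariance of $\h$ is needed. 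To salvage your geometric route you would have to construct the mapping-class-group action through a family of Higgs moduli over Teichm\"uller space and prove local constancy of the perverse filtration in that family, which is a substantially harder statement than the one at hand.
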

 \begin{proof}
 We observe that the equivariant intersection numbers $$\oint_\M \alpha^k\beta^l\prod_{i=1}^{2g}\psi_i^{m_i}\in \C(u)$$ are invariant with respect to this action of $\Sigma$. This follows from the fact that in \eqref{equint} the expression for the equivariant Euler class $e_\T(T^*\N)$ coming from \eqref{abzag}  is invariant under $\Sigma$. Analogously, the formula in Lemma~\ref{equeunfi} is $\Sigma$-invariant. 
 
Thus we deduce that the intersection numbers on $\N$ and $F_i$ are invariant under $\Sigma$ as the  $\Sigma$ action on $H^*(\N)$ and $H^*(F_i)$ comes from the mapping class group of $C$. It follows that the ideal of relations among the universal generators $\alpha,\beta,\psi_i$ of $H^*_\T(\M)$ is invariant under $\Sigma$, thus getting an action of $\Sigma$ on $H^*_\T(\M)$ by $\C[u]$-algebra automorphisms extending the one on $H^2_\T(\M)\oplus H^3_\T(\M)\oplus H^4_\T(\M)$ defined above.

In turn, by \eqref{kalkman} we get that all intersection numbers on $Z$ will be invariant by  $\Sigma$, which will yield a $\Sigma$-action on $H^*(Z)$ by ring automorphisms. Thus, since $\int_Zx = \int_{\cM}\eta x$, we see that the perverse filtration on $H^*(\cM)$ will be $\Sigma$ invariant, and finally also the one on $H^*(\M)$ by Proposition \ref{pwlift}. \end{proof}

Since the invariant part $H^*_\T(\M)^\Sigma$ is generated by $\alpha,\beta,\gamma$, we conclude that all intersection numbers are encoded in the $\Sigma$-invariant ones
$\int_\M \alpha^k\beta^l \gamma^m$. Moreover, thanks to Proposition \ref{rmksigma}, it is enough to prove P=W for the classes $\alpha$, $\beta$ and $\gamma$.

\subsection{The Bethe pole}
The formula in Theorem \ref{enumerativepw} is an explicit statement, which ought to follow from Theorem \ref{equintm}, but this calculation rather difficult to perform. The main reason is that it involves cancellation of two rather different terms: the residues of a differential form at $y=0$ and at $y=u$. In this section we mitigate this problem, and rewrite our statement in a completely local form.

Let us consider expression in Theorem \ref{equintm}. 
First we observe that the expression in parenthesis may be rewritten as follows $$
e^{Q'}\frac{u-y}{u+y}-e^{-Q'}\frac{u+y}{u-y}=\frac{(u-y\tanh(Q'/2))(u-y\coth(Q'/2))}{u^2-y^2}.
$$

The equivariant integral thus is the sum of the residues of the expression
\begin{equation}\label{exprtanh}
\Omega(u,y)=	\frac{2^{-2}T\cdot((u^2-y^2)Q''-2u)^g}
{u^{g-1}\sinh(Q')(u-y\tanh(Q'/2))(u-y\coth(Q'/2))y^{2g-2}(u^2-y^2)^{2g-2}}
\end{equation}
at $y=0$ and $y=\pm u$. Let us find all the other poles of this form. The following statement is an  easy consequence of the implicit function theorem. Its proof will be omitted.

\begin{proposition}\label{polesomega}
	a. Let us assume that $T(0)=1$ and $Q=Ay^2+P(y)$, where $A$ is  a nonzero constant, P is an even polynomial of degree $>2$. Then for $u$ sufficiently small, there is a $c>0$ such that the form $\Omega$ in \eqref{exprtanh} has the following poles inside the unit disc:
	\begin{itemize}
		\item at $y=0$, of order $2g-1$
		\item at $y=\pm u$ of order $2g-2$
		\item simple poles at $y=\pm b_0$, where $|b_0-\sqrt{u}/A|<cu^2$ for a constant $c$.
	\end{itemize}
	
	b. If $P=0$, i.e. $Q=Ay^2$,  then we can describe all remaining poles of $\Omega(u,y)$: these are  simple poles at $y=b_n$, $n\in\Z\setminus\{0\}$, where $|b_n-\pi i n|<cu$.
\end{proposition}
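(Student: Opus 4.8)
The plan is to first observe that, despite its hyperbolic-looking denominator, $\Omega$ is a rational function of $y$ times a power of an \emph{entire} function of $y$, and then to locate the zeros of that entire function by a Rouch\'e/Hurwitz comparison with the $u=0$ specialization, upgraded to $u$-dependent zeros via the implicit function theorem. Concretely, I would start from $\tanh(Q'/2)\coth(Q'/2)=1$ and $\tanh(Q'/2)+\coth(Q'/2)=2\coth Q'$ to rewrite
\[
\sinh(Q')\bigl(u-y\tanh(Q'/2)\bigr)\bigl(u-y\coth(Q'/2)\bigr)=(u^2+y^2)\sinh Q'(y)-2uy\cosh Q'(y)=:D(u,y),
\]
an entire function of $y$, jointly holomorphic in $(u,y)$, and \emph{odd} in $y$ because $Q'$ is odd. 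Hence
\[
\Omega(u,y)=\mathrm{const}\cdot\frac{T(y)\bigl((u^2-y^2)Q''(y)-2u\bigr)^g}{u^{g-1}\,D(u,y)\,y^{2g-2}(u^2-y^2)^{2g-2}},
\]
so every pole of $\Omega$ sits at $y=0$, at $y=\pm u$, or at a zero of $D(u,\cdot)$, and the oddness of $\Omega$ in $y$ explains the $\pm$-symmetry of the list.

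Next I would carry out the local analysis at the origin. Writing $D(u,y)=y\,\delta(u,y^2)$ (legitimate since $D$ is odd) and Taylor-expanding with $Q'(y)=2Ay+O(y^3)$, one finds $\delta(u,0)=2u(Au-1)$ and $\partial_s\delta(u,0)=2A+O(u)$; for $0<|u|$ small the former is nonzero and the latter is bounded away from $0$, so $\delta(u,\cdot)$ has a single simple zero $s_0(u)$ in a fixed disk, with $s_0(u)\to0$ as $u\to0$ and the stated asymptotics (this is the implicit function theorem applied to $\delta$). This produces the simple zeros $y=\pm b_0$ of $D$ near $0$. Combining with the triple point $y=0$ of $D$ against the factor $y^{2g-2}$, and with the non-vanishing of $D$ at $y=\pm u$ (where $D(u,\pm u)=\mp 2u^2e^{-Q'(u)}$), I would then just evaluate the numerator at the candidate points — it equals $\mathrm{const}\cdot(2u)^g(Au-1)^g$ at $y=0$, $\mathrm{const}\cdot T(\pm u)(-2u)^g$ at $y=\pm u$, and $(-4u+O(u^2))^g\,T(b_0)$ with $T(b_0)=1+O(u)$ at $y=\pm b_0$, all nonzero for small $u\neq0$ — to read off the pole orders $2g-1$, $2g-2$ and $1$, respectively.

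To finish part (a) I would show there are no other poles in a suitable fixed disk: at $u=0$ we have $D(0,y)=y^2\sinh Q'(y)$, and picking $\rho=\rho(Q)>0$ so small that $|Q'(y)|<\pi$ on $\{|y|\le\rho\}$ forces $y=0$ to be the \emph{only} zero of $D(0,\cdot)$ there, of order three; Rouch\'e (for $|u|$ small) then gives exactly three zeros of $D(u,\cdot)$ in this disk, namely $0$ and $\pm b_0$, while $\pm u$ also lie in it — this disk playing the role of the "unit disk" in the statement. For part (b), where $Q=Ay^2$ and $D(0,y)=y^2\sinh(2Ay)$ has in addition a simple zero at each $y_n:=\pi i n/(2A)$, $n\neq0$, I would run Rouch\'e on a sequence of circles $|y|=R_N\to\infty$ missing the $y_n$ — on which $|u^2\sinh(2Ay)-2uy\cosh(2Ay)|<|y^2\sinh(2Ay)|$ once $|u|$ is small and $R_N$ large (using that $\coth(2Ay)$ is bounded away from the $y_n$) — to conclude that every remaining zero of $D(u,\cdot)$ lies near exactly one $y_n$; so, with $0,\pm b_0$ from the small disk, the full zero set of $D(u,\cdot)$ is $\{0,\pm b_0\}\cup\{b_n:n\neq0\}$. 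The $\tfrac{\pi i}{2A}$-periodicity of $\sinh(2Ay)$ then turns the defining equation near each $y_n$ into one and the same model equation $(u^2+(y_n+w)^2)\sinh(2Aw)=2u(y_n+w)\cosh(2Aw)$, to which a contraction-mapping (or implicit-function) argument applies uniformly in $n$ using $|y_n|\to\infty$, producing a unique simple root $b_n$ with $|b_n-y_n|\le c|u|$, $c$ independent of $n$. These are at most simple poles of $\Omega$ (at a $b_n$ where the numerator happens to vanish there is no pole).

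I expect the one genuinely delicate step to be the global count in part (b): making sure the Rouch\'e comparison still works on circles receding to infinity for a \emph{fixed} small $u$, and that the bound $|b_n-y_n|\le c|u|$ is uniform in $n$. The periodicity of $\sinh(2Ay)$ is precisely what keeps this routine rather than delicate; everything else is the implicit function theorem applied to the entire function $D$, together with the two elementary algebraic simplifications above.
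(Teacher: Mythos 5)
Your proof is correct and follows exactly the route the paper indicates (the paper omits the argument, citing the implicit function theorem): rewriting the denominator as the entire odd function $D(u,y)=(u^2+y^2)\sinh Q'-2uy\cosh Q'$, applying the implicit function theorem at the origin, and using Rouch\'e/Hurwitz for the local and global zero counts are precisely the omitted details. Note only that your computation actually yields $b_0=\sqrt{u/A}+O(|u|^{3/2})$ and $b_n$ near $\pi i n/(2A)$, i.e.\ it proves the proposition with its constants correctly normalized (and with the ``unit disc'' replaced by a $Q$-dependent disc), which is all that is used later, namely that the only poles in $|y|<2|u|$ are $y=0,\pm u$.
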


\begin{remark}
When $P=0$, the numbers $ B_u=\{b_n,\,n\in\Z\}$ are  the  solutions of the Bethe Ansatz for the Yang-Yang model \cite{YY}. This was a crucial observation of \cite{MNSh}, who also studied a special case of these equivariant integrals in order to find a \textit{regularized volume} of the Higgs moduli. Their formula, obtained using mathematically nonrigorous methods, is an infinite sum over $B_u$, and can be easily recovered by applying the Residue Theorem to Theorem \ref{equintm}.
\end{remark}

The most important part of Proposition \ref{polesomega} is that for  $Q=Ay^2+P(y)$, and $u$ sufficiently small, the only poles of $\Omega(u,y)$ in the disc $ \{y:\,|y|<2u\} $ are the poles $y=0,-u,u$. In this case, we have an integral representation
\[  \res_{y=0,\pm u} \Omega(u,y)\,\D y= \int_{|y|=2\epsilon}\Omega(u,y)\,\D y
\]
and thus
\[  \res_{u=0}\res_{y=0,\pm u}\Omega(u,y)\,\D y\,\D u  = \int_{|u|=\epsilon}\int_{|y|=2\epsilon}\Omega(u,y)\,\D y\,\D u . 
\]

To perform the last double integral, we must first fix the magnitude of $u$ to be equal to $\epsilon$, and then compute the $y$-residues inside the circle of the $y$-plane of radius $2\epsilon$. However, as the last expression is an integral over a product of circles, we can apply Fubini's theorem and write our integral as
\[   \int_{|u|=\epsilon}\int_{|y|=2\epsilon}\Omega(u,y)\,\D y\,\D u    = \int_{|y|=2\epsilon} \int_{|u|=\epsilon}\Omega(u,y)\,\D u\,\D y.
\]

Finally, we convert this last integral into residues again. Now we fix a value of $y$ of
magnitude $2\epsilon$ and look for poles of our form inside the circle
of radius $\epsilon$ in the $u$-plane. Again, studying the
denominator, we see that we have a pole at $u=0$, but the factor $u^2-y^2$
now does not contribute. We have a pole, however, at the point
$u=y\tanh(Q'/2)$, because this is of order $u\sim
4\epsilon^2\ll\epsilon$. This leads to the following residue identity:
\[  \res_{u=0}\res_{y=0,\pm u}\Omega(u,y)\D y\, \D u\,  =
\res_{y=0}\left[\res_{u=0}+\res_{u=y\tanh(Q'/2)}\right]
\Omega(u,y)\D u\, \D y.
\]

Combining this with \eqref{exprtanh}, we arrive at the following formula for integration on $Z$:
\begin{multline}\label{splitresidue}\int_Z   T_{(0)}\exp(Q_{(2)})  =
-\res_{y=0}\left[\res_{u=0}+\res_{u=y\tanh(Q'/2)}\right]\\
\left( \frac{2^{-2}T\cdot((u^2-y^2)Q''-2u)^g\,\text{d}u\,\text{d}y}
{u^{g-1}\sinh(Q')(u-y\tanh(Q'/2))(u-y\coth(Q'/2))y^{2g-2}(u^2-y^2)^{2g-2}}\right).
\end{multline}

The second term may be further simplified since the pole of $\Omega(u,y)\, du$ at  $u=y\tanh(Q'/2)$ is simple, and
thus to calculate the residue, we simply perform the substitution $u=y\tanh(Q')$. Using the
identities
\[  \sinh(a)(\tanh(a/2)-\coth(a/2))=-2,\quad
\tanh^2(a/2)-1=\frac{-1}{\cosh^2(a/2)}
\]
and $\sinh(a)=2\sinh(a/2)\cosh(a/2)$, the denominator turns into
\[  -2y\cdot y^{g-1}\tanh^{g-1}(Q'/2)\cdot y^{2g-2}\cdot y^{2(2g-2)}\cosh^{-2(2g-2)}(Q'/2),
\]
while the numerator is 
\[   2^{-2} T(u=y\tanh(Q'/2))\cosh^{-2g}(Q'/2)(-y^2Q''-y\sinh(Q'))^g.
\]
Canceling the similar factors we arrive at
\begin{multline} \res_{y=0}\frac{ 2^{-2}
  T(u=y\tanh(Q'/2))\cdot\cosh^{-2g}(Q'/2)\cdot(-y^2Q''-y\sinh(Q'))^g\, \text{d}y}
{-2y\cdot y^{g-1}\tanh^{g-1}(Q'/2)\cdot y^{2g-2}\cdot y^{2(2g-2)}\cosh^{-2(2g-2)}}=
\\
 \res_{y=0}\frac{ -2^{-3}
  T(u=y\tanh(Q'/2))\cdot\cosh^{2g-4}(Q'/2)\cdot(-yQ''-\sinh(Q'))^g\, \text{d}y}
{\tanh^{g-1}(Q'/2)\cdot y^{6g-6}}.
\end{multline}
Thus formula (\ref{splitresidue}) yields the following result:
\begin{proposition}
  \label{otherres} We have the following formula for the intersection numbers of $Z$: 
  \begin{multline}
    \label{twores}
\int_Z
T_{(0)}\exp(Q_{(2)})=\\
\thres_{y=0}\thres_{u=0}
\frac{-2^{-2}T\cdot((u^2-y^2)Q''-2u)^g\,\emph{d}u\,\emph{d}y}
{u^{g-1}\sinh(Q')(u-y\tanh(Q'/2))(u-y\coth(Q'/2))y^{2g-2}(u^2-y^2)^{2g-2}}+\\
 \thres_{y=0}\frac{ 2^{-3}
  T(u=y\tanh(Q'/2))\cdot\cosh^{2g-4}(Q'/2)\cdot(-yQ''-\sinh(Q'))^g\, \emph{d}y}
{\tanh^{g-1}(Q'/2)\cdot y^{6g-6}}.
  \end{multline}
\end{proposition}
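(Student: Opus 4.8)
The plan is to derive \eqref{twores} by feeding the equivariant integration formula of Theorem~\ref{equintm} into Kalkman's residue formula \eqref{kalkman} and then performing a chain of contour deformations that converts the resulting global residue into the two local residues in the statement. First I would substitute Theorem~\ref{equintm} into \eqref{kalkman}, so that $\int_Z T_{(0)}\exp(Q_{(2)})=-\res_{u=0}$ of the sum of the $y$-residues at $y=0$ and $y=\pm u$ of the one-form of Theorem~\ref{equintm}, and rewrite the bracketed factor in its denominator via the elementary identity
\[
e^{Q'}\frac{u-y}{u+y}-e^{-Q'}\frac{u+y}{u-y}=\frac{(u-y\tanh(Q'/2))(u-y\coth(Q'/2))}{u^2-y^2},
\]
so that the integrand becomes the form $\Omega(u,y)\,\mathrm{d}y$ of \eqref{exprtanh}. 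Thus $\int_Z T_{(0)}\exp(Q_{(2)})=-\res_{u=0}\bigl[\res_{y=0}+\res_{y=u}+\res_{y=-u}\bigr]\Omega(u,y)\,\mathrm{d}y\,\mathrm{d}u$.

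Next I would localize using Proposition~\ref{polesomega}. It suffices to prove the identity when $T(0)=1$ and $Q=Ay^2+P(y)$ with $A\neq 0$: in each fixed cohomological degree both sides of \eqref{twores} are finite linear combinations of intersection numbers, hence polynomial in the coefficients of $T$ and $Q$, so the general case follows by Zariski density. For such $Q$ and $u$ small, Proposition~\ref{polesomega} guarantees that the only poles of $\Omega(u,\cdot)$ in the disc $|y|<2u$ are $y=0,\pm u$; hence $\res_{y=0,\pm u}\Omega\,\mathrm{d}y=\int_{|y|=2\epsilon}\Omega\,\mathrm{d}y$ and consequently
\[
\res_{u=0}\res_{y=0,\pm u}\Omega\,\mathrm{d}y\,\mathrm{d}u=\int_{|u|=\epsilon}\int_{|y|=2\epsilon}\Omega\,\mathrm{d}y\,\mathrm{d}u .
\]
Being an honest integral over a product of circles, Fubini allows the order of integration to be exchanged; then, fixing $|y|=2\epsilon$ and hunting for poles of $\Omega(\cdot,y)\,\mathrm{d}u$ inside $|u|<\epsilon$, one finds only $u=0$ and the simple pole $u=y\tanh(Q'/2)$, which lies inside since its modulus is $O(\epsilon^2)$, while the factor $u^2-y^2$ contributes no zero there. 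Re-expressing the $u$-integral as these two residues yields the split residue formula \eqref{splitresidue}.

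It then remains to compute the residue at $u=y\tanh(Q'/2)$. Since this pole is simple, the residue is obtained by cancelling the factor $u-y\tanh(Q'/2)$ and substituting $u=y\tanh(Q'/2)$ everywhere else. Using $\sinh(a)\,(\tanh(a/2)-\coth(a/2))=-2$, $\tanh^2(a/2)-1=-\cosh^{-2}(a/2)$ and $\sinh a=2\sinh(a/2)\cosh(a/2)$, the denominator collapses to $-2\,y^{6g-6}\tanh^{g-1}(Q'/2)\cosh^{-2(2g-2)}(Q'/2)$ and the numerator to $2^{-2}\,T(y\tanh(Q'/2))\cosh^{-2g}(Q'/2)(-y^2Q''-y\sinh Q')^g$; cancelling the common powers of $\cosh(Q'/2)$ and $y$ gives exactly the second term of \eqref{twores}, while the $u=0$ residue is the first term, completing the proof.

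The step I expect to be the main obstacle is the localization: one must check carefully that for $Q=Ay^2+P(y)$ the extra ``Bethe'' pole $y=\pm b_0$ genuinely escapes the disc $|y|<2u$ (this is precisely where the estimate $|b_0-\sqrt{u}/A|<cu^2$ of Proposition~\ref{polesomega} is used), that no new pole crosses the $u$-contour when the order of integration is swapped, and that the radii $\epsilon$ and $2\epsilon$ can be chosen uniformly so that Fubini applies; once this analytic bookkeeping is in place, the remaining computation is a routine hyperbolic-identity simplification.
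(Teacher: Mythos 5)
Your argument is correct and follows the paper's own proof essentially verbatim: Kalkman's formula \eqref{kalkman} applied to Theorem \ref{equintm}, rewriting the integrand as $\Omega(u,y)$ from \eqref{exprtanh}, localization via Proposition \ref{polesomega}, Fubini on the product of circles to swap the order of residues, and evaluation of the simple pole at $u=y\tanh(Q'/2)$ with the same hyperbolic identities. The only slip is bookkeeping: the collapsed denominator should be $-2y\cdot y^{g-1}\tanh^{g-1}(Q'/2)\cdot y^{2g-2}\cdot y^{2(2g-2)}\cosh^{-2(2g-2)}(Q'/2)$, i.e.\ it carries $y^{7g-6}$ rather than $y^{6g-6}$, which is exactly what cancels the $y^{g}$ hidden in $(-y^2Q''-y\sinh Q')^g$ to produce the $y^{6g-6}$ in the second term of \eqref{twores}.
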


Notice that if the polynomial $T$ is divisible by $\eta^{2g-2}$, then the first residue at $u=0$ vanishes. In particular, if $k\le g-1$, we obtain
\begin{corollary}\label{zresidue} Define $$R_{g,k}^Q(y) \coloneqq  (-1)^{g}2^{-3}\left(\frac{\emph{sinh}(Q'/2)}{y}\right)^{2g-2k-2}\emph{cosh}^{2k-2}(Q'/2)\left(Q''+\frac{\emph{sinh}(Q')}{y}\right)^g,$$ then, for $1\le k \le g-1$,
 \begin{equation}\label{orig}\int_Z \eta^{3g-3-2k}T_{(0)}\emph{exp}(Q_{(2)})=\thres_{y=0}\frac{\emph{d} y}{y^{4k-1}}T (u=y \cdot \emph{tanh}(Q'/2))R_{g,k}^Q(y). \end{equation}
 \end{corollary}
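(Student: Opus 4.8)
The plan is to deduce \eqref{orig} from the very chain of residue identities that yields Proposition~\ref{otherres}, carrying along the extra factor contributed by $\eta^{3g-3-2k}$. Since $\eta=\kappa_Z(u)$ and the Kirwan map is a ring homomorphism, $\eta^{3g-3-2k}T_{(0)}\exp(Q_{(2)})=\kappa_Z\!\bigl(u^{3g-3-2k}\,T_{(0)}\exp(Q_{(2)})\bigr)$; as $\oint_\M$ is $\C[u]$-linear and $3g-3-2k\ge 0$ in the range considered, Kalkman's formula \eqref{kalkman} gives
\[
\int_Z \eta^{3g-3-2k}T_{(0)}\exp(Q_{(2)})=-\res_{u=0}\,u^{3g-3-2k}\Bigl(\oint_\M T_{(0)}\exp(Q_{(2)})\Bigr)\,\D u .
\]
Inserting Theorem~\ref{equintm}, rewritten through \eqref{exprtanh} as $\oint_\M T_{(0)}\exp(Q_{(2)})=\sum_{r\in\{0,\pm u\}}\res_{y=r}\Omega(u,y)\,\D y$, I would then repeat \emph{verbatim} the Fubini and contour-deformation argument of the proof of Proposition~\ref{otherres}, now with integrand $u^{3g-3-2k}\Omega(u,y)$ in place of $\Omega(u,y)$. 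This is harmless: $u^{3g-3-2k}$ is entire, hence introduces no new poles in $y$ (so the pole list of Proposition~\ref{polesomega} is unchanged) nor in $u$, and Fubini on the torus $\{|u|=\epsilon\}\times\{|y|=2\epsilon\}$ applies as before. The outcome is the analogue of \eqref{splitresidue},
\[
\int_Z \eta^{3g-3-2k}T_{(0)}\exp(Q_{(2)})=-\res_{y=0}\Bigl[\res_{u=0}+\res_{u=y\tanh(Q'/2)}\Bigr]\,u^{3g-3-2k}\,\Omega(u,y)\,\D u\,\D y .
\]

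The next step is to see that the $u=0$ residue disappears. The only factor of $\Omega$ that can produce a pole at $u=0$ is the $u^{g-1}$ in its denominator: evaluating the remaining factors at $u=0$ leaves $\sinh(Q'(y))\,y^{6g-4}$ in the denominator and $2^{-2}T(y)(-y^2Q''(y))^g$ in the numerator, so $u^{g-1}\Omega$ is holomorphic at $u=0$ for $y$ in a punctured neighbourhood of $0$. Consequently $u^{3g-3-2k}\Omega=u^{(3g-3-2k)-(g-1)}\cdot u^{g-1}\Omega$ is holomorphic at $u=0$ whenever $3g-3-2k\ge g-1$, i.e.\ whenever $k\le g-1$, and then $\res_{u=0}\,u^{3g-3-2k}\Omega\,\D u=0$. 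Thus for $1\le k\le g-1$ only the simple pole at $u=y\tanh(Q'/2)$ survives; this is exactly the place where the hypothesis $k\le g-1$ enters.

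It remains to evaluate the simple-pole residue. Since $u^{3g-3-2k}$ is holomorphic at $u=y\tanh(Q'/2)$, its presence merely multiplies the residue already computed in the proof of Proposition~\ref{otherres} by $\bigl(y\tanh(Q'/2)\bigr)^{3g-3-2k}$. Using the same substitutions as there — $u^2-y^2\big|_{u=y\tanh(Q'/2)}=-y^2\cosh^{-2}(Q'/2)$, together with $\sinh(Q')\bigl(\tanh(Q'/2)-\coth(Q'/2)\bigr)=-2$ and $\sinh(Q')=2\sinh(Q'/2)\cosh(Q'/2)$ — and then collecting powers, the exponent of $\tanh(Q'/2)$ becomes $3g-3-2k-(g-1)=2g-2-2k$, which recombines with the surviving $\cosh^{2g-4}(Q'/2)$ into $\sinh^{2g-2-2k}(Q'/2)\,\cosh^{2k-2}(Q'/2)$ — precisely the hyperbolic content of $R_{g,k}^Q$ — while the net power of $y$ in the denominator, after cancelling the $y^g$ from $(-yQ''-\sinh Q')^g=(-1)^g y^g\bigl(Q''+\sinh(Q')/y\bigr)^g$, equals $(4k-1)+(2g-2k-2)$, matching $y^{4k-1}$ times the $y^{2g-2k-2}$ occurring in $R_{g,k}^Q$. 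One therefore reads off
\[
\res_{u=y\tanh(Q'/2)}\,u^{3g-3-2k}\,\Omega(u,y)\,\D u=-\,T\!\left(u=y\tanh(Q'/2)\right)\,\frac{R_{g,k}^Q(y)}{y^{4k-1}}\,\D y ,
\]
and substituting into the previous display produces \eqref{orig}.

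As with Proposition~\ref{otherres} itself, the hypotheses of Proposition~\ref{polesomega} are no restriction: both sides of \eqref{orig} depend polynomially on $T$ and algebraically — through the Laurent coefficients in $y$ of the residues involved — on the coefficients of $Q$, and they coincide on the Zariski-dense locus covered by Proposition~\ref{polesomega}, hence for all admissible $T$ and $Q$. The only genuine effort in this argument is the bookkeeping in the last step, i.e.\ tracking the powers of $y$ and of $\sinh$, $\cosh$, $\tanh$ and the signs; everything else is a direct transcription of the proof of Proposition~\ref{otherres}, together with the elementary observation that the factor $u^{3g-3-2k}$ kills the $u=0$ pole precisely when $k\le g-1$.
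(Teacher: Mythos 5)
Your argument is correct and follows essentially the paper's own route: the corollary is Proposition \ref{otherres} applied with the extra factor $u^{3g-3-2k}$, the double-residue term at $u=0$ vanishing because that pole has order only $g-1$ (hence $k\le g-1$ suffices), and the simple pole at $u=y\tanh(Q'/2)$ yielding $T(u=y\tanh(Q'/2))R^Q_{g,k}(y)/y^{4k-1}$ after the same hyperbolic-function and power-of-$y$ bookkeeping. Your sharp pole-order count at $u=0$ is exactly what the paper's remark preceding the corollary asserts, so nothing is missing.
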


 \subsection{Defects and the order of the pole}

 We denote simply by $R_{g,k}(y)$ the function appearing in \eqref{orig} when we set $Q = -Ay^2/2-Gy^4/4$. Notice that it is a holomorphic function around $y=0$.
 Then \eqref{defclass} and Corollary \ref{zresidue} tell us that $$\int_Z\eta^{3g-3-2k}\alpha^i(4\gamma)^j\beta^m\eta^n\text{exp}(Q_{(2)}) = $$ $$ \res_{y=0}\frac{\D y}{y^{4k-1}}y^{2m} \partial_A^i(\partial_G-y^2\partial_A)^j(y\ \! \text{tanh}(Q'/2))^nR_{g,k}(y).$$
 Let $\widetilde{R}_{g,k} = R_{g,k}(y,G = y^{-2}\widetilde{G})$: notice that it is still a holomorphic function in $y$. Then, since $\partial_G = y^2\partial_{\widetilde{G}}$, we get $$\int_Z\eta^{3g-3-2k}\alpha^i(4\gamma)^j\beta^m\eta^n\text{exp}(Q_{(2)}) = $$ $$\res_{y=0}\frac{\D y}{y^{4k-1}}y^{2m+2n+2j}\partial_A^i(\partial_{\widetilde{G}}-\partial_A)^j(\tanh(Q'/2)/y)^n\widetilde{R}_{g,k}(y) $$
 and in this formula, we are taking the residue of a meromorphic function with a pole at $y=0$ of order $4k-1-2m-2n-2j$. Since the order of the pole of the sum two meromorphic functions is at most the maximum between the orders of the two poles, we are brought to make the following definition.
 
 \begin{definition}\label{defect}
 We define the \emph{defect} of a monomial in $\alpha,\beta,\gamma$ and $\eta$ via the assignment $$\text{def}( \alpha )= 0, \ \text{def}(\beta) =  \text{def}(\gamma) =  \text{def}(\eta) = 2 $$ and extending it by multiplicativity. We also define the defect of any polynomial in $\alpha,\beta,\gamma$ and $\eta$ to be the \emph{minimum} of the defects of its monomials.
 \end{definition}

Notice that if $x$ is a class in $\alpha, \beta$ and $\gamma$, so that it can be seen as a class $x\in H^i(\mathcal{M})$, then we immediately verify $$\text{def}(x) = i - \text{wt}(x)$$ where $\text{wt}(x)$ is the weight of $x$.

By Theorem \ref{enumerativepw} we immediately get the following.

\begin{corollary}
If for every $x\in H^*(Z)$ we can find $y\in H^*(Z)$ with $$\int_Z\eta^{3g-3-\emph{def}(x)}
(x+\eta y)r = 0, \ \emph{for all } r\in H^*(Z)$$ then P=W holds for $\mathcal{M}$.
\end{corollary}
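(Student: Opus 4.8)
The plan is to deduce this corollary directly from Theorem~\ref{enumerativepw} by a reduction argument. Recall that Theorem~\ref{enumerativepw} asks, for each pair $(l,m)$, for an extension $\beta^l\gamma^m+\eta q$ of the class $\beta^l\gamma^m\in H^*(Z)$ satisfying the vanishing \eqref{intersectionz} with the \emph{specific} exponent $3g-3-2l-2m$. The hypothesis of the corollary is formally stronger in that it allows an arbitrary class $x\in H^*(Z)$, but weaker in that it only demands the vanishing with exponent $3g-3-\mathrm{def}(x)$. The first step is therefore to observe that $\mathrm{def}(\beta^l\gamma^m)=2l+2m$, so that $3g-3-\mathrm{def}(\beta^l\gamma^m)=3g-3-2l-2m$ is exactly the exponent occurring in \eqref{intersectionz}. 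Hence applying the hypothesis with $x=\beta^l\gamma^m$ produces precisely the required $y\in H^*(Z)$; writing $y$ as a polynomial in $\alpha,\beta,\gamma,u$ via the Kirwan surjectivity (as in the proof of Theorem~\ref{enumerativepw}), one gets the desired $q$, so \eqref{intersectionz} holds for all $(l,m)$ and Theorem~\ref{enumerativepw} gives $P=W$ for $\mathcal M=\mathcal M_\Dol(\PGL_2)$.

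In slightly more detail, the second step is to carry out this identification of the extension class carefully: the hypothesis gives $y$ with $\int_Z\eta^{3g-3-2l-2m}(\beta^l\gamma^m+\eta y)r=0$ for all $r\in H^*(Z)$, and since the restriction map $H^*(\cM)\to H^*(Z)$ is surjective onto a ring generated by $\alpha,\beta,\gamma,\eta$ (cf.\ the proof of Theorem~\ref{enumerativepw}), we may lift $y$ to a polynomial $q(\alpha,\beta,\gamma,u)$; the class $\beta^l\gamma^m+\eta q$ is then the extension of $\beta^l\gamma^m\in H^*(Z)$ required in \eqref{intersectionz}. Conversely the $x$-quantifier over all of $H^*(Z)$ in the corollary is never needed here — we only instantiate $x=\beta^l\gamma^m$ — so no subtlety about general $x$ arises. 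One should note that by Proposition~\ref{rmksigma} it suffices to treat $\Sigma$-invariant classes, which are exactly the polynomials in $\alpha,\beta,\gamma$, consistent with only using $x=\beta^l\gamma^m$.

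I do not anticipate a serious obstacle: this is a bookkeeping corollary, and the only thing that must be checked with care is the arithmetic $\mathrm{def}(\beta^l\gamma^m)=2l+2m$ from Definition~\ref{defect} (multiplicativity with $\mathrm{def}(\beta)=\mathrm{def}(\gamma)=2$), together with the fact that the powers of $\eta$ in the two statements then literally coincide. The one point worth a sentence of justification is why it is enough to verify \eqref{intersectionz} for the classes $\beta^l\gamma^m$ and not for $\alpha^k\beta^l\gamma^m$: this is already handled inside the proof of Theorem~\ref{enumerativepw}, where the vanishing \eqref{vanishing} for $\beta^l\gamma^m$ is propagated to $\alpha^k\beta^l\gamma^m$ using Proposition~\ref{pwlift} and the weights of $\alpha,\beta,\gamma$, so nothing new is required. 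Thus the proof is essentially: match the exponents, invoke Kirwan surjectivity to turn $y$ into $q$, and quote Theorem~\ref{enumerativepw}.
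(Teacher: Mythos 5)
Your argument is correct and is exactly the paper's intended (immediate) deduction from Theorem~\ref{enumerativepw}: instantiate $x=\beta^l\gamma^m$, note $\mathrm{def}(\beta^l\gamma^m)=2l+2m$ so the $\eta$-exponents in the corollary and in \eqref{intersectionz} coincide, and quote the theorem. The one point needing care --- that a general $y\in H^*(Z)$ may involve the $\psi_i$, so one must pass to its $\Sigma$-invariant part via Proposition~\ref{rmksigma} to obtain an honest $q(\alpha,\beta,\gamma,\eta)$ (your phrase ``generated by $\alpha,\beta,\gamma,\eta$'' is literally true only for the invariant subring) --- you do address, in the same way the paper handles lifts in the converse direction of Theorem~\ref{enumerativepw}.
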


\section{The matrix problem for the top defect pairing}

\subsection{The $\beta^k$ classes}

In this section we will prove the following.

\begin{theorem}\label{mainbeta}
	Let $g\ge 2$ and $1\le k \le g-1$. There exists a unique class $F_k\in H^*(Z)$ such that, for all $P\in H^*(Z)$ with $\emph{def}(P) = 2k-2$, we have \begin{equation}\label{eq}\int_Z\eta^{3g-3-2k}(\beta^k + \eta F_k)P = 0.\end{equation}
\end{theorem}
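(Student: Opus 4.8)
The plan is to reformulate the required vanishing \eqref{eq} as a finite-dimensional linear algebra problem and solve it via the residue formula \eqref{orig} of Corollary~\ref{zresidue}. First I would fix $g$ and $1\le k\le g-1$, and observe that by the $\Sp(2g,\mathbb Z)$-invariance of the whole set-up (Proposition~\ref{rmksigma}), it suffices to test \eqref{eq} against $\Sigma$-invariant classes $P$, i.e. monomials $P=\alpha^i\gamma^j\beta^m\eta^n$; and since $\mathrm{def}(P)=2(j+m+n)$, the constraint $\mathrm{def}(P)=2k-2$ reads $j+m+n=k-1$. Likewise the unknown correction $F_k$ can be taken $\Sigma$-invariant, so $\beta^k+\eta F_k$ is a polynomial in $\alpha,\beta,\gamma,\eta$ of cohomological degree $4k$. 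The pairing $\int_Z\eta^{3g-3-2k}(\cdot)(\cdot)$ then lands, via \eqref{orig}, in a residue $\thres_{y=0}y^{-(4k-1)}(\cdots)\,\mathrm dy$ of a \emph{holomorphic} expression divided by $y^{4k-1}$, so only the Taylor coefficients of order $\le 4k-2$ of the integrand matter. The key point, as set up in the ``Defects and the order of the pole'' subsection, is that multiplying by a degree-$4k$ class of defect $2d$ shifts the pole order down by $2d$; so when $d=k$ (the top defect $2k$, attained by $\beta^k$ itself) one gets a \emph{simple} pole, i.e. the pairing is controlled by a single Taylor coefficient.

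Concretely, I would write $\beta^k+\eta F_k$ as $\beta^k+\eta q(\alpha,\beta,\gamma,\eta)$ with $q$ of defect $\ge 2k-2$ (degree $4k-2$), and use Corollary~\ref{zresidue}: the pairing against $P=\alpha^i\gamma^j\beta^m\eta^n$ with $j+m+n=k-1$ becomes
\[
\thres_{y=0}\frac{\mathrm dy}{y^{4k-1}}\,\partial_A^{i'}(\partial_{\widetilde G}-\partial_A)^{j'}\big(\tanh(Q'/2)/y\big)^{n'}\,y^{2(\,\cdots\,)}\,\widetilde R_{g,k}(y)
\]
evaluated at $A=G=0$ (here the primed exponents are the total ones coming from $\beta^k+\eta F_k$ times $P$), exactly in the normalized form displayed before Definition~\ref{defect}. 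Because $\mathrm{def}(\beta^k\cdot P)=2k+2(k-1)\equiv$ full defect, the term coming from $\beta^k$ contributes a residue of a function with a pole of order $4k-1-2(m'+n'+j')=1$, i.e. a simple pole, whose residue is the value of a holomorphic function at $y=0$, namely $\big(\tanh(Q'/2)/y\big)^{n'}\widetilde R_{g,k}$ evaluated at $y=0$ (with the appropriate $\partial_A,\partial_{\widetilde G}$ applied). Since $\tanh(Q'/2)/y\big|_{y=0}=Q''(0)/2\cdot\! 1+\cdots$ is a unit-type factor and $\widetilde R_{g,k}(0)\ne 0$ (it is a nonzero constant times $\big(\sinh(Q'/2)/y\big)^{2g-2k-2}\cdots$ evaluated at $0$, which by the explicit form of $R_{g,k}^Q$ in Corollary~\ref{zresidue} is a nonzero power of $-A/2$ — here one must be slightly careful and substitute $G\to y^{-2}\widetilde G$ first so that everything stays holomorphic), the $\beta^k$-part of the pairing is a \emph{nonzero} linear functional of the multi-index $(i,j,m,n)$ ranging over $j+m+n=k-1$. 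The contribution of $\eta F_k$, by contrast, has strictly larger defect-deficit by construction if $F_k$ has defect $\ge 2k-2$, so it contributes residues of functions with pole of order $\le 3$; more to the point, each monomial of $\eta q$ produces a residue that is a \emph{lower-order} jet of the same holomorphic integrand. The upshot is a \emph{triangular} linear system: order the monomials of $\beta^k+\eta q$ by how many of the ``defect-carrying'' generators $\beta,\gamma,\eta$ they contain (or equivalently by the pole order they induce), and the coefficients of $q$ appear triangularly with nonzero diagonal entries (the relevant jets of $\widetilde R_{g,k}$ and of $\tanh(Q'/2)/y$, all of which are nonzero because $\widetilde R_{g,k}(0)\ne 0$ and $Q''(0)\ne 0$).

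For existence I would argue as follows: the conditions \eqref{eq}, one for each $P=\alpha^i\gamma^j\beta^m\eta^n$ with $j+m+n=k-1$, together with the degree/defect constraints on $q$, form a square linear system for the coefficients of $q$; by the triangularity just described its matrix is invertible, so the system has a (unique) solution $F_k$. For uniqueness I would note that if $\beta^k+\eta F_k$ and $\beta^k+\eta F_k'$ both satisfy \eqref{eq}, then $\eta(F_k-F_k')$ pairs to zero with everything of defect $2k-2$; since $\eta(F_k-F_k')$ has defect $\ge 2$, hence pairs into residues of pole order $\le 4k-3$, and since the pairing $\int_Z$ is a perfect pairing on $H^*(Z)$ (Poincaré duality, $Z$ being a projective orbifold of dimension $3g-7$, and $4k+(4k-4)+(6g-6-4k)$ matching the top degree $12g-14$ after the $\eta^{3g-3-2k}$ factor — I'd double-check the degree bookkeeping here), the same triangularity forces all coefficients of $F_k-F_k'$ to vanish inductively. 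The main obstacle I anticipate is making the triangularity precise: one must (i) verify that $\widetilde R_{g,k}(0)\neq 0$ and that the relevant higher $y$-jets and $A,\widetilde G$-derivatives of the integrand are nonzero — this is a genuine computation with the explicit $R_{g,k}^Q$ — and (ii) organize the monomial basis so that the map ``coefficients of $q$'' $\mapsto$ ``values of the functionals'' is genuinely triangular rather than merely block-structured; the substitution $G\mapsto y^{-2}\widetilde G$ is essential here to keep the integrand holomorphic and to read off pole orders correctly, and getting the degree/defect bookkeeping exactly right (so that the number of unknowns equals the number of equations) is the delicate part.
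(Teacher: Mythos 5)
Your reduction of \eqref{eq} to a finite linear-algebra problem via Corollary~\ref{zresidue} and $\Sigma$-invariance is the same starting point as the paper, but the engine of your argument --- a square linear system that is triangular with nonzero diagonal --- does not exist, and this is a genuine gap. First, the counting is wrong: the unknowns are the coefficients of the defect-homogeneous monomials $\alpha^a\beta^b\gamma^c\eta^d$ of degree $4k-2$ and defect $2k-2$, while the conditions are indexed by the $k(k+1)/2$ classes $P$ of defect $2k-2$; already for $k=3$ there are $5$ unknowns and $6$ equations (the monomial $\gamma^2$ is excluded by the degree constraint), and the discrepancy grows with $k$, so the system is over-determined and ``invertibility'' cannot give existence. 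Second, and more fundamentally, there is no jet hierarchy: since $\operatorname{def}(\eta)=2$, every monomial $\eta F_{k,a_1,n_1}$ of the correction has defect exactly $2k$, the \emph{same} as $\beta^k$, so every entry of the pairing matrix is a simple-pole residue, i.e.\ an evaluation at $y=0$ of the same kind as the $\beta^k$ entry --- not a ``lower-order jet''. The resulting matrix is the dense matrix of products of binomial coefficients computed in Lemma~\ref{explicit}, which is not triangular in any ordering of the monomials, so neither existence nor uniqueness follows from the mechanism you describe. (Your degree bookkeeping for Poincar\'e duality on $Z$, of dimension $6g-7$, is also off, but that is minor.)

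What is actually needed, and what the paper supplies, is a proof that the over-determined system is compatible: one must exhibit a vector in $\ker M_k^T$ whose $\beta^k$-coordinate is nonzero. The paper does this by an explicit residue ansatz (Proposition~\ref{firstsolution}), whose verification rests on the identity \eqref{master} and on the fact that the polynomials $p_k(X,Z)=\res_{t=0}t^{-k-1}(1+t)^{Z-2X}(1+2t)^X\,\D t$ vanish on the lattice triangle $\Gamma_k$; the nonvanishing of the $\beta^k$-coefficient is then the positivity $p_k(g,3g-k-2)>0$ (Theorem~\ref{topperv}), and uniqueness is a separate polynomial-interpolation argument showing $\ker M_k^T$ is one-dimensional (Theorem~\ref{uniqueness}). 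None of these ingredients is replaceable by the triangularity heuristic, and your uniqueness argument inherits the same flaw. As written, the proposal establishes only the (correct) reformulation as a matrix problem, not the theorem.
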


In the entire section, we will always assume that $1\le k \le g-1$.

In order to prove Theorem \ref{mainbeta}, we need to show that the pairing $$(F,P)\mapsto \int_Z\eta^{3g-3-2k}FP, $$ $$ F\in H^{4k}(Z), \ \text{def}(F) = 2k, \ P \in H^{6g-8}(Z), \ \text{def}(P) = 2k-2$$ where we are taking defect-homogeneous $F$ and $P$, is degenerate; moreover, we need an element of its kernel to be of the form $\beta^k+\eta F_k$ for some $F_k$.

To compute the matrix of the pairing, we choose the following basis of the $F$ classes $$F_{k,a_1,n_1} \coloneqq  \beta^{a_1-n_1}(4\gamma)^{n_1}\eta^{k-a_1}\alpha^{k-a_1-n_1}, \text{ with }\begin{cases} 0\le n_1\le a_1, \\a_1+n_1 \le k,\end{cases}$$

and the following basis of the $P$-classes $$P_{k,a_2,n_2} \coloneqq  \beta^{a_2-n_2}(4\gamma)^{n_2}\eta^{k-1-a_2}\alpha^{3g-3-k-a_2-n_2}, \text{ with }   0\le n_2\le a_2 \le k-1.
$$
The coefficients have been chosen in order to make computations easier later.

We then perform a column operation and define the matrix $M_{k}$ of the pairing as follows: \begin{equation}\label{ratiomat}(M_{k})_{a_1,n_1}^{a_2,n_2} \coloneqq \frac{(-2)^{k-a_1}}{(k-a_1-n_1)!n_1!}\frac{\int_Z\eta^{3g-3-2k}F_{k,a_1,n_1}P_{k,a_2,n_2}}{\int_Z\eta^{3g-3-2k}F_{k,k,0}P_{k,a_2,n_2}}\end{equation}

Therefore Theorem \ref{mainbeta} is equivalent to finding a vector in the kernel of $M_{k}^T$ whose coefficient corresponding to the term $\beta^k$ (i.e. the row indexed by $(a_1,n_1) = (k,0)$) is nonzero.

\begin{lemma}\label{explicit}
	We have
	$$(M_{k})_{a_1,n_1}^{a_2,n_2} = \binom{3g-3-a_1-n_1-a_2-n_2}{k-a_1-n_1}\binom{g-n_2}{n_1}. $$
\end{lemma}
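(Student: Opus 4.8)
The plan is to extract the matrix entries directly from the residue formula \eqref{orig} of Corollary \ref{zresidue}, specialized to $Q = -Ay^2/2 - Gy^4/4$. First I would recall that for this choice of $Q$ one has $Q' = -Ay - Gy^3$, so $Q'' = -A - 3Gy^2$, and $\tanh(Q'/2) = (Q'/2) + O(y^3)$; in particular $y\tanh(Q'/2) = -Ay^2/2 + O(y^4)$ is divisible by $y^2$, and $\sinh(Q')/y = -A + O(y^2)$, so that the function $R_{g,k}^Q(y)$ in Corollary \ref{zresidue} is holomorphic at $y=0$ with a well-controlled leading behavior. The key structural point, already used in the passage preceding Definition \ref{defect}, is that after the substitution $\widetilde G = y^2 G$ the differentiations $\partial_A$ and $\partial_G = y^2\partial_{\widetilde G}$ act cleanly, and each factor of $\beta = [y^2]_{(0)}$, $4\gamma$, $\eta$ contributes exactly $y^2$ to the order of the pole — which is precisely the defect bookkeeping. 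So the numerator $\int_Z \eta^{3g-3-2k} F_{k,a_1,n_1} P_{k,a_2,n_2}$ becomes the residue at $y=0$ of an explicit elementary function, and I would compute it by expanding everything in powers of $y$ to the single required order.

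The second step is to organize this computation so the ratio in \eqref{ratiomat} collapses to a product of two binomial coefficients. Here the combinatorial content is: (i) the $\beta$-powers and the factor $(y\tanh(Q'/2)/y^{?})^n$-type terms produce, upon reading off the coefficient of the relevant power of $y$, a binomial coefficient of the form $\binom{3g-3-a_1-n_1-a_2-n_2}{k-a_1-n_1}$ — this is the "degree-counting" binomial coming from how many ways the total $y$-power $6g-6$ (the top degree of $Z$, doubled) splits between the $F$-part and the $P$-part after the residue constraint; and (ii) the $\gamma = -2\sum\psi_i\psi_{i+g}$ insertions, which on $F_i\cong S^{2g-2i-1}(C)$ are governed by the Zagier formula $\int_{F_i}A(\eta)\exp(B(\eta)\theta) = \res_{x=0} A(x)(1+xB(x))^g/x^{2g-2i}\,dx$ (used in the proof of Theorem \ref{equintm}), contribute the factor $(1 + x\cdot(\text{something}))^g$ whose expansion in the $\theta$-variable gives $\binom{g - n_2}{n_1}$ — the $-n_2$ shift arising because the $P$-class already carries $n_2$ factors of $\gamma$, i.e. $n_2$ of the $g$ available "$\theta$-slots" are used up on the $P$ side. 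I would make this precise by tracking, in the single-variable residue of \eqref{orig}, the coefficient extraction $[\,\partial_A^{k-a_1-n_1}\partial_{\widetilde G}^{n_1}\cdots]$ and matching it term-by-term against the claimed product; the normalization factor $(-2)^{k-a_1}/((k-a_1-n_1)!\,n_1!)$ in \eqref{ratiomat} is exactly what is needed to absorb the $(-A)$-powers and the factorials produced by the differentiations, leaving pure binomials.

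The step I expect to be the main obstacle is \emph{identifying the $\binom{g-n_2}{n_1}$ factor cleanly} — that is, showing that the $\gamma$-insertions on the two sides interact only through the total count $n_1 + n_2$ against the fixed number $g$ of symplectic pairs, with no cross-terms surviving. Naively, $\gamma$ on the $F$-side and $\gamma$ on the $P$-side both come from the same universal classes $\psi_i$, so a priori the pairing could mix them in a complicated way; the point is that after restriction to each fixed-point component $F_i$ and application of the Zagier/Thaddeus formula, all the $\theta$-dependence is packaged into a single $(1 + x B(x))^g$ and the $y=0$ (equivalently $u=0$) residue in Proposition \ref{otherres}/Corollary \ref{zresidue} linearizes this. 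I would handle it by first doing the case $n_2 = 0$ (pure $\beta$ on the $P$-side), getting $\binom{g}{n_1}$, and then observing that replacing one $\beta$ by $4\gamma$ on the $P$-side shifts the exponent of the $(1 + xB)$-type factor by $-1$, yielding $\binom{g-1}{n_1}$, and inductively $\binom{g-n_2}{n_1}$; the "degree" binomial is insensitive to this since it only sees total cohomological degrees. Checking the first binomial $\binom{3g-3-a_1-n_1-a_2-n_2}{k-a_1-n_1}$ is then a routine matter of counting: the exponent of $y$ in the denominator of \eqref{orig} is $4k-1$, the numerator contributes a fixed power determined by $g$, and the $\beta,\gamma,\eta,\alpha$ exponents of $F_{k,a_1,n_1}P_{k,a_2,n_2}$ determine the rest, so that extracting the residue is choosing $k-a_1-n_1$ "slots" out of $3g-3-a_1-n_1-a_2-n_2$ available ones.
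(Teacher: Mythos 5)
Your proposal follows essentially the same route as the paper: apply Corollary \ref{zresidue} with $Q=-Ay^2/2-Gy^4/4$, use the $\widetilde G=Gy^2$ / defect bookkeeping to see that the relevant residue has only a simple pole and is therefore computed by evaluation at $y=0$, and then read off the two binomial factors from the $\partial_A$, $\partial_G$ derivative structure after normalizing by the reference row in \eqref{ratiomat}. The paper organizes the final extraction slightly more cleanly—computing first with the classes $\beta^{a-n}(\alpha\beta+4\gamma)^n\cdots$ and passing to the $4\gamma$ basis via the change of variables $B=A+\widetilde G$, so that the raw integral equals, up to sign and a power of $2$, $(3g-3-a_1-a_2-n_1-n_2)!\,(n_1+n_2)!\binom{g}{n_1+n_2}$ and both binomials, including your $\binom{g-n_2}{n_1}$, fall out of the ratio—but this is the same computation you outline.
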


\begin{proof}
	We use the formula of Corollary \ref{zresidue} choosing the polynomial $Q = -A y^2/2 - Gy^4/4$.
	First of all we perform the computations for $$\widetilde{F}_{k,a_1,n_1} \coloneqq  \beta^{a_1-n_1}(\alpha\beta+4\gamma)^{n_1}\eta^{k-a_1}\alpha^{k-a_1-n_1}, $$ $$\widetilde{P}_{k,a_2,n_2} \coloneqq  \beta^{a_2-n_2}(\alpha\beta+4\gamma)^{n_2}\eta^{k-1-a_2}\alpha^{3g-3-k-a_2-n_2}.$$
	Let us write $\widetilde{G} = G y^2$, so that $\partial_G = y^2\partial_{\widetilde{G}}$. Then by Corollary \ref{zresidue} we have  $$\int_Z\eta^{3g-3-2k}\widetilde{F}_{k,a_1,n_1}\widetilde{P}_{k,a_2,n_2} =  $$ $$\partial_A^{3g-3-a_1-a_2-n_1-n_2}\partial_{\widetilde{G}}^{n_1+n_2}\res_{y=0}\frac{\D y}{y}\left(\frac{\text{tanh}(-A y/2 - \widetilde{G}y/2)}{y}\right)^{2k-1-a_1-a_2}\widetilde{R}_{g,k}(A,\widetilde{G};y)$$ where $\widetilde{R}_{g,k}(A,\widetilde{G};y) \coloneqq  R_{g,k}(A,\widetilde{G}/y^2;y)$.
	We see that the form we are taking the residue of has a simple pole, thus its residue is computed by evaluating at $y=0$. Now $$\widetilde{R}_{g,k}(A,\widetilde{G};0) = 2^{2k-1-g}(A+\widetilde{G})^{2g-2k-2}(A+2\widetilde{G})^g$$ and the tanh factor gives $(-A/2-\widetilde{G}/2)^{2k-1-a_1-a_2}$. Therefore we have $$\int_Z\eta^{3g-3-2k}\widetilde{F}_{k,a_1,n_1}\widetilde{P}_{k,a_2,n_2}= $$ $$=(-1)^{a_1+a_2+1}2^{-g+a_1+a_2}\partial_A^{3g-3-a_1-a_2-n_1-n_2}\partial_{\widetilde{G}}^{n_1+n_2}(A+\widetilde{G})^{2g-3-a_1-a_2}(A+2\widetilde{G})^g$$ (notice that the integral does not depend on $k$).  Then using the classes $4\gamma$ in the definitions of $F_{k,a_1,n_1}$ and $P_{k,a_1,n_1}$ amounts to change variable $B = A+\widetilde{G}$, so that the formula becomes  $$\int_Zu^{3g-3-2k}F_{k,a_1,n_1}P_{k,a_2,n_2}= $$ $$=(-1)^{a_1+a_2+1}2^{-g+a_1+a_2}\partial_B^{3g-3-a_1-a_2-n_1-n_2}\partial_{G}^{n_1+n_2}B^{2g-3-a_1-a_2}(B+G)^g =$$ $$= (-1)^{a_1+a_2+1}2^{-g+a_1+a_2}(3g-3-a_1-a_2-n_1-n_2)!(n_1+n_2)!\binom{g}{n_1+n_2},$$ dividing out and using the coefficients of \eqref{ratiomat} we obtain
	$$(M_{k})_{a_1,n_1}^{a_2,n_2} = \binom{3g-3-a_1-n_1-a_2-n_2}{k-a_1-n_1}\binom{g-n_2}{n_1} $$
	and the proof is complete.
\end{proof}

Lemma \ref{explicit} allows us to relate the matrix $M_k$ with a particular evaluation operator on polynomials in two variables.

\begin{corollary}\label{polyev} 
	
	Let $v=(v_{a_1,n_1})_{\substack{0\le n_1\le a_1 \\ n_1+a_1\le k}}$ be a row vector, and let $$p_v(X,Z) \coloneqq  \sum_{\substack{0\le n_1\le a_1 \\ a_1+n_1\le k}}v_{a_1,n_1}\binom{3g-3-a_1-n_1-Z}{k-a_1-n_1}\binom{g-X}{n_1} $$
	Then $ v M_{k} = (p_v(n_2,a_2+n_2))_{0\le n_2\le a_2\le k-1}$.
\end{corollary}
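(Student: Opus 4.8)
The statement of Corollary \ref{polyev} is a direct bookkeeping consequence of the explicit formula for the matrix entries established in Lemma \ref{explicit}, so the plan is essentially to unwind the definitions and match indices. First I would recall that, by definition of matrix multiplication, the $(a_2,n_2)$-entry of the row vector $v M_k$ is $\sum_{a_1,n_1} v_{a_1,n_1} (M_k)_{a_1,n_1}^{a_2,n_2}$, where the sum ranges over the index set $\{0\le n_1\le a_1,\ a_1+n_1\le k\}$ parametrizing the $F$-basis. Substituting the closed form $(M_k)_{a_1,n_1}^{a_2,n_2} = \binom{3g-3-a_1-n_1-a_2-n_2}{k-a_1-n_1}\binom{g-n_2}{n_1}$ from Lemma \ref{explicit} gives exactly $\sum_{a_1,n_1} v_{a_1,n_1}\binom{3g-3-a_1-n_1-a_2-n_2}{k-a_1-n_1}\binom{g-n_2}{n_1}$.

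The only thing left is to observe that this expression is precisely $p_v(X,Z)$ evaluated at $X = n_2$, $Z = a_2+n_2$: the polynomial $p_v$ is defined with the substitution slots $\binom{g-X}{n_1}$ and $\binom{3g-3-a_1-n_1-Z}{k-a_1-n_1}$, and setting $X=n_2$, $Z=a_2+n_2$ turns these into $\binom{g-n_2}{n_1}$ and $\binom{3g-3-a_1-n_1-a_2-n_2}{k-a_1-n_1}$ respectively, recovering the $(M_k)_{a_1,n_1}^{a_2,n_2}$ term. Since $(a_2,n_2)$ runs over the $P$-basis index set $\{0\le n_2\le a_2\le k-1\}$, this yields $vM_k = (p_v(n_2, a_2+n_2))_{0\le n_2\le a_2\le k-1}$, as claimed. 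I should also note, for the sake of rigor, that $\binom{g-X}{n_1}$ and $\binom{3g-3-a_1-n_1-Z}{k-a_1-n_1}$ are genuine polynomials in the variables $X$ and $Z$ (of degrees $n_1$ and $k-a_1-n_1$ respectively), via the standard convention $\binom{t}{m} = t(t-1)\cdots(t-m+1)/m!$, so that $p_v(X,Z) \in \mathbb{Q}[X,Z]$ is well-defined and the evaluations $p_v(n_2,a_2+n_2)$ make sense.

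There is essentially no obstacle here: the content of the corollary is entirely in Lemma \ref{explicit}, and what remains is a formal translation of "linear combination of matrix rows" into "evaluation of a polynomial at a lattice of points." The one point requiring a small amount of care is the index alignment — making sure the variable $Z$ in $p_v$ corresponds to the combination $a_2+n_2$ and not, say, $a_2$ alone — but this is dictated unambiguously by comparing the argument $3g-3-a_1-n_1-a_2-n_2$ of the binomial coefficient in Lemma \ref{explicit} with the defining expression $3g-3-a_1-n_1-Z$ in the statement of the corollary.
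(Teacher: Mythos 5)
Your proof is correct and is exactly the argument the paper intends: Corollary \ref{polyev} is an immediate consequence of Lemma \ref{explicit}, obtained by expanding $vM_k$ entrywise and recognizing the substitution $X=n_2$, $Z=a_2+n_2$ in $p_v$. The remark on interpreting the binomial coefficients as polynomials in $X$ and $Z$ is a sensible bit of added rigor but does not change the substance.
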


We are now ready to find a vector in the kernel of $M_{k}^T$.

\begin{proposition}\label{firstsolution}
	Let $$v_{k}^{a_1,n_1} \coloneqq  \thres_{t=0}\frac{(-1)^{k-a_1-n_1}(1+t)^{g-k-2}(1+2t)^{g-n_1}\emph{d} t}{t^{a_1-n_1+1}}, \ \ 0\le n_1\le a_1, \  a_1+n_1\le k.$$ Then $$\sum_{\substack{0\le n_1\le a_1 \\ a_1+n_1\le k}}v_{k}^{a_1,n_1}(M_{k})_{a_1,n_1}^{a_2,n_2} = 0$$ for all $a_2,n_2$ with $0\le n_2\le a_2\le k-1$. In particular, the vector $v_k$ is in the kernel of $M_{k}^T$.
\end{proposition}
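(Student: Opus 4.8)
The plan is to use Corollary~\ref{polyev} to convert the asserted kernel relation into a polynomial identity, and then to evaluate the relevant quantity in closed form. By Corollary~\ref{polyev}, $v_k M_k=\big(p_{v_k}(n_2,a_2+n_2)\big)_{0\le n_2\le a_2\le k-1}$, so it suffices to prove the closed formula
$$p_{v_k}(X,Z)=[t^k]\big((1+t)^{Z-2X}(1+2t)^{X}\big),$$
where $[t^k]$ denotes the coefficient of $t^k$ in a formal power series; granting this, at $(X,Z)=(n_2,a_2+n_2)$ with $0\le n_2\le a_2\le k-1$ the right-hand side is the $t^k$-coefficient of the polynomial $(1+t)^{a_2-n_2}(1+2t)^{n_2}$, which has degree $a_2\le k-1<k$ and hence vanishes. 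This proves $v_k M_k=0$, i.e.\ $v_k\in\ker M_k^{T}$.

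To establish the closed form, first observe that the residue definition of $v_k^{a_1,n_1}$ says exactly that $v_k^{a_1,n_1}=(-1)^{k-a_1-n_1}\,[t^{a_1-n_1}]\big((1+t)^{g-k-2}(1+2t)^{g-n_1}\big)$. Substituting this into the expression for $p_v$ from Corollary~\ref{polyev} and setting $j=a_1-n_1$, the sum runs over $j,n_1\ge 0$ with $j+2n_1\le k$. Rewrite the binomial coefficient via $\binom{n}{m}=(-1)^m\binom{m-n-1}{m}$: with $W\coloneqq Z+k-3g+2$ one gets $\binom{3g-3-j-2n_1-Z}{k-j-2n_1}=(-1)^{k-j}\binom{W}{k-j-2n_1}$, and this sign cancels the factor $(-1)^{k-j}$ coming from $v_k$ (note $(-1)^{k-a_1-n_1}=(-1)^{k-j}$). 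The sum over $j$ is now a Cauchy product, and using $g-k-2+W=Z-2g$ it collapses to
$$\sum_{j}[t^j]\big((1+t)^{g-k-2}(1+2t)^{g-n_1}\big)\binom{W}{k-2n_1-j}=[t^{k-2n_1}]\big((1+t)^{Z-2g}(1+2t)^{g-n_1}\big);$$
the terms with $k-j-2n_1<0$ contribute zero to both sides, which is what lets us drop the constraint $j+2n_1\le k$. Writing $[t^{k-2n_1}]f=[t^k](t^{2n_1}f)$, summing over $n_1$ with weight $\binom{g-X}{n_1}$, and factoring out $(1+t)^{Z-2g}$, we are reduced to evaluating the inner sum by the binomial theorem:
$$\sum_{n_1\ge 0}\binom{g-X}{n_1}t^{2n_1}(1+2t)^{g-n_1}=(1+2t)^{g}\Big(1+\frac{t^2}{1+2t}\Big)^{g-X}=(1+2t)^{X}(1+t)^{2g-2X},$$
where the middle equality uses $1+2t+t^2=(1+t)^2$. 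Multiplying back the factor $(1+t)^{Z-2g}$ produces $(1+t)^{Z-2X}(1+2t)^{X}$ inside $[t^k]$, which is precisely the desired closed form.

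Together with the degree count of the first paragraph this completes the proof. I expect the only genuinely delicate point to be the formal-series bookkeeping: tracking the two sources of $(-1)^{k-j}$ that cancel, and checking that the manipulations remain valid when the exponents $g-k-2$, $W$, and $g-k-2+W=Z-2g$ are negative, so that everything must be read in $\C[[t]]$ and the various ``out-of-range'' binomial coefficients genuinely vanish. A convenient sanity check is that the resulting formula for $p_{v_k}$ does not depend on $g$, matching the $g$-independence already noticed in the proof of Lemma~\ref{explicit}.
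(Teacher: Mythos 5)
Your proof is correct and follows essentially the same route as the paper: both identify the pairing $\sum_{a_1,n_1}v_k^{a_1,n_1}(M_k)_{a_1,n_1}^{a_2,n_2}$ with the evaluation $p_k(n_2,a_2+n_2)$ of $p_k(X,Z)=\res_{t=0}\frac{\D t}{t^{k+1}}(1+t)^{Z-2X}(1+2t)^X$ and conclude by the degree count on $\Gamma_k$. The only difference is presentational: the paper obtains the collapse via the two-variable residue representation \eqref{twovartop} and the substitution identity \eqref{master}, whereas you carry out the equivalent Vandermonde-convolution and binomial-series bookkeeping directly in the single variable $t$.
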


\begin{proof}
	
	Define $$V_{g,k}(X,Z;v,w) \coloneqq  (1-v)^{-3g+k+2+Z}(1+w)^{g-X},$$ then we see that $\binom{3g-3-a_1-n_1-Z}{k-a_1-n_1}\binom{g-X}{n_1} = \res_{v,w=0}\frac{V_{g,k}(X,Z;v,w)\D v\D w}{v^{k-a_1-n_1+1}w^{n_1+1}}$, therefore 
	\begin{equation}\label{twovartop}
	(M_{k})_{a_1,n_1}^{a_2,n_2} = \res_{v,w=0}\frac{V_{g,k}(n_2,a_2+n_2;v,w)\D v\D w}{v^{k-a_1-n_1+1}w^{n_1+1}}.
	\end{equation}
	
	Now notice that \begin{equation}\label{master}(1+t)^{g-k-2}(1+2t)^gV_{g,k}(X,Z;-t,\frac{t^2}{1+2t}) = (1+t)^{Z-2X}(1+2t)^X\coloneqq  K(X,Z;t).\end{equation} For $0\le n_2\le a_2 \le k-1$, the values of $(X,Z) = (n_2,n_2+a_2)$ belong to $$\Gamma_k \coloneqq  \{ (X,Z) \in \mathbb{Z}^2 \ | \ X\ge 0, \ 2X\le Z \le X+k-1\}.$$ Notice that for $(X,Z)\in \Gamma_k$, $K(X,Z;t)$ is a polynomial in $t$ of degree $Z-X\le k-1$; therefore, by defining \begin{equation}\label{polydef} p_k(X,Z) \coloneqq  \res_{t=0}\frac{\D t}{t^{k+1}}K(X,Z;t) \end{equation} we find that $p_k(X,Z) = 0$ on $\Gamma_k$. Then, by taking the Taylor series of the left hand side of \eqref{master}, we have $$\res_{t=0} \frac{\D t}{t^{k+1}}(1+t)^{g-k-2}(1+2t)^gV_{g,k}(X,Z;-t,\frac{t^2}{1+2t}) = $$
	$$ = \sum_{0\le l,n \le \infty}\res_{t=0}\frac{(1+t)^{g-k-2}(1+2t)^g}{t^{k+1}}(-t)^l\left(\frac{t^2}{1+2t}\right)^n \res_{v,w=0}\frac{V_{g,k}(X,Z;v,w)\D v \D w}{v^{l+1}w^{n+1}} = $$ \begin{equation}\label{taylor} = \sum_{l+2n\le k}\res_{t=0}\frac{(1+t)^{g-k-2}(1+2t)^{g-n}}{t^{k-l-2n+1}}\res_{v,w=0}\frac{V_{g,k}(X,Z;v,w)\D v \D w}{v^{l+1}w^{n+1}} = p_k(X,Z)\end{equation} the last equality being formula \eqref{master}. Since $p_k(n_2,a_2+n_2) = 0$ for $0\le n_2\le a_2\le k-1$, we conclude thanks to Corollary \eqref{polyev} by substituting $l = k-a_1-n_1$ in the sum \eqref{taylor}.
\end{proof}

From this we can find the solution to Equation \eqref{eq}, and show that such solution is unique.

\begin{theorem}[Lowest defect]\label{topperv}
	The solution to Equation \eqref{eq} is $$\beta^k+uF_{k} = \frac{1}{p_k(g,3g-k-2)}\thres_{t=0}\frac{\emph{d} t}{t^{k+1}}(1+\beta t)^{g-k-2}(1+2\beta t)^ge^{2\eta t\left(\alpha -\frac{4\gamma t}{1+2\beta t}\right)} $$
\end{theorem}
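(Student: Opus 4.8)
The plan is to translate the purely combinatorial content of Proposition~\ref{firstsolution} into a relation among cohomology classes on $Z$, and then to recognize the class thus produced as the residue in the statement. First I would reinterpret the kernel vector $v_k$. By \eqref{ratiomat}, $(M_k)_{a_1,n_1}^{a_2,n_2}$ is the integral $\int_Z\eta^{3g-3-2k}F_{k,a_1,n_1}P_{k,a_2,n_2}$ rescaled by the row factor $(-2)^{k-a_1}/\bigl((k-a_1-n_1)!\,n_1!\bigr)$ and by the column factor $\bigl(\int_Z\eta^{3g-3-2k}\beta^kP_{k,a_2,n_2}\bigr)^{-1}$ (nonzero, as is implicit in \eqref{ratiomat}); hence, multiplying each column relation of $v_kM_k=0$ by $\int_Z\eta^{3g-3-2k}\beta^kP_{k,a_2,n_2}$, Proposition~\ref{firstsolution} becomes $\int_Z\eta^{3g-3-2k}\,G_k\,P_{k,a_2,n_2}=0$ for all $0\le n_2\le a_2\le k-1$, where
\[
G_k\;:=\;\sum_{\substack{0\le n_1\le a_1\\ a_1+n_1\le k}}\frac{(-2)^{k-a_1}\,v_k^{a_1,n_1}}{(k-a_1-n_1)!\,n_1!}\,F_{k,a_1,n_1}\;\in\;H^{4k}(Z).
\]
Since the $P_{k,a_2,n_2}$ run over all monomials in $\alpha,\beta,\gamma,\eta$ of degree $6g-8$ and defect $2k-2$ (for $1\le k\le g-1$ the exponent of $\alpha$ there is automatically nonnegative), and within \eqref{eq} only $P$ of degree $6g-8$ can give a nonzero pairing, this says precisely that $\int_Z\eta^{3g-3-2k}G_kP=0$ for every $P$ with $\mathrm{def}(P)=2k-2$.

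Next I would isolate the $\beta^k$-component of $G_k$. Among the basis classes $F_{k,a_1,n_1}=4^{n_1}\beta^{a_1-n_1}\gamma^{n_1}\eta^{k-a_1}\alpha^{k-a_1-n_1}$, only $F_{k,k,0}=\beta^k$ is not divisible by $\eta$, so $G_k=v_k^{k,0}\beta^k+\eta(\cdots)$, and evaluating the residue formula of Proposition~\ref{firstsolution} at $(a_1,n_1)=(k,0)$ gives $v_k^{k,0}=\res_{t=0}\frac{\D t}{t^{k+1}}(1+t)^{g-k-2}(1+2t)^g=p_k(g,3g-k-2)$ by \eqref{polydef}. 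This is nonzero: for $k\le g-2$ both factors have nonnegative coefficients and the $t^k$-coefficient of their product already contains the summand $\binom{g}{k}2^k>0$, while for $k=g-1$ expanding $(1+t)^{-1}$ gives the value $2^g-1$. Hence $p_k(g,3g-k-2)^{-1}G_k$ is a class of the form $\beta^k+\eta F_k$ satisfying \eqref{eq}, which by the uniqueness in Theorem~\ref{mainbeta} is \emph{the} solution.

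It then remains to identify $G_k$ with $\res_{t=0}\frac{\D t}{t^{k+1}}(1+\beta t)^{g-k-2}(1+2\beta t)^g e^{2\eta t(\alpha-4\gamma t/(1+2\beta t))}$. Expanding the exponential — a finite sum, since $\eta,\alpha,\gamma$ are nilpotent — as $e^{2\eta t(\alpha-4\gamma t/(1+2\beta t))}=\sum_{i,j\ge0}\tfrac{2^i(-8)^j}{i!\,j!}\tfrac{t^{i+2j}}{(1+2\beta t)^j}\eta^{i+j}\alpha^i\gamma^j$, multiplying by $(1+\beta t)^{g-k-2}(1+2\beta t)^g$, and extracting $[t^k]$ expresses the right-hand side as a sum over triples $(i,j,l)$ with $i+2j+l=k$ in which $\beta^l\gamma^j\eta^{i+j}\alpha^i$ occurs with coefficient $\tfrac{2^i(-8)^j}{i!\,j!}\,[t^l]\bigl((1+t)^{g-k-2}(1+2t)^{g-j}\bigr)$. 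On the other side, the substitution $n_1=j$, $a_1=l+j$ (so $k-a_1-n_1=i$) reindexes the defining sum of $G_k$ over the same triples, and $v_k^{a_1,n_1}=(-1)^i[t^l]\bigl((1+t)^{g-k-2}(1+2t)^{g-j}\bigr)$ makes the coefficient of $\beta^l\gamma^j\eta^{i+j}\alpha^i$ in $G_k$ equal to $\tfrac{(-2)^{i+j}4^j}{i!\,j!}(-1)^i[t^l]\bigl((1+t)^{g-k-2}(1+2t)^{g-j}\bigr)$. The elementary identity $(-1)^i(-2)^{i+j}4^j=2^i(-8)^j$ makes the two coefficients agree for every triple, which proves the formula.

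The main difficulty is bookkeeping rather than conceptual: inverting the two rescalings in \eqref{ratiomat} without sign or factorial slips (and checking that the factor divided out of each column really is independent of $(a_1,n_1)$), and aligning the index set $\{(a_1,n_1):0\le n_1\le a_1,\ a_1+n_1\le k\}$ of Proposition~\ref{firstsolution} with the triples $\{(i,j,l):i+2j+l=k\}$ coming from the exponential expansion so that the coefficient comparison in the last step is genuinely term by term.
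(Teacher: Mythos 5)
Your proof is correct and takes essentially the same route as the paper: you undo the normalizations in \eqref{ratiomat} to convert the kernel vector of Proposition~\ref{firstsolution} into the class $\sum \frac{(-2)^{k-a_1}v_k^{a_1,n_1}}{(k-a_1-n_1)!\,n_1!}F_{k,a_1,n_1}$, identify it with the stated residue (the paper resums this into the exponential generating function, while you expand the closed form and match coefficients --- the same computation run in the opposite direction), and divide by the nonzero $\beta^k$-coefficient $p_k(g,3g-k-2)$, whose positivity you verify exactly as the paper does (nonnegative exponents for $g\ge k+2$, value $2^{k+1}-1$ for $g=k+1$).
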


\begin{proof}
	By Proposition \ref{firstsolution} and by (\ref{ratiomat}) we have that setting $$\widetilde{F}_k \coloneqq  \res_{t=0}\sum_{l+2n\le k}\frac{(-1)^{l}(1+t)^{g-k-2}(1+2t)^{g-n}(-2)^{l+n}\D t}{t^{k-l-2n+1}n!l!}\beta^{k-l-2n}(4\gamma)^{n}\eta^{l+n}\alpha^{l}= $$ $$ \res_{t=0}\frac{(1+t)^{g-k-2}(1+2t)^g\beta^k\D t}{t^{k+1}}\sum_{0\le l,n\le \infty} \frac{1}{n!l!}\left(\frac{2t\alpha \eta}{\beta}\right)^l\left(\frac{-8t^2\gamma \eta}{\beta^2(1+2t)}\right)^n  =$$ $$=\res_{t=0}\frac{\D t}{t^{k+1}}(1+\beta t)^{g-k-2}(1+2\beta t)^ge^{2\eta t\left(\alpha -\frac{4\gamma t}{1+2\beta t}\right)} $$
	then we have $\int_Zu^{3g-3-2k}\widetilde{F}_kP=0$ for all $P$ with defect $2k-2$. We see that the coefficient of the term $\beta^k$ in $\widetilde{F}_k$ is $$p_k(g,3g-k-2) = \res_{t=0}\frac{\D t}{t^{k+1}}(1+t)^{g-k-2}(1+2t)^g$$ which is clearly positive for $g\ge k+2$ since all exponents are, while for $g=k+1$ we have ${p_k(k+1,2k+1) = 2^{k+1}-1}$, still positive for $k\ge 1$. Thus we can in any case divide out and obtain the result.
\end{proof}

\begin{remark}\label{pkgenerate}
	Define $D_Z$ and $D_{X,-1}$ to be respectively the operators such that $$D_Zf(Z) = f(Z+1)-f(Z), \ \ D_{X,-1}f(X) = f(X-1)-f(X) $$ for any function $f$. Then we immediately see that $$D_ZK(X,Z;t) = tK(X,Z;t), \ \ D_{X,-1}K(X,Z;t) = \frac{t^2}{1+2t}K(X,Z;t).$$ This means that we can also prove Proposition \ref{firstsolution} by using the Newton interpolation formula $$\sum_{l+2n\le k}(-1)^lD_Z^lD_{X,-1}^n|_{\substack{X=g \\ Z= 3g-k-2}}p_k(X,Z)\binom{3g-3+l-k-Z}{l}\binom{g-X}{n} = p_k(X,Z),$$ which is true since $p_k$ is a polynomial of degree $k$ if we set $\text{deg}Z=1$ and $\text{deg}X=2$. Thus, we find another expression for the vector in the kernel of $M_k^T$, namely $$v_{k}^{a_1,n_1} = (-1)^{k-a_1-n_1}D_Z^{k-a_1-n_1}D_{X,-1}^n|_{\substack{X=g \\ Z= 3g-k-2}}p_k(X,Z), \ \ 0\le n_1\le a_1, \  a_1+n_1\le k.$$ The last formula tells us that the polynomials $p_k(X,Z)$ generate the solution to the equation in Theorem \ref{mainbeta} via subsequent applications of the discrete difference operators $D_Z$ and $D_{X,-1}$ and evaluations at the point $(X,Z) = (g,3g-k-2)$.
\end{remark}

\begin{remark}\label{heatkernel}
	We immediately verify that $$D_{Z,-1}^2K(X,Z;t) = -D_XK(X,Z;t),$$ thus for all $k$ we have \begin{equation}\label{discreteheat}
	D_{Z,-1}^2p_k(X,Z) = -D_Xp_k(X,Z),                                                                                        
	\end{equation}
	which has the shape of a heat equation in which both $X$ and $Z$ are discrete. Actually, it can be shown that for all $k\ge 1$, $p_k(X,Z)$ is the only polynomial solution to Equation \eqref{discreteheat} with initial condition $$p_k(0,Z) = \binom{Z}{k}.$$
\end{remark}

In the following Theorem, we see that the solution found in \ref{topperv} is the only one which solves Equation \eqref{eq} at the top defect.

\begin{theorem}\label{uniqueness}
	The kernel of $M_{k}^T$ is one-dimensional. Therefore, the lowest defect part $F_{k}$ of the solution to the equation in Theorem \ref{mainbeta} is unique.
\end{theorem}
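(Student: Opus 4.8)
The plan is to show that $M_k$ has full column rank, which is equivalent to $\ker M_k^T$ being one-dimensional once we know (from Proposition \ref{firstsolution}) that the kernel is at least one-dimensional; note that $M_k$ has one more row than column, since the $F$-indices $(a_1,n_1)$ with $0\le n_1\le a_1$, $a_1+n_1\le k$ number one more than the $P$-indices $(a_2,n_2)$ with $0\le n_2\le a_2\le k-1$. So the statement reduces to: the linear map $v\mapsto vM_k$ has kernel exactly the line spanned by $v_k$. By Corollary \ref{polyev}, $vM_k = \bigl(p_v(n_2,a_2+n_2)\bigr)_{0\le n_2\le a_2\le k-1}$, where $p_v(X,Z)$ is the polynomial displayed there; here $\deg_X$ of the factor $\binom{g-X}{n_1}$ is $n_1$ and $\deg_Z$ of $\binom{3g-3-a_1-n_1-Z}{k-a_1-n_1}$ is $k-a_1-n_1$, so in the weighting $\deg X = 2$, $\deg Z = 1$ we have $\deg p_v \le k$. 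Thus the question becomes: among polynomials of weighted degree $\le k$ in $(X,Z)$ that arise as $p_v$, which ones vanish on the sample set $\Gamma_k = \{(X,Z)\in\Z^2: X\ge 0,\ 2X\le Z\le X+k-1\}$?

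The first key step is to identify the span of $\{p_v\}$ as $v$ ranges over all admissible vectors. The monomials $\binom{g-X}{n_1}\binom{3g-3-a_1-n_1-Z}{k-a_1-n_1}$ for $(a_1,n_1)$ admissible form, after a triangular change of basis, a basis of the space $\calP_k$ of all polynomials in $(X,Z)$ of weighted degree $\le k$ (with $\deg X=2$, $\deg Z=1$): indeed the set of weighted-degree-$\le k$ monomials $X^a Z^b$ with $2a+b\le k$ is in bijection with pairs $(n_1,k-a_1-n_1)$, i.e. with admissible $(a_1,n_1)$, and the leading terms are triangular. So $v\mapsto p_v$ is a linear isomorphism from the space of admissible vectors onto $\calP_k$, and $\ker(v\mapsto vM_k)$ is carried isomorphically onto $\{p\in\calP_k : p|_{\Gamma_k}=0\}$. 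Hence it suffices to prove that the only $p\in\calP_k$ vanishing on $\Gamma_k$ is (up to scalar) the polynomial $p_k(X,Z)$ defined in \eqref{polydef}; equivalently, that the evaluation map $\calP_k\to \C^{\Gamma_k}$ has one-dimensional kernel, i.e. has rank $|\Gamma_k| = \binom{k+1}{2} = \dim\calP_k - 1$.

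The second key step — and the one I expect to be the main obstacle — is the rank computation of this evaluation map. I would approach it by slicing $\Gamma_k$ along the lines $X = j$ for $j = 0,1,\dots$: on the slice $X=j$ the set $\Gamma_k$ contains the $k - j$ consecutive values $Z = 2j, 2j+1,\dots, j+k-1$ (nonempty for $0\le j\le k-1$), so $|\Gamma_k| = \sum_{j=0}^{k-1}(k-j) = \binom{k+1}{2}$. Restricting a polynomial $p\in\calP_k$ to $X=j$ gives a one-variable polynomial in $Z$ of degree $\le k-2j$; vanishing at the $k-j$ points on that slice forces… here one has to be careful, since $k-j$ may exceed $k-2j+1$. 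The cleanest route is probably to run the argument in the "discrete heat" coordinates of Remark \ref{heatkernel}: the operators $D_Z$ and $D_X$ preserve $\calP_k$-filtration degrees appropriately, and $p_k$ is characterized there as the unique polynomial solution of $D_{Z,-1}^2 p = -D_X p$ with $p(0,Z) = \binom{Z}{k}$. Given any $p\in\calP_k$ vanishing on $\Gamma_k$, I would reconstruct it from the slice $X=0$ by the heat recursion $p(j,Z)$ in terms of $p(j-1,\cdot)$, showing inductively that the vanishing conditions on successive slices pin down $p(0,Z)$ up to one free parameter, namely the coefficient of $\binom{Z}{k}$ — exactly matching $p_k$. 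Once this rank equality is established, $\ker M_k^T$ is one-dimensional, the spanning vector being $v_k$ from Proposition \ref{firstsolution}, and hence the lowest-defect solution $F_k$ of Theorem \ref{mainbeta} is unique, as claimed.
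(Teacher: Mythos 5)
Your reduction is the right one, and it is the same as the paper's: via Corollary \ref{polyev} the left kernel of $M_k$ is identified with the space of polynomials of weighted degree at most $k$ (with $\deg Z=1$, $\deg X=2$) vanishing on $\Gamma_k$, and the task is to show this space is spanned by $p_k$; spelling out that $v\mapsto p_v$ is an isomorphism onto this polynomial space is a useful addition that the paper leaves implicit. However, your framing contains false counts: for $k\ge 3$ the matrix $M_k$ does \emph{not} have one more row than column --- the rows are counted by $\#\{(m,d)\in\Z_{\ge 0}^2 : 2m+d\le k\}$ while the columns number $k(k+1)/2$, e.g.\ $9$ versus $10$ for $k=4$ --- so ``full column rank'' is not the right target, and the evaluation map $\mathcal{P}_k\to\C^{\Gamma_k}$ cannot have rank $|\Gamma_k|$ (given Proposition \ref{firstsolution}, its rank is at most $\dim\mathcal{P}_k-1<|\Gamma_k|$ once $k\ge 3$). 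These slips are not fatal, because you also state the correct target: the kernel of the evaluation map is one-dimensional.

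The genuine gap is that this one-dimensionality is never proved. Your slicing step rests on a false degree bound: the restriction of $p\in\mathcal{P}_k$ to $X=j$ has $Z$-degree up to $k$ (the $X$-free monomials survive), not $\le k-2j$; indeed $p_k(j,Z)$ itself has degree $k$ in $Z$. And the fallback through the discrete heat equation does not work as described: a general $p\in\mathcal{P}_k$ vanishing on $\Gamma_k$ need not satisfy $D_{Z,-1}^2p=-D_Xp$, and a two-variable polynomial is not reconstructed from its slice at $X=0$, so ``pinning down $p(0,Z)$ up to one parameter'' does not pin down $p$. The paper closes exactly this hole with an elementary divided-difference argument: write $p(X,Z)=\sum_{i=0}^{\lfloor k/2\rfloor}p_i(Z)\binom{X}{i}$ with $\deg p_i\le k-2i$; vanishing on the slice $X=0$ forces $p_0=\alpha_0\binom{Z}{k}$; passing to $q_1=(p(X,Z)-p(0,Z))/X$ and evaluating at $X=1$ shows $p_1$ vanishes at $Z=2,\dots,k-1$, hence $p_1=\alpha_1\binom{Z-2}{k-2}$, and iterating yields $p=\sum_i\alpha_i\binom{Z-2i}{k-2i}\binom{X}{i}$; finally the points $(j,k)\in\Gamma_k$, $1\le j\le\lfloor k/2\rfloor$, impose the $\lfloor k/2\rfloor$ independent conditions $\sum_i\alpha_i\binom{j}{i}=0$ on the $\lfloor k/2\rfloor+1$ constants, so the space is at most one-dimensional, and exactly one-dimensional since it contains $p_k$. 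Without this (or an equivalent) interpolation argument, your proposal does not establish the theorem.
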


\begin{proof}
	Thanks to Remark \ref{polyev}, we have to show that by letting $$\Gamma_k =\set{(X,Z)\in \mathbb{Z}^2 \ | \ X \ge 0, \ 2X \le Z \le X+k-1} $$ there exists only one (up to multiplication by a constant) polynomial of degree at most $k$ in $X$ and $Z$ (recall that $\text{deg} Z = 1$ and $\text{deg} X = 2$) which vanishes on $\Gamma_k$.
	
	Letting $p(X,Z)$ be such a polynomial, we can write it as $$p(X,Z) = \sum_{i=0}^{\lfloor k/2\rfloor }p_i(Z)\binom{X}{i}$$ where $\text{deg}( p_i) \le k-2i$. Since $p(0,Z) = p_0(Z)$ vanishes in $Z=0,1,\ldots, k-1$, we see that $p_0(Z) = \alpha_0\binom{Z}{k}$ for some constant $\alpha_0$. Then $$q_1(X,Z) = \frac{1}{X}(p(X,Z)-p(0,Z)) = \sum_{i=1}^{\lfloor k/2\rfloor}\frac{p_i(Z)}{i}\binom{X-1}{i-1}$$ and $q_1(1,Z) = p_1(Z)$ vanishes in the $k-2$ points $2,3,\ldots, k-1$, therefore $p_i(Z) = \alpha_i \binom{Z-2}{k-2}$. Continuing this way, we see that we can write $p(X,Z)$ as $$p(X,Z) = \sum_{i=0}^{\lfloor k/2\rfloor}\alpha_i\binom{Z-2i}{k-2i}\binom{X}{i}$$ for some constant $\alpha_i$. Now, for $ 1\le j \le \lfloor k/2\rfloor $, we have $2j\le k \le j+k-1$, therefore $$0=p(j,k) = \sum_{i=0}^{\lfloor k/2\rfloor}\alpha_i\binom{j}{i}, \ \text{for } 1 \le j \le \lfloor k/2\rfloor .$$ These are $\lfloor k/2\rfloor$ independent conditions on the $\alpha_i$'s, thus the space of polynomials of degree at most $k$ in $Z$ and $X$ which vanish on $\Gamma_k$ is at most one-dimensional. By the way, we know that the polynomial $p_k(X,Z) = \res_{t=0}\frac{\D t}{t^{k+1}}(1+t)^X(1+2t)^{Z-2X}$ satisfies this condition, therefore the space is exactly one-dimensional.
\end{proof}

\subsection{Factorization of $M_k$}

The matrix $M_k$ defined in \eqref{ratiomat} can be conveniently written as a product of two matrices $M_k = Q_kS_k$, where $S_k$ is a matrix with integer entries and $Q_k$ is triangular with $\pm 1$ on the diagonal. This factorization has some striking consequences in itself (see Proposition \ref{pervgrading}), and will be of key importance in Section \ref{betagamma}, where it will be used to give the determinantal criterion for P=W at top defect (see Corollary \ref{isalsoinvertible}).

Define the following matrix:
$$(Q_k)_{a,n}^{b,m}=(-1)^{k+b}e_{b+m-a-n}(3g-2-k,3g-1-k,\ldots, 3g-3-a-n)\cdot$$ \begin{equation}\label{qmatrix}\cdot e_{n-m}(g-n+1,g-n+2,\ldots,g)\end{equation} where $e_i(x_1,\ldots,x_j)$ is the $i$-th elementary symmetric polynomial in $x_1,\ldots,x_j$. Here we adopt the convention that $e_i(x_1,\ldots,x_j)=0$ if $i<0$ and ${e_i(x_1,\ldots,x_j)=1}$ if $0\le j<i$. The indexing of $Q_k$ satisfies $0\le n\le a\le k$, $a+n\le k$ and the same for $b$ and $m$. 

Let also $S_k$ be the matrix defined by \begin{equation}\label{fouriermat}(S_k)_{a_1,n_1}^{a_2,n_2}=n_2^{n_1} (a_2+n_2)^{k-n_1-a_1}\end{equation} where the rows are indexed by the pairs $(a_1,n_1)$ with $0\le n_1\le a_1\le k$, $n_1+a_1\le k$ and the columns by the pairs $(a_2,n_2)$ with $0\le n_2\le a_2\le k-1$. Here we adopt the convention that $0^0=1$ (hence the last row of $S_k$ is a row of $1$'s).

Then a direct computation shows that \begin{equation}\label{factorization}M_k=Q_kS_k\end{equation} and since $Q_k$ is invertible (being upper triangular with $\pm 1$ on the diagonal) we see that $\text{Ker}M_k=\text{Ker}S_k$. In particular, although the entries of $M_k$ are polynomials in $g$, its kernel is generated by vectors of $\mathbb{Z}^{k(k+1)/2}$, thus independent from $g$.

It is easy to write the inverse of $Q_k$ as 
$$(Q_k^{-1})_{a,n}^{b,m}=(-1)^{k+b}h_{b+m-a-n}(3g-2-k,3g-1-k,\ldots,3g-2-b-m) \cdot $$ \begin{equation}\label{inverseqmatrix}\cdot h_{n-m}(g,g-1,\ldots,g-m)\end{equation} 

where $h_r(x_1\ldots,x_n)$ is the \emph{complete symmetric function} of degree $r$, which is the sum of all monomials of total degree $r$ in the variables $x_1,\ldots,x_n$; by convention, $h_r = 0$ for $r<0$. The fact that this is indeed the inverse of $Q_k$ is a direct consequence of the classical identity $$\sum_{r=0}^n(-1)^re_rh_{n-r}=\delta_{0,n},$$ which is true for any number of variables.

With this we can show that, in the case of the classes $\beta^k$, the perverse filtration is actually a grading.

\begin{proposition}\label{pervgrading}
	For all $k\ge 1$, we have $\beta^k\notin P_{2k-1}(H^{4k}(\mathcal{M}))$. 
\end{proposition}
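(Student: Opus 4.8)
Below is the plan I would follow to establish Proposition~\ref{pervgrading}.

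By Proposition~\ref{pwlift}, applied with $j=4k$ and perversity $2k-1$, the assertion $\beta^k\in P_{2k-1}(H^{4k}(\M))$ is equivalent to the existence of a lift $\beta^k+\eta y\in H^{4k}(\cM)$ with $\eta^{3g-3-2k}(\beta^k+\eta y)=0$ --- one power of $\eta$ fewer than what is needed to witness $\beta^k\in P_{2k}$, the latter being (at top defect) the content of Theorems~\ref{mainbeta} and \ref{topperv}. Since $1\le k\le g-1$ forces $3g-3-2k\ge 1$, Poincar\'e duality on $\cM$, the identity $\int_\cM\eta v=\int_Z v$, the surjectivity of $H^*(\cM)\to H^*(Z)$, and $\Sigma$-invariance (Proposition~\ref{rmksigma}) turn this into the solvability, in a $\Sigma$-invariant $y\in H^{4k-2}(Z)$, of
\begin{equation*}
\int_Z\eta^{3g-4-2k}\,(\beta^k+\eta y)\,\rho=0\qquad\text{for every }\Sigma\text{-invariant }\rho\in H^{6g-6}(Z).
\end{equation*}
The plan is to argue by contradiction: I assume such a $y$ exists and derive a contradiction. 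Note that dropping the requirement $y\in P_{2k-2}$ of Proposition~\ref{pwlift} only weakens the hypothesis, so contradicting this weaker solvability suffices.

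First I observe that specialising $\rho$ to $\eta\rho'$, $\rho'\in H^{6g-8}(Z)$, reproduces in particular the defining equation~\eqref{eq} of Theorem~\ref{mainbeta}; hence $\beta^k+\eta y$ solves \eqref{eq}, and by the uniqueness Theorem~\ref{uniqueness} its defect-$2k$ part is forced to be the class $\beta^k+\eta F_k$ of Theorem~\ref{topperv}, whose $\beta^k$-coefficient equals $p_k(g,3g-k-2)\ne0$. Writing $y=F_k+y'$ with $\mathrm{def}(y')>2k-2$, the remaining constraints --- those coming from $\rho\in H^{6g-6}(Z)$ \emph{not} divisible by $\eta$ --- then have to be solved for $y'$. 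Expanding all classes in the monomial bases in $\alpha,\beta,\gamma,\eta$ and evaluating every integral by the residue formula of Corollary~\ref{zresidue} with $Q=-Ay^2/2-Gy^4/4$, exactly as in the proof of Lemma~\ref{explicit} --- the only change being that removing one power of $\eta$ raises the order of the pole at $y=0$ by two --- converts the system into a linear-algebra problem governed by a pairing matrix $\widehat M_k$ whose defect-$2k$ block is $M_k$ (up to the column normalisation of \eqref{ratiomat}) and which acquires extra rows (for the higher-defect monomials of $y'$) and extra columns (indexed by the degree-$(6g-6)$ monomials $\rho$ not divisible by $\eta$). Because the triangular matrix $Q_k$ of \eqref{qmatrix} is read off from the row index set alone, the factorisation \eqref{factorization} persists as $\widehat M_k=Q_k\widehat S_k$ with $\widehat S_k$ an integer matrix extending $S_k$, so $\ker\widehat M_k^{T}=\ker\widehat S_k^{T}$, and one computes that the additional Vandermonde-type columns of $\widehat S_k$ force every vector of $\ker\widehat S_k^{T}$ to have vanishing $\beta^k$-entry. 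This contradicts $p_k(g,3g-k-2)\ne0$, and hence $\beta^k\notin P_{2k-1}(H^{4k}(\M))$.

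The crux, and the step I expect to be the main obstacle, is this final matrix computation: identifying precisely which monomials $\rho$ contribute the new columns, checking via Corollary~\ref{zresidue} that the corresponding matrix entries are again binomial/elementary-symmetric in the indices (so that $Q_k$ continues to factor out and the explicit inverse \eqref{inverseqmatrix} remains available), and then carrying out the rank analysis of $\widehat S_k$ that eliminates the $\beta^k$-direction. A shorter but less self-contained route would invoke the relative and curious Hard Lefschetz theorems together with the non-vanishing of $\beta^k$ in $H^{4k}(\M)$, as in \cite{decataldo-hausel-migliorini}; but within the enumerative framework developed here the residue machinery of this section should make the matrix argument above go through directly.
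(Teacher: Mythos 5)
Your overall strategy coincides with the paper's: use Proposition~\ref{pwlift} to convert $\beta^k\in P_{2k-1}(H^{4k}(\M))$ into the solvability of $\int_Z\eta^{3g-4-2k}(\beta^k+\eta y)\rho=0$ for all $\rho$, restrict to the top-defect columns, factor the resulting pairing matrix through $Q_k$, and kill the $\beta^k$-direction by a kernel analysis of the extended integer matrix. However, the decisive step --- which you yourself flag as ``the main obstacle'' and leave as a plan --- is precisely the entire content of the proof, and it is missing. Concretely, what has to be checked is: (i) that the pairing $(F,P)\mapsto\int_Z\eta^{3g-4-2k}(\eta F)P$, with rows the defect-$(2k-2)$ monomials $F=\beta^{a_1-n_1}(4\gamma)^{n_1}\eta^{k-1-a_1}\alpha^{k-a_1-n_1}$ together with one extra row for $\beta^k$, and columns the defect-$2k$, degree-$(6g-6)$ monomials $P=\beta^{a_2-n_2}(4\gamma)^{n_2}\eta^{k-a_2}\alpha^{3g-3-k-a_2-n_2}$, $0\le n_2\le a_2\le k$, has entries given by \emph{the same} binomial formula as in Lemma~\ref{explicit} --- the loss of one power of $\eta$ is exactly offset by the shift in the $\eta$-exponents of the monomial bases, so all total exponents of $\alpha,\beta,\gamma,\eta$ in the integrand are unchanged --- whence the factorization \eqref{factorization} persists verbatim as $\widetilde{M}_k=Q_k\widetilde{S}_k$, with $\widetilde{S}_k$ equal to $S_k$ with the columns $a_2=k$ adjoined; and (ii) that the kernel then dies: $\ker S_k^T$ is one-dimensional, spanned by the coefficient vector of $p_k(X,Z)$ (Theorem~\ref{uniqueness} via Corollary~\ref{polyev}), and the new column $(a_2,n_2)=(k,0)$ evaluates this vector to $p_k(0,k)=1\neq 0$, so $\ker\widetilde{S}_k^T=0$, hence $\ker\widetilde{M}_k^T=0$ and no top-defect relation involving $\beta^k$ with nonzero coefficient can exist. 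Your assertion that the extra ``Vandermonde-type columns force every kernel vector to have vanishing $\beta^k$-entry'' is the correct statement, but it is asserted rather than proved, and without (i) and (ii) the argument does not close.

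Two further remarks. Your preliminary reduction --- specializing $\rho=\eta\rho'$, invoking Theorem~\ref{uniqueness} to force the lowest-defect part of $y$ to be $F_k$, and then adding rows for the higher-defect monomials of $y'$ --- is superfluous: by the pole-order count following Corollary~\ref{zresidue}, any part of $y$ of defect larger than $2k-2$ pairs to zero against the defect-$2k$ columns, so one may work with the single top-defect block $\widetilde{M}_k$ directly; its left kernel being zero already rules out \emph{any} solution, with no need to normalize the $\beta^k$-coefficient via Theorem~\ref{topperv}. Finally, the alternative route you mention through relative and curious Hard Lefschetz as in \cite{decataldo-hausel-migliorini} would defeat the purpose here, since the point of this section is to obtain the statement purely from the enumerative data.
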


\begin{proof}
	Following Proposition \ref{pwlift} and Theorem \ref{enumerativepw}, we write the equation $$\int_Z\eta^{3g-4-2k}(\beta^k+\eta F) P =0 $$
	which must be in particular valid for all $P$ with $\text{def}(P)=2k$. We thus define $\widetilde{M}_k$ to be the matrix of the pairing $(F,P)\mapsto \int_Z\eta^{3g-4-2k}(\eta F)P$ with $\text{def}(F)=2k-2$, $\text{def}(P) = 2k$, and with an extra row corresponding to $\beta^k$. Then it is easy to see that if we define \begin{equation}(\widetilde{S}_k)_{a_1,n_1}^{a_2,n_2}=n_2^{n_1} (a_2+n_2)^{k-n_1-a_1}\end{equation} where the rows are indexed by the pairs $(a_1,n_1)$ with $0\le n_1\le a_1\le k$, $n_1+a_1\le k$ and the columns by the pairs $(a_2,n_2)$ with $0\le n_2\le a_2\le k$, then $$Q_k\widetilde{S}_k = \widetilde{M}_k$$ with notations as in \eqref{qmatrix}. Since $\widetilde{S}_k$ is obtained by $S_k$ by adding some columns, and since $\text{Ker}S_k^T$ is one-dimensional generated by the coefficients of $$p_k(X,Z) = \res_{t=0}\frac{\D t}{t^{k+1}}(1+t)^{Z-2X}(1+2t)^X,$$ then $\text{Ker}\widetilde{S}_k^T = (0)$ if and only if there exist values of $0\le n_2\le a_2 \le k$ such that ${p_k(n_2,a_2+n_2) \neq 0}$. Since $p_k(0,k) = 1$, we can conclude.
\end{proof}

\subsection{The classes $\beta^{k-h}\gamma^h$}\label{betagamma}

We now consider the general case of the extension problem, namely for $1\le k \le g-1$ and  $0\le h \le k$, \begin{equation}\label{general} \int_Z \eta^{3g-3-2k}(\beta^{k-h}(4\gamma)^h + \eta F)P = 0
\end{equation}
where we ask for the existence of some $F\in \C[\alpha,\beta,\gamma,\eta]$ such that (\ref{general}) is satisfied for all $P\in \C[\alpha,\beta,\gamma,\eta]$.

We shall still consider only the top-defect part of the pairing, namely we look for an $F$ with ${\text{def}(F) = 2k-2}$ such that Equation (\ref{general}) is satisfied for all $P$ with $\text{def}(P) = 2k-2$. Therefore, we see that $F$ is a sum of monomials of the form $$F_{k,h,a_1,n_1}\coloneqq \beta^{a_1-n_1}(4\gamma)^{n_1}\eta^{k-1-a_1}\alpha^{k+h-a_1-n_1}$$ for $ 0\le n_1\le a_1\le k-1,$ and $ a_1+n_1\le k+h.$ Analogously, $P$ is a sum of monomials of the form $$P_{k,h,a_2,n_2}\coloneqq \beta^{a_2-n_2}(4\gamma)^{n_2}\eta^{k-1-a_2}\alpha^{3g-3-k-h-a_2-n_2},$$ for $  0\le n_2\le a_2\le k-1$ and $ a_2+n_2 \le 3g-3-h-k.$ Notice that if we consider the general case $g\ge k+1$, the condition on $a_2+n_2$ is not redundant. However, from $n_2\le a_2\le k-1$ we can deduce $a_2+n_2\le 2k-2$, so the condition on $a_2+n_2$ becomes redundant if $2k-2\le 3g-3-h-k$, that is \begin{equation}\label{redundancy} 3g\ge 3k+h+1.\end{equation} Fixing values of $0\le h\le k$, we call the range of values of $g$ which satisfy \eqref{redundancy} the \emph{redundancy range}. The "smallest" case in which $g\ge k+1$ falls out the redundancy range is $k=h=3$, $g=4$.

By Proposition \ref{pwlift}, the top-defect part of the statement of the Enumerative P=W Conjecture in the redundancy range is the following.

\begin{conjecture}\label{P=Wgamma}
	Let $k\ge 1$ and $0\le h \le k$ be integers. In the redundancy range, there exists a unique defect-homogeneous $F\in H^*(Z)$  with $\emph{def}(F) = 2k-2$ such that for every defect-homogeneous $P\in H^*(Z)$  with $\emph{def}(P) = 2k-2$, Equation (\ref{general}) is satisfied.
\end{conjecture}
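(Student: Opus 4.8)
The approach is to reduce Conjecture~\ref{P=Wgamma} to a purely combinatorial assertion about the non-vanishing of a determinant, exactly as was done for the $\beta^k$ classes in Section~4.1. The key point is that, by Corollary~\ref{zresidue} and the substitution $\widetilde G = Gy^2$ used in the proof of Lemma~\ref{explicit}, the pairings $\int_Z \eta^{3g-3-2k} F_{k,h,a_1,n_1} P_{k,h,a_2,n_2}$ are again explicitly computable: the tanh-factor contributes a power of $(-A/2-\widetilde G/2)$ and $\widetilde R_{g,k}(A,\widetilde G;0) = 2^{2k-1-g}(A+\widetilde G)^{2g-2k-2}(A+2\widetilde G)^g$, so after the change of variables $B = A+\widetilde G$ the integral becomes a product of a derivative-count and a single binomial coefficient $\binom{g}{n_1+n_2}$, up to explicit scalars. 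Thus the analogue of Lemma~\ref{explicit} will express the entries of the pairing matrix $M_{k,h}$ as products of two binomial coefficients, with the shift $h$ entering only through the exponents; concretely I expect
\[
(M_{k,h})_{a_1,n_1}^{a_2,n_2} = \binom{3g-3-a_1-n_1-a_2-n_2}{k-1+h-a_1-n_1}\binom{g-n_2}{n_1},
\]
after the same column normalization as in \eqref{ratiomat}.

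**Reduction to a polynomial evaluation problem.** Once the matrix entries are in this form, I would run the same factorization $M_{k,h} = Q_{k,h} S_{k,h}$ as in Section~4.2, with $Q_{k,h}$ upper-triangular with $\pm 1$ on the diagonal (so invertible), reducing the kernel computation to that of $S_{k,h}$, a matrix with entries $n_2^{n_1}(a_2+n_2)^{\text{exponent}}$ independent of $g$. Via Corollary~\ref{polyev}'s analogue, a left-kernel vector of $M_{k,h}$ corresponds to a bivariate polynomial $p(X,Z)$ of bidegree $\le k+h$ (with $\deg Z = 1$, $\deg X = 2$) vanishing on the lattice region $\Gamma_k = \set{(X,Z)\in\Z^2 \mid X\ge 0,\ 2X\le Z\le X+k-1}$. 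For the \emph{existence} part I would exhibit the explicit solution: by analogy with Theorem~\ref{topperv}, the candidate extension should be
\[
\beta^{k-h}(4\gamma)^h + \eta F = \frac{c}{p(g,3g-k-2)} \thres_{t=0} \frac{\D t}{t^{k+1}}\,(1+\beta t)^{g-k-2}(1+2\beta t)^g\,\gamma^h\,(\text{correction})\,e^{2\eta t(\alpha - \frac{4\gamma t}{1+2\beta t})},
\]
whose $\beta$-leading coefficient is the value of the relevant $p$ at a specific point, which must be shown nonzero; and for \emph{uniqueness} I would adapt the argument of Theorem~\ref{uniqueness}, peeling off the polynomial $p(X,Z) = \sum_i p_i(Z)\binom{X}{i}$ column by column using that $p_0$ vanishes at $Z = 0,\dots,k-1$, etc., arriving at a finite linear system in the coefficients $\alpha_i$ whose rank I must show is maximal.

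**The main obstacle.** The hard part will be the uniqueness/rank statement. For $h=0$ the vanishing locus $\Gamma_k$ was \emph{exactly} calibrated to the bidegree $k$ of $p$, so that the interpolation peeling left precisely $\lfloor k/2\rfloor$ independent conditions and the kernel was forced to be one-dimensional. For general $h$ the polynomials $F$ and $P$ have bidegree up to $k+h$ while $\Gamma_k$ is unchanged, so $\Gamma_k$ no longer "sees" enough of the polynomial: the naive peeling will leave a larger system, and proving that it still has corank exactly one (equivalently, that $\dim\ker M_{k,h}^T = 1$) is genuinely a new combinatorial input. I expect this is precisely the determinantal non-vanishing referred to in the introduction (``in Theorem~\ref{ifinvertible}, we show that \dots\ amounts to proving the non-vanishing of a particular polynomial determinant; we finally prove this nonvanishing in the case $h=1$''); so the realistic target is to prove the existence part unconditionally, reduce uniqueness to an explicit determinant $\det S_{k,h}'$ (a square submatrix of $S_{k,h}$) being nonzero, verify this for $h\le 1$ directly — where the extra conditions from the redundancy-range bound $a_2+n_2\le 2k-2$ can be shown to close the gap — and leave the general $h$ as the remaining case, exactly as the paper's outline indicates.
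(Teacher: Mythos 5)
Your overall plan does follow the paper's strategy (explicit residue computation of the pairing matrix, translation of left-kernel vectors into polynomials vanishing on $\Gamma_k$, reduction to a determinantal non-vanishing, verification for small $h$), but it contains a structural error at the decisive step, and it overstates what can be proved. First, the claim that one can ``run the same factorization $M_{k,h}=Q_{k,h}S_{k,h}$ with $Q_{k,h}$ upper-triangular with $\pm1$ on the diagonal, reducing the kernel computation to that of $S_{k,h}$'' fails for $h\ge 1$: the rows of $M_{k,h}$ (monomials of defect $2k-2$ in degree $4k+2h-2$, plus the single row for $\beta^{k-h}(4\gamma)^h$) do not match the row set of the Vandermonde-type matrix $S_{k,h}$, whose rows run over all $(a_1,n_1)$ with $a_1+n_1\le k+h$. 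In the paper one must pad $M_{k,h}$ with zero rows and replace the missing rows of the triangular $Q_{k+h}$ by vectors of $\ker S_{k,h}^T$, producing $\widetilde{Q}_{k,h}$; and the crux is that $\dim\ker S_{k,h}^T=(h+1)(h+2)/2>1$ for $h\ge1$ (Lemma \ref{kerneldimension}), so one-dimensionality of $\ker M_{k,h}^T$ is \emph{not} inherited from $S_{k,h}$ by an automatic triangular change of basis. Whether exactly one kernel direction survives is precisely the invertibility of $\widetilde{Q}_{k,h}$, which by Lemma \ref{lemmadet} is the non-vanishing of the determinant $W_{k,h}(g,3g-k-h-2)$ of \eqref{ourdet}; this is Conjecture \ref{whatwewant}, established in the paper only for $h=0$, $h=1$ (Proposition \ref{h=1}), and $g\ge k+h+2$ (Remark \ref{whatwehave}). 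Your proposal never supplies this input, and the counting you sketch (``peeling'' $p(X,Z)=\sum_i p_i(Z)\binom{X}{i}$ against $\Gamma_k$) only reproduces the lower bound $\dim\ker S_{k,h}^T\ge (h+1)(h+2)/2$, not the corank-one statement for $M_{k,h}$ itself.

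Second, the claim that the \emph{existence} half can be proved unconditionally by an explicit residue formula with a $\gamma^h$ factor and an unspecified ``correction'' is not substantiated, and in the paper existence is just as conditional as uniqueness: Theorem \ref{ifinvertible} derives the existence of the lift from the rank equality $\operatorname{rk}\widetilde{M}_{k,h}=\operatorname{rk}\widehat{M}_{k,h}$, which again requires invertibility of $\widetilde{Q}_{k,h}$. (Note also that the statement you are addressing is labelled a Conjecture in the paper: it is not proved there in general, so a complete blind proof would have had to contain genuinely new combinatorics beyond the $h\le1$ determinant estimate.) A minor point in the same direction: your guessed entry $\binom{3g-3-a_1-n_1-a_2-n_2}{\,k-1+h-a_1-n_1\,}\binom{g-n_2}{n_1}$ is off by one in the lower index relative to \eqref{gammamatrix}; with your normalization the row $(a_1,n_1)=(k,h)$ corresponding to $\beta^{k-h}(4\gamma)^h$ would vanish identically, so the extension problem would be trivialized incorrectly.
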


The case $h=0$ was proved in last section. We will give an equivalent statement for Conjecture \ref{P=Wgamma} in terms of the non-vanishing of a particular determinant involving the polynomials $p_k(X,Z)$ defined in the previous section.

Let \begin{equation}\label{gammamatrix} (M_{k,h})_{a_1,n_1}^{a_2,n_2} =  \binom{3g-3-a_1-n_1-a_2-n_2}{k+h-a_1-n_1}\binom{g-n_2}{n_1}\end{equation} for $0\le n_1\le a_1 \le k-1$ or $(n_1,a_1) = (h,k)$, and $0\le n_2\le a_2\le k-1$.

In the redundancy range, the matrix of the pairing $$(F,P)\mapsto \int_Z\eta^{3g-3-2k}(\eta F)P$$ for defect-homogeneous classes $F$ and $P$ with $\text{def}(F) = \text{def}(P) = 2k-2$, is $M_{k,h}$  without the last row $(n_1,a_1) = (h,k)$. Such row corresponds to multiplication with $\beta^{k-h}(4\gamma)^h$, therefore Conjecture \ref{P=Wgamma} is equivalent to stating that in the redundancy range, $\text{Ker}(M_{k,h}^T)$ is one-dimensional, and the entries of its vectors corresponding to the class $\beta^{k-h}\gamma^h$ are nonzero.

Notice that $M_{k,h}$ is a submatrix of $M_{k+h}$ of Definition \ref{ratiomat}. The number of columns of $M_{k,h}$ is  $$c_{k,h} = \frac{k(k+1)}{2}.$$ while the number of rows is \begin{equation}\label{rownumber} r_{k,h}=c_{k,h}-\left\lfloor \frac{(k-h+1)(k-h-3)}{4}\right\rfloor=c_{k+h,0}+\frac{h(h+3)}{2}.\end{equation}
In particular, if $h = k-3$, $M_{k,h}$ is a square matrix, if $k\ge h\ge k-2$ then $r_{k,h} = c_{k,h}+1$ and if $h\le k-4$ the number of rows is strictly smaller than the number of columns.

\begin{definition}
	For $0\le h \le k$, let $S_{k,h}$ be the matrix defined as $$(S_{k,h})_{a_1,n_1}^{a_2,n_2} = (a_2+n_2)^{k+h-a_1-n_1}n_2^{n_1}$$ with $0\le n_2\le a_2\le k-1$ and $0\le n_1\le a_1 \le k+h$, $a_1+n_1 \le k+h$; here we are using the convention $0^n = 0$ for all $n$ except $0^0=1$.
	
\end{definition}

\begin{lemma}\label{kerneldimension}
	The dimension of $\emph{Ker}(S_{k,h}^T)$ is $(h+1)(h+2)/2$. A basis of the kernel is given by the coefficients of the polynomials $$Z^{j}p_{k+h-i}(X,Z), \ \ 0\le j\le i\le h$$ where $p_k$ is the polynomial defined in \eqref{polydef}.
\end{lemma}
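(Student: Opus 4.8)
The plan is to translate the whole statement into a question about polynomials in two variables, exactly as in the proof of Theorem~\ref{uniqueness}. A left null vector $v=(v_{a_1,n_1})$ of $S_{k,h}$ corresponds to the polynomial $q_v(X,Z)=\sum_{a_1,n_1}v_{a_1,n_1}X^{n_1}Z^{k+h-a_1-n_1}$, and the indexing conditions $0\le n_1\le a_1\le k+h$, $a_1+n_1\le k+h$ say precisely that $q_v$ ranges over the space of all polynomials of weighted degree $\le k+h$, where $X$ has weight $2$ and $Z$ has weight $1$. Since the columns $(a_2,n_2)$ with $0\le n_2\le a_2\le k-1$ are in bijection, via $(X,Z)=(n_2,a_2+n_2)$, with the lattice set $\Gamma_k=\{(X,Z)\in\Z^2:X\ge 0,\ 2X\le Z\le X+k-1\}$ (the convention $0^0=1$ handling the boundary entries correctly), the condition $v\in\mathrm{Ker}(S_{k,h}^T)$ becomes the vanishing of $q_v$ on $\Gamma_k$. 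Hence it suffices to show that $K_{k,h}:=\{q\in\C[X,Z]:\ \mathrm{wdeg}(q)\le k+h,\ q|_{\Gamma_k}=0\}$ has dimension $(h+1)(h+2)/2$ with basis the $(h+1)(h+2)/2$ polynomials $Z^jp_{k+h-i}(X,Z)$, $0\le j\le i\le h$, with $p_m$ as in \eqref{polydef}. Each of these does lie in $K_{k,h}$: its weighted degree is $j+(k+h-i)\le k+h$ since $j\le i$, and it vanishes on $\Gamma_k$ because $p_m$ vanishes on $\Gamma_m$ (the computation around \eqref{polydef}) and $\Gamma_k\subseteq\Gamma_{k+h-i}$ for $i\le h$. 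So it remains to prove (i) $\dim K_{k,h}\le (h+1)(h+2)/2$, and (ii) these polynomials are linearly independent; together they give that they form a basis.

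For (i) I induct on $h$. The case $h=0$ is Theorem~\ref{uniqueness} (the space of polynomials of weighted degree $\le k$ vanishing on $\Gamma_k$ is one-dimensional; for $k=0$ it is the line of constants, $\Gamma_0=\emptyset$). For $h\ge 1$: given $q\in K_{k,h}$, the restriction $q(0,Z)$ is a polynomial of degree $\le k+h$ in $Z$ vanishing at $Z=0,1,\dots,k-1$, and $\{p_{k+i}(0,Z)=\binom{Z}{k+i}\}_{i=0}^{h}$ is a basis of that $(h+1)$-dimensional space; let $\sum_{i=0}^h c_i p_{k+i}$ be the combination with $q':=q-\sum_i c_i p_{k+i}$ satisfying $q'(0,\cdot)=0$. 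Then $q'=Xq_1'$ with $\mathrm{wdeg}(q_1')\le k+h-2$, and since $q'\in K_{k,h}$ vanishes on $\Gamma_k$ while $X\ne 0$ on $\Gamma_k\setminus\{X=0\}$, the polynomial $q_1'$ vanishes on $\Gamma_k\cap\{X\ge 1\}=\{(X'{+}1,Z'{+}2):(X',Z')\in\Gamma_{k-1}\}$; hence $\widetilde{q}_1'(X,Z):=q_1'(X{+}1,Z{+}2)$ lies in $K_{k-1,h-1}$. The map $q\mapsto\bigl((c_i)_{i=0}^h,\widetilde{q}_1'\bigr)$ is linear and injective — one recovers $q=\sum_i c_i p_{k+i}+X\,\widetilde{q}_1'(X{-}1,Z{-}2)$ — so $\dim K_{k,h}\le (h+1)+\dim K_{k-1,h-1}\le (h+1)+\tfrac{h(h+1)}{2}=\tfrac{(h+1)(h+2)}{2}$ by the inductive hypothesis (note $k-1\ge h-1\ge 0$). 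It is essential that the subtraction of $\sum_i c_i p_{k+i}$ be performed before dividing by $X$, since only then does $\widetilde{q}_1'$ genuinely land in $K_{k-1,h-1}$.

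For (ii) I pass to leading weighted-homogeneous parts. Rescaling $X\mapsto X/\epsilon^2$, $Z\mapsto Z/\epsilon$, $t\mapsto \epsilon t$ in the generating function $\sum_m p_m(X,Z)t^m=K(X,Z;t)$ and letting $\epsilon\to 0$ shows that $p_m$ has weighted degree exactly $m$, with top part $\mathrm{gr}(p_m)=[t^m]\,e^{Zt-Xt^2}=\sum_{\ell\ge 0}\tfrac{(-X)^\ell Z^{m-2\ell}}{\ell!\,(m-2\ell)!}$. Thus a relation $\sum_{i,j}c_{ij}Z^jp_{k+h-i}=0$ forces, in each weighted degree $D$ with $k\le D\le k+h$, a relation $\sum_{j=0}^{D-k}b_j Z^j\,\mathrm{gr}(p_{D-j})=0$ among the degree-$D$ top parts; reading off coefficients of the monomials $X^\ell Z^{D-2\ell}$ turns this into $[t^s]\bigl(B(t)e^{t}\bigr)=0$ for all $s\equiv D\pmod 2$ with $0\le s\le D$, where $B(t)=\sum_{j=0}^{D-k}b_j t^j$. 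Taking the even (resp. odd) part according to the parity of $D$ gives $B(t)e^{2t}+(-1)^DB(-t)=O(t^{D+2})$; comparing coefficients of $t^n$ for $D-k+1\le n\le D+1$, where $B(-t)$ no longer contributes, yields $\sum_{j=0}^{D-k}b_j 2^{-j}(n)_j=0$ at these $k+1$ consecutive integers, with $(n)_j=n(n-1)\cdots(n-j+1)$. This is a polynomial in $n$ of degree $\le D-k$; since $D\le k+h\le 2k$ by the hypothesis $h\le k$, its degree is $\le k$, so vanishing at $k+1$ points forces all $b_j=0$, and therefore all $c_{ij}=0$. This finishes (ii), and with (i) the lemma follows.

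I expect (ii) to be the main obstacle. The $p_m$ are not weighted-homogeneous, so a bare "distinct leading terms" argument is unavailable, and even after replacing $p_m$ by $\mathrm{gr}(p_m)$ the linear independence of the $Z^j\mathrm{gr}(p_{D-j})$ within one weighted degree is not transparent; it is forced only by the generating-function manipulation, and the whole computation closes precisely because $h\le k$ makes $D-k\le k$, which is exactly what lets a polynomial of degree $\le D-k$ be pinned down by $k+1$ consecutive values. The other point to handle with care is the inductive embedding in (i), where the order of operations (subtract, then divide by $X$) matters.
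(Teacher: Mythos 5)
Your proof is correct, but it reaches both halves of the lemma by routes genuinely different from the paper's. The translation of $\mathrm{Ker}(S_{k,h}^T)$ into polynomials of weighted degree $\le k+h$ vanishing on $\Gamma_k$ is the same starting point as in the paper, but after that the arguments diverge. For the upper bound $\dim K_{k,h}\le (h+1)(h+2)/2$, the paper expands a putative element as $\sum_i p_i(Z)\binom{X}{i}$, determines the shape of each $p_i$ from the vanishing on the column $X=0$ and its shifts, and then imposes the remaining vanishing conditions at points $(l,k+j)$, ending with an explicit (and admittedly tedious) count; your inductive peeling — subtract the unique combination $\sum_i c_i p_{k+i}$ matching $q(0,Z)$, divide by $X$, shift by $(1,2)$ to land in $K_{k-1,h-1}$ — is cleaner and packages the same information recursively, with the correct care that the subtraction precede the division. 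For linear independence, the paper replaces $Z^j p_{k+h-i}$ by $\binom{Z-h-k+j}{j}p_{k+h-i}$ (an equivalent triangular change of basis) and exhibits a triangular evaluation matrix $T_h$ at the lattice points $(h-b,h+k-b+a)$ with powers of $2$ on the diagonal — a short, concrete verification; you instead pass to weighted leading parts via the symbol $\mathrm{gr}(p_m)=[t^m]e^{Zt-Xt^2}$ and rule out relations degree by degree through the identity $B(t)e^{2t}+(-1)^DB(-t)=O(t^{D+2})$ plus a vanishing-at-$k+1$-consecutive-integers argument, which is where the hypothesis $h\le k$ enters explicitly. Two small points to tighten: the phrase ``in each weighted degree $D$'' should be ``in the top weighted degree occurring in the relation'' (lower-order parts of higher-degree members pollute the degree-$D$ component otherwise — the fix is the standard one and your fixed-$D$ independence claim is exactly what makes it work), and the $\epsilon$-rescaling derivation of $\mathrm{gr}(p_m)$ is informal as written; it becomes rigorous by writing $\log K=Z\log(1+t)+X\bigl(\log(1+2t)-2\log(1+t)\bigr)$ and noting the second series is $-t^2+O(t^3)$, which gives both $\mathrm{wdeg}(p_m)\le m$ and the stated top part. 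Neither issue affects correctness.
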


\begin{proof}
	Let $v= (v_{a_1,n_1})_{a_1,n_1}$ be a vector in $\text{Ker}(S_{k,h}^T)$. This means that the polynomial  $$p_v(X,Z) = \sum_{a_1,n_1}v_{a_1,n_1}X^{n_1}Z^{k+h-a_1-n_1}$$ vanishes at integers $X$ and $Z$ with $X\ge 0$ and $2X\le Z \le X+k-1$. All polynomials of the form $$Z^{j}p_{k+h-i}(X,Z), \ \ 0\le j\le i\le h$$ where $p_k$ is the polynomial defined in \eqref{polydef}, satisfy this condition. The number of such polynomials is $(h+1)(h+2)/2$, so let us show they are linearly independent. Since the case $h=0$ is the content of Theorem \ref{uniqueness}, suppose $k\ge h\ge 1$.

	We will prove the equivalent statement that the polynomials $$p_{i,j}(X,Z)\coloneqq \binom{Z-h-k+j}{j}p_{k+h-i}(X,Z), \ \ 0\le j\le i\le h$$ are linearly independent. To show this, simply form the matrix $$(T_h)_{0\le j \le i \le h}^{0\le a \le b \le h} = p_{i,j}(h-b,h+k-b+a) $$ and notice that, up to rearranging rows and columns, $T_h$ is a triangular matrix with powers of two as diagonal entries, thus it has nonzero determinant.
	We deduce that $$\text{dim}\text{Ker}S_{k,h}^T\ge (h+1)(h+2)/2.$$
	
	Conversely, assume $p(X,Z)$ is a polynomial of the form $$p(X,Z) = \sum_{\substack{0\le n \le a \\ n+a\le k+h}}\lambda_{n,a}X^nZ^{k+h-a-n}$$ which vanishes on $$\Gamma_k = \set{(X,Z)\in \Z^2 \ | \ 0\le X\le k-1, 2X\le Z \le X+k-1}.$$ If we define $\text{deg}Z=1$ and $\text{deg}X=2$, then $p(X,Z)$ is a polynomial of degree $k+h$. By changing basis, we can write it in the following way $$p(X,Z) = \sum_{i=0}^{\lfloor (k+h)/2\rfloor}p_i(Z)\binom{X}{i} $$ where $\text{deg}(p_i) = k+h-2i$. We see that $p_0(Z) = p(0,Z)$ vanishes on $Z=0,\ldots, k-1$, thus we must have $$p_0(Z) = \sum_{j=0}^h \alpha_{0,j}\binom{Z}{k+j} $$ for some constants $\alpha_{0,j}$. Then we consider $$q_1(X,Z) \coloneqq  \frac{1}{X}(p(X,Z)-p(0,Z)) = \sum_{i=1}^{\lfloor (k+h)/2\rfloor }\frac{p_i(Z)}{i}\binom{X-1}{i-1} $$ and we see that $p_1(Z) = q_1(0,Z)$ vanishes on $Z = 2, \ldots, k-1$, so that $$p_1(Z) = \sum_{j=0}^h \alpha_{1,j}\binom{Z-2}{k-2+j}.$$ We can continue this way up to $i=[k/2]$, deducing that $$p_i(Z) = \sum_{j=0}^h\alpha_{i,j}\binom{Z-2i}{k-2i+j}, \ \text{ for } i \le \lfloor k/2\rfloor .$$
	
	For $\lfloor k/2\rfloor < i \le \lfloor (k+h)/2\rfloor$, we do not have any conditions on the vanishing of the polynomials $p_i(Z)$, but we can write anyway $$p_i(Z) = \sum_{j= 2i-k }^{h}\alpha_{i,j}\binom{Z-2i}{k-2i+j}, \ \text{ for } \lfloor k/2 \rfloor \le i \le \lfloor (k+h)/2 \rfloor $$
	which is a general polynomial in $Z$ of degree $k+h-2i$. Putting everything together, we can write $$p(X,Z)=\sum_{i=0}^{\lfloor (k+h)/2 \rfloor}\sum_{j=\text{max}(0,2i-k)}^{h}\alpha_{i,j}\binom{Z-2i}{k-2i+j}\binom{X}{i}$$ and $ p(X,Z)$ vanishes on $0\le X \le \lfloor k/2\rfloor$, $2X \le Z \le k-1$. The vector space of such polynomials has dimension $$d_{k,h} = (\lfloor k/2 \rfloor + 1)(h+1) + \sum_{i=\lfloor k/2\rfloor + 1}^{\lfloor (k+h)/2\rfloor }(h+k-2i+1).$$ 
	
	We now impose conditions for $p(X,Z)$ to vanish on other points of $\Gamma_k$. Similarly to the proof of Theorem \ref{uniqueness}, setting $p(l,k) = 0$ for $1\le l \le \lfloor k/2 \rfloor$, we obtain $$\sum_{i=0}^{l} \binom{l}{i}\alpha_{i,0} = p(l,k) = 0 \ \text{ for } 1\le l \le \lfloor k/2 \rfloor,$$ which are $\lfloor k/2 \rfloor$ independent conditions on the $\alpha_{i,0}$ for $1 \le i \le \lfloor k/2 \rfloor $. Analogously, we have $p(l,k+1)=0 $ for $2\le l \le \lfloor (k+1)/2\rfloor$, which gives $$\sum_{i=0}^l\binom{l}{i}\left((k+1)\alpha_{i,0} + \alpha_{i,1} \right) = 0, \ \text{ for } 2 \le l \le \lfloor (k+1)/2 \rfloor .$$ With this we find $\lfloor (k+1)/2\rfloor-1$ independent conditions  on the $\alpha_{i,1}$'s for $2\le i \le \lfloor (k+1)/2 \rfloor$, once the $\alpha_{i,0}$ are chosen. Continuing this way, we obtain $\lfloor (k+j)/2 \rfloor - j$ independent linear conditions on the $\alpha_{i,j}$'s for all $0\le j \le h$. We conclude that $$\text{dim}\text{Ker}S_{k,h}^T\le d_{k,h} - \sum_{j=0}^h(\lfloor (k+j)/2 \rfloor - j) = (h+1)(h+2)/2 $$ where the last equality is tedious but straightforward to verify, thus completing the proof. \end{proof}

\begin{definition}
	Let $\{ v_1,\ldots, v_{(h+1)(h+2)/2}\}$ be independent vectors of $\text{Ker}(S_{k,h}^T)$. We define $\widetilde{Q}_{k,h}$ as the matrix obtained by replacing the last $(h+1)(h+2)/2$ rows of the matrix $Q_{k+h}(g,3g-3)$ defined in (\ref{qmatrix}) by the vectors $v_i$. 
\end{definition}

With the same method used to obtain \eqref{factorization}, we can show that $\widetilde{Q}_{k,h}$ satisfies  \begin{equation}\label{tildeq}\widetilde{Q}_{k,h}(g,3g-3)S_{k,h} = \widetilde{M}_{k,h}\end{equation} where $\widetilde{M}_{k,h}$ is obtained by $M_{k,h}$ by adding $h(h+3)/2$ zero rows, and by replacing the row corresponding to $\beta^{k-h}(4\gamma)^h$ with a zero row. Analogously, if we define $\widehat{Q}_{k,h}$ to be $Q_{k+h}$ with the last rows except the one corresponding to $\beta^{k-h}\gamma^h$ replaced with $h(h+3)/2$ independent vectors of $\text{Ker}S_{k,h}^T$, we get $$\widehat{Q}_{k,h}(g,3g-3)S_{k,h} = \widehat{M}_{k,h} $$ where $\widehat{M}_{k,h}$ is obtained by $M_{k,h}$ by adding $h(h+3)/2$ zero rows.

Thus, information on $\widetilde{Q}_{k,h}$ and $\widehat{Q}_{k,h}$ would lead to the solution of the matrix problem. A particularly good result would follow if $\widetilde{Q}_{k,h}$ were invertible. 

\begin{lemma}\label{lemmadet}
	Let $p_k(X,Z)$ be the polynomials defined in \eqref{polydef}.
	\begin{enumerate} \item[1.] Let $$ B_{k,h}(X,Z) = (Z^jp_{k+h-i}(X-n,Z-m))_{0\le j \le i \le h}^{0\le n\le m\le h}.$$ 
		Then we have $\emph{det}\widetilde{Q}_{k,h}(g,3g-3) = c_{k,h}\emph{det}B_{k,h}(g,3g-2-k)$ for some nonzero constants $c_{k,h}$.

		\item[2.]  $\emph{det}\widehat{Q}_{k,h}(g,3g-3)$ is the determinant of a first minor of $B(g,3g-2-k)$.
		
		\item[3.] Define \begin{equation}\label{ourdet}W_{k,h}(X,Z) \coloneqq  \emph{det}(p_k(X-h+i,Z+j))_{0\le i \le h}^{0\le j \le h}.\end{equation} Then, up to a nonzero constant, $$\emph{det}(\widetilde{Q}_{k,h}(g,3g-3))=\prod_{i=0}^hW_{k,i}(g,3g-k-i-2).$$
		
	\end{enumerate}
\end{lemma}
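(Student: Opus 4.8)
The three statements of Lemma~\ref{lemmadet} are linked: part 3 will follow from parts 1 and 2 once we understand how $\det B_{k,h}$ factors, and parts 1 and 2 are essentially a careful bookkeeping of the construction of $\widetilde Q_{k,h}$ and $\widehat Q_{k,h}$. So I would organize the proof around computing $\det B_{k,h}(X,Z)$ and then transferring that information back to $\widetilde Q_{k,h}$.

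\medskip
\noindent\textbf{Step 1: Choose the right basis of $\mathrm{Ker}(S_{k,h}^T)$.} By Lemma~\ref{kerneldimension}, a basis of $\mathrm{Ker}(S_{k,h}^T)$ is given by the coefficient vectors of the polynomials $Z^j p_{k+h-i}(X,Z)$ for $0\le j\le i\le h$. I would take \emph{these} as the vectors $v_1,\dots,v_{(h+1)(h+2)/2}$ replacing the last rows of $Q_{k+h}(g,3g-3)$ in the definition of $\widetilde Q_{k,h}$. The key structural input is the factorization \eqref{factorization}, $M_{k+h}=Q_{k+h}S_{k+h}$, together with Corollary~\ref{polyev}: the product of a row vector with $M_{k+h}$ is the evaluation of the associated polynomial $p_v(X,Z)$ at the lattice points $(X,Z)=(n_2,a_2+n_2)$. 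Since the rows of $Q_{k+h}$ were designed so that $(Q_{k+h})_{a,n}^{\bullet,\bullet} S_{k+h}$ evaluates the monomial $X^n Z^{k+h-a-n}$ at those points (this is essentially what \eqref{qmatrix} encodes — $Q_{k+h}$ is the change of basis from the monomial basis to the ``binomial'' basis, evaluated at $(g,3g-3)$), replacing the bottom rows by the $v_i$ amounts to replacing the corresponding monomials by the polynomials $Z^j p_{k+h-i}(X,Z)$. Concretely, $\det \widetilde Q_{k,h}(g,3g-3)$ is, up to the nonzero determinant of the unipotent/triangular part of $Q_{k+h}$, the determinant of the matrix whose rows are the coefficient expansions of $Z^j p_{k+h-i}(X,Z)$ in the binomial basis, evaluated at the appropriate shifted points — which, after re-expressing the evaluation-of-a-shifted-binomial as a shift of the argument, is precisely $\det B_{k,h}(g,3g-2-k)$. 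This gives part 1, with $c_{k,h}$ the (manifestly nonzero, since triangular with $\pm1$ diagonal) determinant of the untouched block of $Q_{k+h}$.

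\medskip
\noindent\textbf{Step 2: Part 2 and the factorization of $\det B_{k,h}$.} Part 2 is the same computation with one fewer substituted row (the row for $\beta^{k-h}\gamma^h$ is kept), so $\det\widehat Q_{k,h}(g,3g-3)$ is literally a first minor of $B_{k,h}(g,3g-2-k)$; no new idea is needed. The substance is part 3: I would show $\det B_{k,h}(X,Z)=\prod_{i=0}^h W_{k,i}(X,Z-i+?)$ up to a constant by a block/row-reduction argument on $B_{k,h}$. The matrix $B_{k,h}$ has rows indexed by $(i,j)$ with $0\le j\le i\le h$ and columns by $(m,n)$ with $0\le n\le m\le h$; grouping rows by the value of $i$ and columns by the value of $m$, the block structure together with the discrete-heat relation \eqref{discreteheat} and the difference operators $D_Z,D_{X,-1}$ from Remark~\ref{pkgenerate} should let me peel off one factor $W_{k,i}$ at a time: the polynomials $Z^j p_{k+h-i}(X,Z)$ for fixed $i$ and $j=0,\dots,i$ span the same space as suitable finite differences, and the Vandermonde-type structure in $j$ versus $n$ collapses to the determinant $W_{k,\cdot}$ of shifted copies of a single $p_k$. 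The precise matching of arguments ($3g-k-i-2$ in statement 3 versus $3g-2-k$ in statement 1) will come out of tracking the shifts $(X-n,Z-m)$ through this reduction.

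\medskip
\noindent\textbf{Main obstacle.} The genuinely delicate point is Step 2's factorization $\det B_{k,h}=\text{const}\cdot\prod_i W_{k,i}$: it is a determinant identity for a structured matrix built from a single polynomial $p_k$ and its shifts, and making the row operations rigorous — in particular checking that each peeling step is invertible over the ground ring and produces exactly the claimed factor with the claimed argument shift, and that the accumulated constant is nonzero — is where the real work lies. I expect to lean heavily on the characterization of $p_k$ as the unique polynomial solution of the discrete heat equation \eqref{discreteheat} with $p_k(0,Z)=\binom{Z}{k}$ (Remark~\ref{heatkernel}), since that is what makes shifts of $p_k$ behave coherently under $D_Z$ and $D_{X,-1}$. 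Parts 1 and 2, by contrast, are bookkeeping once Step 1's basis choice is fixed.
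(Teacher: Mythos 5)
Your proposal follows essentially the same route as the paper: the same choice of kernel basis $Z^jp_{k+h-i}(X,Z)$, the same reduction of $\det\widetilde{Q}_{k,h}(g,3g-3)$ to the bottom-right block of $\widetilde{Q}_{k,h}(Q_{k+h})^{-1}$ (the paper pins down your ``shift bookkeeping'' by column operations on \eqref{inverseqmatrix} via the identity $\sum_{i=0}^a(-1)^i\frac{a!}{(a-i)!}h_{b-i}(x,\dots,x-i)=(x-a)^b$, which turns the last columns into monomial evaluations at shifted points and identifies the block with $B_{k,h}(g,3g-2-k)$), and Point 2 by the same one-row modification. The ``main obstacle'' you flag, the factorization $\det B_{k,h}=c\prod_i W_{k,i}$, is dispatched in the paper without the heat equation or the uniqueness characterization: it follows from repeated use of the single recurrence $p_k(X,Z+1)-p_k(X,Z)=p_{k-1}(X,Z)$ (i.e.\ $D_Zp_k=p_{k-1}$, which is available from Remark~\ref{pkgenerate}) as row/column operations on $B_{k,h}$.
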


\begin{proof}
	Let us choose the basis of $\text{Ker}(S_{k,h}^T)$ given by the polynomials $Z^jp_{k+h-i}(X,Z)$ with $0\le j  \le i \le h$ and let us construct $Q_{k,h}$ accordingly.
	
	We consider the matrix $C_{k,h}=\widetilde{Q}_{k,h}(Q_{k+h})^{-1}$, where $Q_{k+h}$ is the matrix defined in \eqref{qmatrix}. This is a block upper-triangular matrix with a square block of size $(h+1)(h+2)/2$ at the bottom-right corner and with all other blocks of size $1$, each containing a $1$; thus our determinant is equal to the determinant of the bottom-right block (up to a sign: it is easy to see that this sign is $+$, since the bottom-right minor of size $(h+1)(h+2)/2$ of $Q_{k+h}$ has determinant $1$).
	
	To compute it we use (\ref{inverseqmatrix}) along with the identity $$\sum_{i=0}^a(-1)^i\frac{a!}{(a-i)!}h_{b-i}(x,\ldots, x-i) = (x-a)^b$$ which is valid for all integers $a$ and $b$: with this we can take appropriate linear combinations of the last $(h+1)(h+2)/2$ columns of $(Q_{k+h})^{-1}$ to obtain a matrix whose entry $((a,n),(b,m))$ is $(y-b-m+1)^{k+h-a-n}(x-m)^n$ in the last $(h+1)(h+2)/2$ columns. This amounts to performing column operations on $\widetilde{Q}_{k,h}(Q_{k+h})^{-1}$, so this procedure does not change its determinant up to multiplying by nonzero constants. 
	
	Recalling that the last $(h+2)(h+1)/2$ rows of $\widetilde{Q}_{k,h}$ consist of the coefficients of $Z^jp_{k+h-i}(X,Z)$, the determinant is then equal to the one in the statement and this proves Point 1. Point 2 is shown similarly when the row which is eliminated corresponds to the polynomial among $(Z^jp_{k+h-i})_{i,j}$ which is not considered, and the column is the index of $Q_{k+h}$ corresponding to the class $\beta^{k-h}(4\gamma)^h$. To show Point 3, use repeatedly the identity $p_k(X,Z+1)-p_k(X,Z) = p_{k-1}(X,Z)$ in the matrix $B_{k,h}$.
\end{proof}

Point 2. of the previous Lemma immediately yields the following.

\begin{corollary}\label{isalsoinvertible}
	If $\widetilde{Q}_{k,h}(g,3g-3)$ is invertible, then there exists a vector in $\emph{Ker}S^T_{k,h}$ which can be discarded to give an invertible $\widehat{Q}_{k,h}(g,3g-3)$.
\end{corollary}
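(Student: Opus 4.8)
\textbf{Proof plan for Corollary~\ref{isalsoinvertible}.}
The plan is to deduce the statement from Lemma~\ref{lemmadet} (Points~1 and~2) by a single cofactor expansion. First I would set up the bookkeeping explicitly. Fix the basis $\{Z^jp_{k+h-i}(X,Z):0\le j\le i\le h\}$ of $\text{Ker}\,S_{k,h}^T$ and build $\widetilde{Q}_{k,h}$ accordingly; as in the proof of Lemma~\ref{lemmadet}, the column operations applied to $\widetilde{Q}_{k,h}(Q_{k+h})^{-1}$ reduce $\det\widetilde{Q}_{k,h}(g,3g-3)$, up to the nonzero constant $c_{k,h}$, to $\det B_{k,h}(g,3g-2-k)$, where $B_{k,h}$ is the square matrix of Point~1, with rows labelled by the basis vectors $Z^jp_{k+h-i}$ and columns by the shift pairs. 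Passing to $\widehat{Q}_{k,h}$ amounts to restoring, in the slot of one of those kernel rows, the original row of $Q_{k+h}$ indexed by $\beta^{k-h}(4\gamma)^h$; under the very same column operations this row becomes concentrated in a single entry $1$ in the distinguished column $c$ attached to $\beta^{k-h}(4\gamma)^h$ (here one uses, exactly as in the proof of Lemma~\ref{lemmadet}, that the relevant bottom-right minor of $Q_{k+h}$ has determinant $1$). Hence, if we discard the basis vector $v_r=Z^{j_r}p_{k+h-i_r}$, expanding the resulting bottom-right block along that row gives
\[
\det\widehat{Q}_{k,h}(g,3g-3)=\pm\,\mu_{r,c},
\]
where $\mu_{r,c}$ is the first minor of $B_{k,h}(g,3g-2-k)$ obtained by deleting row $r$ and column $c$. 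This is precisely Point~2 of Lemma~\ref{lemmadet}, now with the discarded kernel vector identified with the deleted row.

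With this identification in place the conclusion is immediate. Assuming $\widetilde{Q}_{k,h}(g,3g-3)$ is invertible, Point~1 of Lemma~\ref{lemmadet} and $c_{k,h}\neq 0$ give $\det B_{k,h}(g,3g-2-k)\neq 0$. Laplace-expanding this nonzero determinant along the column $c$,
\[
0\ \neq\ \det B_{k,h}(g,3g-2-k)\ =\ \sum_{r}(-1)^{r+c}\,\bigl(B_{k,h}(g,3g-2-k)\bigr)_{r,c}\,\mu_{r,c},
\]
so $\mu_{r,c}\neq 0$ for at least one row $r$. Discarding the corresponding kernel vector $v_r$ then yields $\det\widehat{Q}_{k,h}(g,3g-3)=\pm\mu_{r,c}\neq 0$, i.e.\ an invertible $\widehat{Q}_{k,h}(g,3g-3)$, as asserted.

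I do not expect a genuine obstacle here; the argument is essentially a one-line cofactor expansion. The only point that needs care, and the one I would write out first, is the combinatorial matching between ``a vector of $\text{Ker}\,S_{k,h}^T$ that can be discarded'' and ``a row of $B_{k,h}$ that can be deleted'', together with pinning down the distinguished column $c$ corresponding to $\beta^{k-h}(4\gamma)^h$. Both are already implicit in the proof of Lemma~\ref{lemmadet}, so once they are spelled out there, nothing further is required.
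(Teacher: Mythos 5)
Your proposal is correct and follows the same route as the paper: the paper treats the corollary as an immediate consequence of Lemma~\ref{lemmadet}, Points 1 and 2, and your write-up simply makes explicit the cofactor expansion of $\det B_{k,h}(g,3g-2-k)$ along the column attached to $\beta^{k-h}(4\gamma)^h$ that the authors leave implicit. The identification of the discarded kernel basis vector with a deleted row of $B_{k,h}$, and of $\det\widehat{Q}_{k,h}$ with the corresponding first minor (up to a nonzero constant), is exactly the content of Point~2, so nothing further is needed.
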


\begin{theorem}\label{ifinvertible}
	Assume $\widetilde{Q}_{k,h}(g,3g-3)$ is invertible. Then there exists a unique solution to Equation \eqref{general}.
\end{theorem}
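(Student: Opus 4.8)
The plan is to turn the statement into a rank computation for $M_{k,h}$, using the factorization \eqref{tildeq} and its analogue for $\widehat{Q}_{k,h}$ together with Lemma \ref{kerneldimension} and Corollary \ref{isalsoinvertible}. Recall from the discussion preceding Conjecture \ref{P=Wgamma} that, for the top-defect part of \eqref{general} in the redundancy range, the existence and uniqueness of a defect-homogeneous $F$ is equivalent to the two assertions: (i) $\text{Ker}(M_{k,h}^T)$ is one-dimensional, and (ii) its generator has a nonzero entry in the coordinate attached to $\beta^{k-h}(4\gamma)^h$, i.e.\ in the row of $M_{k,h}$ indexed by $(n_1,a_1)=(h,k)$. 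So it suffices to prove (i) and (ii) under the hypothesis that $\widetilde{Q}_{k,h}(g,3g-3)$ is invertible.

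The main input is the rank of $S_{k,h}$. By Lemma \ref{kerneldimension}, $\dim\text{Ker}(S_{k,h}^T)=(h+1)(h+2)/2$; on the other hand a direct count of its index set (cf.\ \eqref{rownumber}) shows that $S_{k,h}$ has $r_{k,h}+h(h+3)/2$ rows, so
\[
\operatorname{rank}(S_{k,h})=r_{k,h}+\tfrac{h(h+3)}{2}-\tfrac{(h+1)(h+2)}{2}=r_{k,h}-1 .
\]
For (i): since $\widetilde{Q}_{k,h}(g,3g-3)$ is invertible, Corollary \ref{isalsoinvertible} lets us choose a basis of $\text{Ker}(S_{k,h}^T)$ for which $\widehat{Q}_{k,h}(g,3g-3)$ is invertible as well; then $\widehat{M}_{k,h}=\widehat{Q}_{k,h}(g,3g-3)S_{k,h}$ has the same rank as $S_{k,h}$, and since $\widehat{M}_{k,h}$ is $M_{k,h}$ with zero rows appended, $\operatorname{rank}(M_{k,h})=r_{k,h}-1$. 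As $M_{k,h}$ has $r_{k,h}$ rows, $\dim\text{Ker}(M_{k,h}^T)=1$.

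For (ii): let $M'$ denote $M_{k,h}$ with the $\beta^{k-h}(4\gamma)^h$-row deleted. By construction $\widetilde{M}_{k,h}=\widetilde{Q}_{k,h}(g,3g-3)S_{k,h}$ equals $M_{k,h}$ with that row zeroed and with extra zero rows appended, so its nonzero rows are among the $r_{k,h}-1$ rows of $M'$; since $\widetilde{Q}_{k,h}(g,3g-3)$ is invertible, $\operatorname{rank}(\widetilde{M}_{k,h})=\operatorname{rank}(S_{k,h})=r_{k,h}-1$, whence those $r_{k,h}-1$ rows are linearly independent and $\text{Ker}((M')^{T})=0$. If the generator $w$ of $\text{Ker}(M_{k,h}^T)$ had a vanishing $\beta^{k-h}(4\gamma)^h$-entry, its restriction to the rows of $M'$ would be a nonzero vector in $\text{Ker}((M')^{T})$, a contradiction; hence (ii) holds, and by the equivalence recalled above \eqref{general} has a unique defect-homogeneous solution.

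Once Lemma \ref{kerneldimension} and Corollary \ref{isalsoinvertible} are in hand, the argument above is essentially bookkeeping; the step needing care is the row count of $S_{k,h}$, because it is precisely the identity $\operatorname{rank}(S_{k,h})=r_{k,h}-1$ that pins $\dim\text{Ker}(M_{k,h}^T)$ to $1$ rather than to $0$ (the latter would mean no solution exists). The real difficulty is therefore displaced into the hypothesis: for concrete $(k,h)$ one must verify that $\widetilde{Q}_{k,h}(g,3g-3)$ is invertible, equivalently (by Lemma \ref{lemmadet}) that $\prod_{i=0}^{h}W_{k,i}(g,3g-k-i-2)\neq 0$; this is carried out for $h=1$ in Proposition \ref{h=1}.
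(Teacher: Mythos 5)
Your argument is correct and follows essentially the same route as the paper: it rests on the factorizations $\widetilde{M}_{k,h}=\widetilde{Q}_{k,h}S_{k,h}$ and $\widehat{M}_{k,h}=\widehat{Q}_{k,h}S_{k,h}$, Lemma \ref{kerneldimension} for $\dim\ker S_{k,h}^T$, and Corollary \ref{isalsoinvertible}, just phrased as explicit rank counts (rank $S_{k,h}=r_{k,h}-1$) rather than the paper's comparison of zero rows in $\widetilde{M}_{k,h}$ versus $\widehat{M}_{k,h}$. Your separate verification that the kernel generator has nonzero $\beta^{k-h}(4\gamma)^h$-entry is a harmless reorganization of the paper's existence step and uses the same ingredients.
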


\begin{proof}
	If $\widetilde{Q}_{k,h}$ is invertible, then by Corollary \ref{isalsoinvertible} we can choose a $\widehat{Q}_{k,h}$ that is also invertible. This proves that $$\text{rk}\widetilde{M}_{k,h} = \text{rk}\widehat{M}_{k,h}$$ since both are equal to $\text{rk}S_{k,h}$. Now since $\widehat{M}_{k,h}$ is obtained by $\widetilde{M}_{k,h}$ by replacing a zero row with the row corresponding to $\beta^{k-h}\gamma^h$, we deduce that such row must be a linear combination of the others, thus giving a solution to \eqref{general}.
	
	Now since $\text{Ker}S_{k,h}^T$ has dimension $(h+2)(h+1)/2$, and since $\widetilde{M}_{k,h}$ has exactly $(h+2)(h+1)/2$ zero rows, it follows that $\text{Ker}M_{k,h}^T$ must be one-dimensional, therefore the solution to \eqref{general} is unique.
\end{proof}

Extensive numerical computations has brought us to state the following.

\begin{conjecture}\label{whatwewant}
	In the redundancy range, $W_{k,h}(g,3g-k-h-2)>0.$
\end{conjecture}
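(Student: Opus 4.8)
The plan is to transform $W_{k,h}(g,3g-k-h-2)$ into a Toeplitz determinant attached to a P\'olya frequency sequence, and then to read off its sign from the theory of totally positive matrices. First I would pass from $t$ to $s=t/(1+t)$ in the residue defining $p_k$: since $1+t=(1-s)^{-1}$, $1+2t=(1+s)/(1-s)$ and $\D t/t^{k+1}=(1-s)^{k-1}\,\D s/s^{k+1}$, this gives the closed form $p_k(X,Z)=[s^k](1+s)^{X}(1-s)^{k-1-Z+X}$. Substituting $X=g-h+i$, $Z=3g-k-h-2+j$ and setting $\alpha_0=g-h$, $\gamma_0=2k+1-2g$, the $(i,j)$--entry of the matrix defining $W_{k,h}(g,3g-k-h-2)$ becomes $[s^k](1+s)^{\alpha_0+i}(1-s)^{\gamma_0+i-j}$, which I would then split as $[s^k]\bigl((1-s^2)^{i}\cdot(1+s)^{\alpha_0}(1-s)^{\gamma_0-j}\bigr)$: a moment matrix with row-- and column--dependence separated.

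Next I would apply the Andr\'eief (Heine) identity, turning $W_{k,h}$ into a multi--contour integral of the product of two companion determinants -- a Vandermonde in the variables $1-s_\ell^2$ and a Vandermonde in the variables $(1-s_\ell)^{-1}$. Both of these factor explicitly, and after collecting the factors $\prod_{\ell<\ell'}(s_\ell^2-s_{\ell'}^2)$, $\prod_{\ell<\ell'}(s_{\ell'}-s_\ell)$ and $\prod_\ell(1-s_\ell)^{-h}$, the integrand becomes, up to the sign $(-1)^{\binom{h+1}{2}}$, the honest product $\det(s_\ell^{j})\det(s_\ell^{2j})$ times $\prod_\ell w(s_\ell)$ with $w(s)=(1+s)^{g-h}(1-s)^{2k+1-2g-h}$. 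Re--collapsing via the reverse of Andr\'eief (expand the two Vandermondes, resum over the two symmetric groups) should then yield the square determinant
\[
W_{k,h}(g,3g-k-h-2)=\det\bigl(w_{k-h+a-2b}\bigr)_{a,b=0}^{h},\qquad w_m:=[s^m]w(s)\ \ (\text{and }w_m:=0\text{ for }m<0),
\]
and as a consistency check $h=0$ gives Theorem~\ref{topperv} and $h=1$ gives Proposition~\ref{h=1}.

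The point of the reduction is positivity. In the redundancy range $g\ge k+1$, so $A:=g-h\ge 1$ and $B:=2g+h-2k-1\ge h+1\ge 1$, and $w(s)=(1+s)^{A}(1-s)^{-B}$ is exactly the shape of the generating function of a one--sided P\'olya frequency sequence (Aissen--Schoenberg--Whitney, Edrei--Thoma). Hence the bi--infinite Toeplitz matrix $T=(w_{i-j})_{i,j\in\Z}$ is totally positive, and $\det(w_{k-h+a-2b})_{a,b=0}^{h}$ is its minor on the increasing rows $\{0,1,\dots,h\}$ and increasing columns $\{h-k,h-k+2,\dots,3h-k\}$; therefore $W_{k,h}(g,3g-k-h-2)\ge 0$ already under $g\ge k+1$ alone.

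The hard part is upgrading $\ge 0$ to $>0$. Here I would invoke the Lindstr\"om--Gessel--Viennot description of minors of $T$ -- equivalently, the planar network realizing $(1+s)^{A}(1-s)^{-B}$, built from $A$ single--use up--steps and $B$ unbounded up--lines, which carries exactly $A+B$ independent routes. The minor is strictly positive precisely when the family of vertex--disjoint lattice paths joining source $a$ to sink $a$ exists, which happens iff it fits within those $A+B$ routes (i.e. $h+1\le A+B=3g-2k-1$, implied by $g\ge k+1$) and each individual path is routable (i.e. $k-h-a\ge 0$ for $0\le a\le h$). The real obstacle will be to show that this last condition is forced by a sufficiently sharp reading of the hypothesis $3g\ge 3k+h+1$ together with the mismatch of shift sizes -- step $1$ in the row index versus step $2$ in the column index -- and to treat the degenerate boundary cases where the path family collapses (for instance $h=k$, where in fact $W_{k,k}\equiv 0$, and the smallest instances, already handled by the direct arguments of Section~4). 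Pinning down the non--crossing path criterion for this non--generic P\'olya frequency sequence and matching it against the precise hypotheses is, I expect, where the essential work lies.
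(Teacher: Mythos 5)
First, be aware that the paper does not prove this statement: it is stated as Conjecture \ref{whatwewant}, supported by numerical evidence, with only the case $h=1$ proved (Proposition \ref{h=1}, via explicit three-term identities among the $p_k$) and the range $g\ge k+h+2$ deferred to forthcoming work (Remark \ref{whatwehave}). So the question is whether your outline is itself a proof, and it is not: the total-positivity input (Aissen--Schoenberg--Whitney for $w(s)=(1+s)^{A}(1-s)^{-B}$) only yields $W\ge 0$, and the upgrade to strict positivity --- which is the entire content of the conjecture --- is exactly the Lindstr\"om--Gessel--Viennot non-vanishing criterion that you explicitly leave open (``where the essential work lies''). A reduction to a Toeplitz minor of a P\'olya frequency sequence is an attractive strategy, but as written the decisive step is missing.

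There is also a more basic problem: you take \eqref{ourdet} at face value, and with that literal reading your own Toeplitz formula shows $W_{k,k}\equiv 0$ (already $W_{1,1}\equiv 0$, since $p_1(X,Z)=Z$ does not depend on $X$), so the conjectured strict inequality would simply be false at $h=k$ inside the redundancy range; this is not a ``degenerate boundary case'' to be patched but a sign that the printed matrix is not the intended one. Indeed, the paper's own $h=1$ computation manipulates $W_{k,1}$ into $p_k(X-1,Z)p_{k-1}(X-1,Z-2)-p_{k+1}(X-1,Z)p_{k-2}(X-1,Z-2)$ and uses $W_{1,1}(X,Z)=Z$, i.e.\ the determinant actually produced by Lemma \ref{lemmadet} from $B_{k,h}$ has degree-shifted entries ($p_{k+h-i}$, arguments shifted by $-2$), not the fixed-degree matrix of \eqref{ourdet}. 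Your Andr\'eief/Toeplitz identity does seem algebraically consistent for the literal matrix (it reproduces $p_k(g,3g-k-2)$ at $h=0$ and $3g-5$ at $(k,h)=(2,1)$), but it then computes the wrong object: at $(k,h)=(1,1)$ it gives $0$ while the quantity the paper works with equals $3g-4$. So the reduction would first have to be redone for the correct determinant, and even then the strict positivity of the resulting minor --- the conjecture itself --- remains unproved in your proposal.
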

Conjecture \ref{whatwewant} would imply the existence and uniqueness of the lowest defect part of the solution to Equation (\ref{general}) in the redundancy range.
Here we provide a proof for $h=1$.

\begin{proposition}\label{h=1}
	If $g\ge k+1$, then $W_{k,1}(g,3g-k-3)>0.$
\end{proposition}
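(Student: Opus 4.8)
The plan is to collapse the $2\times 2$ determinant $W_{k,1}(g,3g-k-3)$ to a Tur\'an-type (log-concavity) inequality for the coefficient sequence of an explicit real-rooted polynomial, and then to quote Newton's inequalities. The first step is to simplify the determinant. Using the identity $p_k(X,Z+1)-p_k(X,Z)=p_{k-1}(X,Z)$ already used in the proof of Lemma~\ref{lemmadet}, the column operation $C_1\mapsto C_1-C_0$ turns $W_{k,1}(X,Z)=\det\bigl(p_k(X-1+i,Z+j)\bigr)_{0\le i,j\le 1}$ into
\[
W_{k,1}(X,Z)=p_k(X-1,Z)\,p_{k-1}(X,Z)-p_{k-1}(X-1,Z)\,p_k(X,Z).
\]
Specializing $(X,Z)=(g,3g-k-3)$ and reading off generating functions, $p_j(g-1,3g-k-3)=[t^j]\,\Phi(t)$ and $p_j(g,3g-k-3)=[t^j]\,\Psi(t)$ with $\Phi(t)=(1+t)^{g-k-1}(1+2t)^{g-1}$ and $\Psi(t)=(1+t)^{g-k-3}(1+2t)^{g}$, so that $W_{k,1}(g,3g-k-3)$ is the Casoratian $\Phi_k\Psi_{k-1}-\Phi_{k-1}\Psi_k$. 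Writing $\Xi(t)=(1+t)^{g-k-3}(1+2t)^{g-1}$ one has $\Phi=(1+t)^2\,\Xi$ and $\Psi=(1+2t)\,\Xi$.

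\textbf{Collapse to one generating function.} Substituting $\Psi_j=\Phi_j-\Xi_{j-2}$ and then $\Phi_j=\Xi_j+2\Xi_{j-1}+\Xi_{j-2}$ into the Casoratian, the cross-terms telescope and one is left with
\[
W_{k,1}(g,3g-k-3)=\Xi_{k-1}\Xi_{k-2}+2\Xi_{k-2}^2-\Xi_k\Xi_{k-3}-2\Xi_{k-1}\Xi_{k-3}
=2\bigl(\Xi_{k-2}^2-\Xi_{k-1}\Xi_{k-3}\bigr)+\bigl(\Xi_{k-1}\Xi_{k-2}-\Xi_k\Xi_{k-3}\bigr).
\]
Equivalently, expanding $(1+s)^2(1+2t)-(1+t)^2(1+2s)=(s-t)(s+t+2st)$ one obtains the closed form $W_{k,1}(g,3g-k-3)=[s^kt^{k-1}]\bigl(\Xi(s)\Xi(t)(s-t)(s+t+2st)\bigr)$, which may be more convenient to manipulate.

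\textbf{Positivity via log-concavity.} When $g\ge k+3$ the polynomial $\Xi(t)=(1+t)^{g-k-3}(1+2t)^{g-1}$ has only the negative real roots $-1$ and $-1/2$, so its coefficients $\Xi_0,\dots,\Xi_{2g-k-4}$ are strictly positive and, by Newton's inequalities, strictly log-concave: $\Xi_j^2>\Xi_{j-1}\Xi_{j+1}$ in the relevant range, hence the ratios $\Xi_j/\Xi_{j-1}$ strictly decrease. This gives $\Xi_{k-2}^2-\Xi_{k-1}\Xi_{k-3}>0$ at once, and $\Xi_{k-1}\Xi_{k-2}-\Xi_k\Xi_{k-3}>0$ because $\Xi_{k-2}/\Xi_{k-3}>\Xi_{k-1}/\Xi_{k-2}>\Xi_k/\Xi_{k-1}$; both summands being positive, $W_{k,1}(g,3g-k-3)>0$. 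The small values $k\le 2$, where $k-2$ or $k-3$ is negative and the expression degenerates, are disposed of by an explicit evaluation.

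\textbf{The boundary cases and the main obstacle.} The range $g\in\{k+1,k+2\}$ is not covered above, since then the exponent $g-k-3$ is negative and $\Xi,\Psi$ are no longer polynomials with nonnegative coefficients, so the real-rootedness argument breaks down. I would treat these two cases by direct computation: here $\Phi$ and $\Psi$ take simple closed forms (for instance $\Phi=(1+2t)^{k}$ and $\Psi=(1+t)^{-2}(1+2t)^{k+1}$ when $g=k+1$), so expanding $\Psi$ by the binomial series for $(1+t)^{-1}$ or $(1+t)^{-2}$ reduces the positivity of the Casoratian to an elementary inequality in $k$. I expect these two boundary evaluations, rather than the log-concave bulk, to be the delicate point of the proof, since no single real-rootedness statement seems to cover the whole range $g\ge k+1$ uniformly.
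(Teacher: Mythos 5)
Your column operation and the identity $p_k(X,Z+1)-p_k(X,Z)=p_{k-1}(X,Z)$ are fine, and the Newton/log-concavity argument is correct as far as it goes, but it only goes as far as the easy bulk. Newton's inequalities require $\Xi(t)=(1+t)^{g-k-3}(1+2t)^{g-1}$ to be a real-rooted polynomial with positive coefficients, i.e. $g\ge k+3$ (and $k\ge 3$ so that all four indices $k-3,\dots,k$ are in range). The two boundary families $g=k+1$ and $g=k+2$ -- infinitely many cases, one for each $k$ -- are left as a plan ("direct computation \dots an elementary inequality in $k$"), and you yourself flag them as the delicate point. But $g\ge k+1$ is precisely the claim, and the paper's proof is organized so as to reach the boundary uniformly: it rewrites $p_k(X,Z)=\res_{t=0}\frac{\D t}{t^{k+1}}(1+t)^X(1-t)^{X-Z+k-1}$, expresses $W_{k,1}$ as a double residue against the Vandermonde factor $t_0^2-t_1^2$, splits $W_{k,1}=W^0_{k,1}-W^1_{k,1}$, proves the contiguity recursion $(k-1)W^0_{k,1}-(k+1)W^1_{k,1}=2(X-1)W_{k-1,1}(X-1,Z-2)$, and inducts on $k$ to get positivity on the whole domain $\{X\ge k,\ Z\ge X+k-1\}$, which contains $(g,3g-k-3)$ exactly when $g\ge k+1$. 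Without a worked-out argument in the range $g\in\{k+1,k+2\}$, where your generating functions are no longer polynomials with nonnegative coefficients, the proposition is not proved.

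There is a second, more structural problem: the quantity you bound is not the one the paper estimates, and the discrepancy is fatal at $k=1$. Reading \eqref{ourdet} literally, your reduction gives the Casoratian $\Phi_k\Psi_{k-1}-\Phi_{k-1}\Psi_k$; since $p_1(X,Z)=Z$ does not depend on $X$, this determinant vanishes identically for $k=1$, so the strict positivity you propose to check "by explicit evaluation" for $k\le 2$ is false there under your reading. The determinant the paper actually manipulates in its proof (the one arising from the reduction of $B_{k,h}$ in Lemma \ref{lemmadet}) is $p_k(X-1,Z)\,p_{k-1}(X-1,Z-2)-p_{k+1}(X-1,Z)\,p_{k-2}(X-1,Z-2)$, which equals $Z$ for $k=1$ and already differs from your Casoratian at $(k,g)=(2,3)$ (it equals $8$ there, yours equals $4$). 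So either \eqref{ourdet} carries an indexing slip -- in which case your $\Phi,\Psi$ are the wrong generating functions for the matrix problem and the whole computation targets a different determinant -- or, taken literally, the statement fails at $k=1$ and cannot be rescued by evaluation. Either way the proposal does not establish the proposition as it stands; the mismatch has to be resolved, and the boundary cases $g=k+1,k+2$ (which is where the paper's inductive recursion does its real work) have to be proved, before the log-concavity idea can be considered a proof.
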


\begin{proof}
	With a simple change of variables, we can rewrite the polynomials $p_k$ as $$p_k(X,Z) = \res_{t=0}\frac{\D t}{t^{k+1}}(1+t)^X(1-t)^{X-Z+k-1}$$ therefore, we can rewrite the determinants $W_{k,1}(X,Z)$ as
	$$\res_{t_0,t_1=0}\frac{\D t_0\D t_1}{t_0^{k+2}t_1^{k+1}}((1+t_0)(1+t_1))^{X-1}(1-t_0)^{X-Z+k-1}(1-t_1)^{X-Z+k-2}(t_0^2-t_1^2).$$ We decompose it as $$W_{k,1} = W_{k,1}^0-W_{k,1}^1$$ accordingly to the terms of the Vandermonde factor $t_0^2-t_1^2$. Then we have the following
	\begin{lemma}
		For all $k\ge 1$, we have $$(k-1)W_{k,1}^0(X,Z)-(k+1)W_{k,1}^1(X,Z) = 2(X-1)W_{k-1,1}(X-1,Z-2) $$                                                                                                                                                                                                                                                                                     
	\end{lemma}
	
	\begin{proof}[Proof of the lemma]
		We immediately see that $$W_{k,1}^0(X,Z) = p_k(X-1,Z)p_{k-1}(X-1,Z-2), $$  $$W_{k,1}^1(X,Z) = p_{k+1}(X-1,Z)p_{k-2}(X-1,Z-2).$$ Moreover we have the easily proven formulas $$ (k+1)p_{k+1}(X,Z) = X p_k(X-1,Z-2)+(Z-X-k)p_k(X,Z),$$ $$p_k(X-1,Z) = p_{k-1}(X-2,Z-2) + p_k(X-2,Z-1), $$ $$ p_k(X,Z) = p_k(X,Z-1)+p_{k-1}(X,Z-1). $$ 
		Applying in order the first, second and third formula to the statement, we get the result.
	\end{proof}

	Now we can prove the Proposition \ref{h=1} by induction on $k$. Assume that we found the domain where $(X-1)W_{k-1,1}(X-1,Z-2)\ge0$, then $$\frac{W_{k,1}^1(X,Z)}{W_{k,1}^0(X,Z)} \le \frac{k-1}{k+1}<1$$ if $W_{k,1}^0(X,Z) >0$, thus yielding the domain for $W_{k,1}$ by the Lemma.

	From the expression of $W_{k,1}^0$ given in the proof of the Lemma, we easily find $$W_{k,1}^0(X,Z) > 0 \text{ for } \begin{cases}
	X\ge k, \\
	Z \ge X+k-1. \end{cases}$$
	
	Now since $W_{1,1}(X,Z) = Z$, we find $$W_{k,1}(X,Z)> 0 \text{ for } \begin{cases} X\ge k, \\
	Z\ge X+k-1.\end{cases}$$ Finally, if $g\ge k+1$, we have $3g-k-3 \ge g + k -1$ and the proof is complete.
\end{proof}

\begin{remark}\label{whatwehave}
We also managed to prove that $W_{k,h}(g,3g-k-h-2)>0$ for $g\ge k+h+2$. The proof will be provided in a forthcoming work.
\end{remark}

\begin{remark}
 Notice that, in any case, $W_{k,h}(g,3g-k-h-2)$ is a nonzero polynomial in $g$ with positive leading term. Therefore, existence and uniqueness of the solution to Equation \eqref{general} is assured for $g$ big enough (depending on $k$ and $h$).
\end{remark}

\begin{remark}
	If $g\ge k+1$ is outside the redundancy range, then the matrix of the pairing $(F,P)\mapsto \int_Zu^{3g-3-2k}(\eta F)P$ can still be defined with the same formula (and the same rows and columns range) as in Definition \ref{gammamatrix}: it is indeed easily shown that if $3k+3\le 3g\le 3k+h$, then the extra columns in $M_{k,h}$ are automatically zero. Thus, the solution to Equation (\ref{general}), even outside the redundancy range, is given by an element of $\text{Ker}M_{k,h}^T$ whose entry corresponding to $\beta^{k-h}(4\gamma)^h$ is nonzero.
	
	However, in this range, computer calculations have shown that, although a solution to Equation \eqref{general} still exists, we have $\text{det}\widetilde{Q}_{k,h}=0$ and the solution is never unique at the level of polynomials in $\alpha$, $\beta$, $\gamma$ and $\eta$. By considering the difference of two such solutions, we have noticed that they come from the relations in $H^*(\M)$ described in \cite{hausel-thaddeus2}. Therefore, we conjecture that the solution to Equation \eqref{general} is in any case unique in cohomology.
\end{remark}

\end{document}